\newcommand{\CC}{\mathbf{C}}
\newcommand{\NN}{\mathbf{N}}
\newcommand{\ZZ}{\mathbf{Z}}
\newcommand{\RR}{\mathbf{R}}
\newcommand{\SSS}{\mathbf{S}}
\newcommand{\QQ}{\mathbf{Q}}
\newcommand{\SL}{\mathbf{SL}}
\newcommand{\sG}{\mathscr{G}}
\newcommand{\sY}{\mathscr{Y}}
\newcommand{\sT}{\mathscr{T}}
\newcommand{\parts}{\mathscr{P}}
\newcommand{\cat}{{\upshape CAT($0$)}\xspace}
\newcommand{\se}{\subseteq}
\newcommand{\fhi}{\varphi}
\newcommand{\inv}{^{-1}}
\newcommand{\action}{\curvearrowright}
\newcommand{\hb}{\mathrm{H}_{\mathrm{b}}}
\newcommand{\hbc}{\mathrm{H}_{\mathrm{cb}}}
\DeclareMathOperator{\Aut}{Aut}
\DeclareMathOperator{\Homeo}{Homeo}
\DeclareMathOperator{\Prob}{Prob}
\DeclareMathOperator{\Ends}{Ends}
\DeclareMathOperator{\Br}{Br}
\DeclareMathOperator{\Bund}{Bund}
\DeclareMathOperator{\Con}{Con}
\DeclareMathOperator{\Clo}{Cl}
\DeclareMathOperator{\Ramen}{Ramen}
\DeclareMathOperator{\Fix}{Fix}
\theoremstyle{plain}
\newtheorem{thm}{Theorem}[section]
\newtheorem*{thm*}{Theorem}
\newtheorem{lem}[thm]{Lemma}
\newtheorem{prop}[thm]{Proposition}
\newtheorem{cor}[thm]{Corollary}
\theoremstyle{definition}
\newtheorem*{defn*}{Definition}
\newtheorem{defn}[thm]{Definition}
\newtheorem{example}[thm]{Example}
\newtheorem*{example*}{Example}
\newtheorem{rem}[thm]{Remark}
\newtheorem{prob}[thm]{Problem}
\newtheorem*{rem*}{Remark}
\begin{document}
\title{Group actions on dendrites and curves}
\author[B. Duchesne]{Bruno Duchesne}
\address{Institut Élie Cartan, Université de Lorraine, Nancy, France.}
\email{bruno.duchesne@univ-lorraine.fr}
\thanks{B.D. is supported in part by French Project ANR-14-CE25-0004 GAMME}
\author[N. Monod]{Nicolas Monod}
\address{EPFL, 1015 Lausanne, Switzerland.}
\begin{abstract}
We establish obstructions for groups to act by homeomorphisms on dendrites. For instance, lattices in higher rank simple Lie groups will always fix a point or a pair. The same holds for irreducible lattices in products of connected groups. Further results include a Tits alternative and a description of the topological dynamics.

We briefly discuss to what extent our results hold for more general topological curves.
\end{abstract}
\maketitle

\section{Introduction}
\textbf{1.A --- Dendrites.}
Recall that a \emph{dendrite} is a locally connected continuum without simple closed curves. Other equivalent definitions and some basic facts are recalled in Section~\ref{sec:prelim} below.

A simple example of a dendrite is obtained by compactifying a countable simplicial tree (Section~\ref{sec:further}), but the typical dendrite is much more complicated: certain Julia sets are dendrites~\cite{Bruin-Todd} and Wa\.zewski's universal dendrite~\cite{WazewskiPHD} can be identified with the Berkovich projective line over $\CC_p$~\cite{Hrushovski-Loeser-Poonen}. The homeomorphisms group of some dendrites is enormous (Section~\ref{sec:further}), whilst for others it is trivial~\cite[p.~443]{deGroot-Wille}.

In fact, it was recognized early on~\cite{Whyburn28,Whyburn30} that \emph{any} continuum has a canonical dendrite quotient reflecting its cut-point structure, see~\cite{Bowditch98}, \cite{Papasoglu-Swenson} for the general statement. Bowditch made remarkable use of this quotient~\cite{BowditchAMS}, leading to the solution of the cut-point conjecture for hyperbolic groups~\cite{Bowditch99,Swarup96}. In this application, Bowditch's dendrites retained a decidedly geometric aspect inherited from the hyperbolic group, namely the dynamical convergence property, allowing him to reconstruct a metric tree with an isometric action.

However, in general, continuous actions on dendrites are definitely not geometrisable (see Section~\ref{sec:further}). Nonetheless, the purpose of our work is to establish rigidity results for actions on dendrites in full generality. It turns out that this context of topological dynamics still admits analogues of some results that are known in geometry of negative curvature. We shall call an action on a dendrite \emph{elementary} if it fixes a point or a pair of points, cf.\ the discussion in Section~\ref{sec:elem}.

\begin{thm}\label{thm:higher}
Let $\Gamma$ be a lattice in a simple algebraic group of rank at least two.

Then any $\Gamma$-action on a dendrite is elementary.
\end{thm}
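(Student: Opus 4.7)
The plan is to combine Zimmer amenability of the Furstenberg boundary of the ambient algebraic group $G$, a median construction on probability measures on the dendrite, and higher-rank rigidity. Let $G$ denote the ambient simple algebraic group of rank $\ge 2$, $P \le G$ a minimal parabolic, and $B = G/P$ the Furstenberg boundary. Since the $\Gamma$-action on $B$ is amenable and $\Prob(X)$ is a compact convex $\Gamma$-space in the weak-$*$ topology, Zimmer's amenability principle yields a $\Gamma$-equivariant measurable map $\varphi : B \lra \Prob(X)$.

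I would then exploit the tree-like structure of $X$ via a median construction. For $\mu \in \Prob(X)$, define $m(\mu) = \{x \in X : \mu(C) \le 1/2 \text{ for every component } C \text{ of } X \setminus \{x\}\}$. Using uniqueness of arcs in a dendrite---any three medians in general position would, together with their tripod centre, produce three disjoint components of total $\mu$-mass greater than one---one checks that $m(\mu)$ is non-empty, closed, and always either a single point or a closed arc $[a,b]$; recording the point, respectively the unordered endpoint pair $\{a,b\}$, then defines a $\Homeo(X)$-equivariant Borel map $m : \Prob(X) \to X \sqcup X^{(2)}$. Composition yields a $\Gamma$-equivariant measurable map $\psi = m \circ \varphi : B \to X \sqcup X^{(2)}$.

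The remaining step is to show $\psi$ is essentially constant, whereupon its essential value is a $\Gamma$-invariant point or pair. Since $G$ has rank $\ge 2$, $B$ admits two independent ergodic factorizations through distinct maximal parabolics, and one would like to invoke a Margulis factor theorem or Nevo--Zimmer intermediate factor theorem to conclude that any equivariant map into a $\Homeo(X)$-space as topologically simple as $X \sqcup X^{(2)}$ must descend through a trivial factor. A parallel finish applies Margulis' normal subgroup theorem to the kernel of $\Gamma \to \Homeo(X)$: if the kernel has finite index, the image is a finite group, which acts elementarily via the median of any orbit; if the kernel is finite, one combines $\psi$ with the Tits alternative likely developed earlier in the paper to extract a $\Gamma$-invariant finite subset.

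The main obstacle is this last rigidity step. Classical measurable rigidity for higher-rank lattices is tailored to algebraic targets (flag manifolds, semisimple quotients), whereas $X \sqcup X^{(2)}$ is a potentially wild Borel $\Gamma$-space carrying no algebraic structure. The most plausible workaround is to combine the Tits alternative and the structural description of non-elementary dendrite dynamics (established earlier in the paper) with the normal subgroup theorem: essential non-constancy of $\psi$ would yield a free subgroup of $\Gamma$ acting with proximal contracting dynamics on $X$, which is forbidden once one knows that amenable groups act elementarily on dendrites and that $\Gamma$ has no infinite normal subgroup of infinite index.
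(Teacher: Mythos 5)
Your first two steps track the paper closely: the amenability of the $\Gamma$-action on $B=G/P$ gives a map $B\to\Prob(X)$, and your median construction is essentially the paper's Proposition~\ref{prob}, which produces a $\Homeo(X)$-equivariant Borel map $\Prob(X)\to\parts_{1,2}(X)$ by exactly the tripod/mass argument you sketch (the paper also has to handle the atomic case via Jordan centres, and a third case where the median set is empty and one instead intersects the majority components via Helly's theorem --- your dichotomy ``point or arc'' is not quite exhaustive as stated). But the final rigidity step is a genuine gap, and none of your three proposed finishes can close it. The map $\psi$ is \emph{not} essentially constant in general: for a non-elementary action the paper shows (Theorem~\ref{bmap}) that the boundary map lands in $\Ends(X)$ and sends almost every pair of boundary points to a \emph{distinct} pair of ends, so trying to prove essential constancy is attacking a false intermediate statement. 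The factor-theorem route fails for the reason you yourself identify ($X\sqcup X^{(2)}$ has no algebraic structure). The Tits-alternative route fails because a higher-rank lattice \emph{does} contain non-abelian free subgroups, so ``essential non-constancy yields a free subgroup acting proximally'' is no contradiction whatsoever. And the normal subgroup theorem gives nothing: a finite kernel is perfectly compatible, a priori, with a faithful non-elementary action.

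The paper's actual mechanism is different in kind: it converts non-elementarity into a \emph{nonvanishing degree-two bounded cohomology class}. One defines a canonical bounded alternating Borel cocycle $\omega\colon X^3\to\ell^2(\Lambda(X))$, where $\Lambda(X)$ is the countable double bundle over the branch points, supported on triples not contained in a common arc (Proposition~\ref{prop:omega}); pulling $\omega$ back along the boundary map $B\to X$ and using amenability of $B$ plus double ergodicity with coefficients yields a class in $\hb^2(\Gamma,\ell^2(\Lambda(X)))$ that vanishes only if almost every triple of boundary images lies in an arc, which forces elementarity. One then invokes the Burger--Monod vanishing theorem for $\hb^2$ of higher-rank lattices with unitary coefficients without invariant vectors. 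So the missing idea in your proposal is precisely the cocycle $\omega$ and the passage to bounded cohomology; without some such invariant, the boundary map alone does not produce a contradiction.
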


In the above statement, a \emph{simple algebraic group} refers to $\mathbf{G}(k)$ where $k$ is a local field and $\mathbf{G}$ is a connected almost $k$-simple algebraic group defined over $k$; its \emph{rank} is the $k$-rank of $\mathbf{G}$. Examples include the Lie group $\SL_n(\RR)$, which is of rank $n-1$, or $p$-adic and function field analogues.

\medskip

To highlight one of the differences between general dendrites and trees, we recall that the analogue of Theorem~\ref{thm:higher} for trees or $\RR$-trees is a direct consequence of Kazhdan's property~(T)~\cite{Watatani82}, \cite[6.11]{Harpe-Valette}. In contrast, we are not aware of any possible connection between Kazhdan's property and dendrites.

\begin{prob}\label{prob:T}
Find a Kazhdan group with a non-elementary action on a dendrite.
\end{prob}

Our forthcoming paper~\cite{DM_dendrites_2} on the structural properties of dendrite groups also contains evidence that Kazhdan's property \emph{should not} be an obstruction to actions on dendrites.

\medskip

Our approach is therefore different: we establish a degree two cohomological invariant for actions on dendrites which is a topological version of invariants known in non-positive curvature~\cite{Monod-ShalomCRAS}, \cite{Monod-Shalom1}. In view of the cohomological vanishing results for higher rank lattices proved in~\cite{Burger-Monod1}, \cite{Monod-Shalom1}, the following already accounts for Theorem~\ref{thm:higher}.

\begin{thm}\label{thm:coho-intro}
Let $G$ be a group with a non-elementary action on a dendrite.

There is a canonical unitary representation $V$ of $G$ without invariant vectors and a non-trivial canonical element of the second bounded cohomology $\hb^2(G, V)$.
\end{thm}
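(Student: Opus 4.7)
The plan is to carry out, in the purely topological setting of dendrites, an analogue of the second bounded cohomology construction of Monod--Shalom for groups acting on non-positively curved spaces, with the intrinsic \emph{median} of a dendrite providing the combinatorial backbone. Recall that for any three points $x,y,z$ of a dendrite $D$ there is a unique $m(x,y,z)\in D$ with $\{m(x,y,z)\}=[x,y]\cap[y,z]\cap[x,z]$; this assignment is $\Homeo(D)$-equivariant, and when $x,y,z$ are pairwise distinct they lie in three different components of $D\setminus\{m(x,y,z)\}$. This canonical median will play the role of the \cat center of a triangle or of the Gromov cocycle on a hyperbolic boundary.

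\emph{Step 1: reduction and measure class.} Replace $D$ by a minimal closed $G$-invariant subdendrite $D_0\subseteq D$, which exists by Zorn's lemma and on which the action is still non-elementary. Produce a canonical $G$-quasi-invariant Borel probability measure class $[\mu]$ on $D_0$ by combining a measurable $G$-equivariant map from an amenable $G$-boundary into $\Prob(D_0)$ with a disintegration or barycentre argument; this is available because $D_0$ is a compact metrisable $G$-space. Both $D_0$ and $[\mu]$ are canonically attached to the action.

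\emph{Step 2: representation and cocycle.} Let $V$ be the orthogonal complement of the constants in $L^2(D_0\times D_0,\mu\otimes\mu)$, with $G$ acting unitarily by the Koopman representation twisted by the square root of the Radon--Nikodym cocycle. Define a bounded $V$-valued $2$-cochain $c\colon G^3\to V$ by an alternating bounded Borel expression built from the medians $m(g_i\xi,g_j\xi,g_k\eta)$ as $(i,j,k)$ ranges over the permutations of $(0,1,2)$. The cocycle identity $dc=0$ will reduce to a finite combinatorial identity about the medians of a five-point configuration, which lives in a finite subtree of $D$ satisfying the classical median axioms; uniform boundedness is immediate since $c$ is built from indicator-type functions. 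This yields a canonical class $[c]\in\hb^2(G,V)$.

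\emph{Step 3: non-vanishing.} The heart of the theorem is to show that $V^G=0$ and that $[c]\neq 0$, and this is where non-elementarity must be used essentially. Both are to be established by contradiction: a non-zero $G$-invariant vector in $V$ should, after an ergodic-style decomposition of $\mu\otimes\mu$ and an application of the three-fold separation at the median, concentrate on the diagonal or on a $G$-invariant pair of points in $D_0$, contradicting non-elementarity; a bounded primitive for $c$ should likewise, after integration via the cocycle identity, yield either a $G$-fixed point of $D_0$ or a $G$-invariant unordered pair. The main obstacle is carrying out these dynamical reductions in the absence of any metric input: since general dendrite actions are not geometrisable, one cannot directly import \catm arguments, and the proof will need to rely on specific features of the topological dynamics of minimal dendrite actions developed elsewhere in the paper.
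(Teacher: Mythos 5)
Your proposal shares the general philosophy of the paper (a median-type degree-two invariant evaluated along a boundary), but as written it has a genuine gap precisely at the point the theorem lives or dies: Step 3. Asserting that a bounded primitive for $c$ ``should'' yield a fixed point or an invariant pair is not an argument; for a cochain defined directly on $G^3$ there is no known mechanism to rule out a bounded primitive. The paper avoids this by never working on $G^3$: it defines a cocycle $\omega$ on $X^3$ with values in $\ell^p(\Lambda(X))$, where $\Lambda(X)$ is the \emph{countable} set of pairs of distinct components of $X\setminus\{x\}$ at branch points $x$ (so $\omega(p,q,r)$ is supported on at most six points of $\Lambda(X)$, whence boundedness in every $\ell^p$), and then pulls $\omega$ back along a measurable Furstenberg map $\fhi\colon B\to X$ from a \emph{strong boundary} $B$. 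Amenability of $B$ realizes $[\fhi^*\omega]$ in $\hbc^2(G,\ell^p(\Lambda(X)))$, and double ergodicity with coefficients shows the class vanishes only if $\fhi^*\omega$ vanishes a.e.; that in turn forces $\fhi$ to send a.e.\ triple into an arc, hence (since $\fhi$ ranges in ends of the minimal sub-dendrite) into at most two points, contradicting non-elementarity. Without this ``vanishing of the class implies vanishing of the cocycle'' mechanism, your Step 3 has no engine. Note also that the existence of the boundary map $B\to X$ is itself a theorem in the paper (it needs the equivariant map $\Prob(X)\to\parts_{1,2}(X)$ and a lifting argument through the fundamental bundle to land in $\Ends(X)$), not a routine barycentre step.

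Two further problems. First, your Step 1 does not produce anything canonical: a quasi-invariant measure class obtained from ``a disintegration or barycentre argument'' applied to ``an amenable $G$-boundary'' depends on the choice of boundary and of the equivariant map, and for a bare (discrete) group there is no preferred measure class on $D_0$; the paper sidesteps this entirely by using the purely topological module $\ell^2(\Lambda(X))$, whose lack of invariant vectors follows because a non-zero level set of an invariant vector would be a finite invariant subset of $\Lambda(X)$, projecting to a finite orbit in $X$. Second, your Step 2 cochain is not actually defined: the medians $m(g_i\xi,g_j\xi,g_k\eta)$ are points of $D$, and it is not said how they yield an element of $L^2(D_0\times D_0)$; you would in effect have to reinvent something like the paper's function $\alpha(p,q)(x,c,c')=\pm 1$ recording on which side of $x$ the points $p,q$ lie, at which point the countable bundle over $\Br(X)$ is the natural home and the $L^2(\mu\otimes\mu)$ framework becomes an unnecessary detour. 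I would recommend restructuring the proof around an explicit $X^3$-cocycle with values in $\ell^2$ of a countable $G$-set and a boundary map, which is essentially forced if one wants to prove non-vanishing.
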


This cohomological theorem can be used in other contexts too; here is an example. A lattice  $\Gamma$ in a product $G_1\times \cdots \times G_n$ is called \emph{irreducible} if its projection to any proper sub-product is dense. This definition coincides with the classical one for semi-simple Lie groups and is discussed at length e.g.\ in~\cite[4.A]{CM09}.

\begin{thm}\label{thm:irred}
Let $\Gamma$ be an irreducible lattice in a product of at least two connected locally compact groups.

Then any $\Gamma$-action on a dendrite is elementary.
\end{thm}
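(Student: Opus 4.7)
The plan is to deduce Theorem~\ref{thm:irred} from Theorem~\ref{thm:coho-intro} by the cohomological induction paradigm of Burger--Monod and Monod--Shalom for irreducible lattices in products. Suppose for contradiction that $\Gamma$ admits a non-elementary action on a dendrite. Theorem~\ref{thm:coho-intro} then provides a canonical unitary $\Gamma$-representation $V$ with no invariant vectors together with a nonzero canonical class $\alpha\in\hb^2(\Gamma,V)$. After grouping factors (using that a product of connected groups is connected), we may assume the ambient group is $G=G_1\times G_2$ with each $G_i$ connected and locally compact.

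The first step is to transfer $\alpha$ to $G$: the induction isomorphism for bounded cohomology of lattices gives $\hb^2(\Gamma,V)\cong\hbc^2(G,W)$, where $W=\mathrm{Ind}_\Gamma^G V$ is the induced unitary representation, so $\alpha$ corresponds to a nonzero class $\tilde\alpha\in\hbc^2(G,W)$. The second step verifies that $W$ has no invariant vector under either factor. Indeed, because $\Gamma$ projects densely into each $G_j$ and $V$ has no $\Gamma$-invariants, a $G_i$-invariant vector of $W$ would, upon identifying $G/G_i$ with $G_j$ and using density of the projection, be forced to equal a $\Gamma$-invariant vector of $V$, hence to vanish.

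The third and hardest step is to conclude that $\tilde\alpha=0$. Represent $\tilde\alpha$ by a $G$-invariant alternating bounded cocycle on $(B_1\times B_2)^3$ with values in $W$, where $B_i$ is a doubly ergodic amenable Poisson boundary for $G_i$ associated with some spread-out measure; such boundaries exist since each $G_i$ is a connected locally compact group. Decomposing along the product structure, the ``pure'' contributions from each factor represent classes in $\hbc^2(G_i,W^{G_j})=0$, while the ``mixed'' contributions are killed by the double ergodicity of $G_i$ on $B_i\times B_i$ combined with the absence of $G_i$-invariants in $W$. This would contradict the non-triviality of $\tilde\alpha$.

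The principal obstacle is this last vanishing: in the generality of arbitrary connected locally compact factors $G_i$ (not assumed semisimple or even reductive), one has to verify double ergodicity of $G_i\curvearrowright B_i\times B_i$ with the potentially large coefficient module $W$, and to implement the splitting of cocycles on $(B_1\times B_2)^3$ cleanly. The connectedness hypothesis enters precisely here, via the existence of a doubly ergodic amenable boundary for each factor and the attendant mixing properties that allow the mixed term to be absorbed.
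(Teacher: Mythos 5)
Your overall strategy (pass to bounded cohomology and exploit the product structure via the Burger--Monod machinery) is in the right family of ideas, but there is a fatal gap in your second step, and as a consequence your argument never genuinely uses the connectedness of the factors. The claim that $W=\mathrm{Ind}_\Gamma^G V$ has no $G_i$-invariant vectors is false. If the $\Gamma$-representation on some non-zero closed subspace $V_0\se V$ extends to a continuous representation $\pi$ of $G$ factoring through the projection $p_j\colon G\to G_j$, then $f(g)=\pi(p_j(g))\inv v_0$ defines a non-zero $G_i$-invariant vector of the induced module for every non-zero $v_0\in V_0$ (it has constant norm, hence lies in the induced module over the finite-volume quotient). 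Your density argument only shows that a $G_i$-invariant vector restricts to a twisted orbit map for the dense subgroup $p_j(\Gamma)$; such maps exist in abundance whenever an extension exists, and nothing forces them to be constant, let alone $\Gamma$-fixed. Consequently $W^{G_i}$ need not vanish, $\hbc^2(G_j,W^{G_i})$ need not vanish, and your third step collapses.

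This is not a repairable technicality, because the statement your argument would prove --- elementarity for irreducible lattices in arbitrary products of locally compact second countable groups --- is false. The group $\SL_2(\ZZ[1/p])$ is an irreducible lattice in $\SL_2(\RR)\times\SL_2(\QQ_p)$ and acts non-elementarily on the end-compactification of the Bruhat--Tits tree of $\SL_2(\QQ_p)$, which is a dendrite (cf.\ Proposition~\ref{prop:vegetale} and the example following Corollary~\ref{cor:ramen}). Since doubly ergodic amenable boundaries exist for \emph{every} locally compact second countable group, the place where you say connectedness enters is not a place where it can do any work. The paper's proof runs in the opposite direction: it invokes Theorem~16 of~\cite{Burger-MonodGAFA} to conclude from the non-vanishing of $\hb^2(\Gamma,\ell^p(\Lambda(X)))$ that some non-zero $\Gamma$-invariant subspace of $\ell^p(\Lambda(X))$ carries a representation extending continuously through one factor $G_i$; connectedness is then used only at the very end, where the isolation of a non-zero value $\lambda$ of a vector $v\in\ell^p(\Lambda(X))$ yields two disjoint closed sets whose union contains the orbit $G_iv$ while each meets it, contradicting the connectedness of $G_i$. (A secondary issue: the induction isomorphism in bounded cohomology uses the $L^\infty$-induced module, not the unitary induction.)
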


In the preliminaries for the proofs of the above theorems, we need to investigate probability measures on dendrites. As a by-product, we obtain a short proof of the fact that every action of an \emph{amenable} group on a dendrite is elementary, first established in~\cite{ShiYe2016}. In fact, this holds in a much wider generality.

\begin{thm}\label{thm:coamen}
If $G$ has two commuting co-amenable subgroups, then any $G$-action on a dendrite is elementary.
\end{thm}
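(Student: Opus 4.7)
The plan is to argue by contradiction: suppose the $G$-action on a dendrite $D$ is non-elementary. The first step combines the preliminaries on probability measures with co-amenability. The paper's analysis of $\Prob(D)$ --- the same technology yielding the short proof of the amenable case --- will attach to every $\mu\in\Prob(D)$ a canonical, $G$-equivariant non-empty subset of $D$ of cardinality at most two; in particular a $G$-invariant probability measure on $D$ forces the action to be elementary. Now if $H\le G$ is co-amenable and acts elementarily on $D$, the associated atomic measure $\delta_p$ or $\tfrac12(\delta_p+\delta_q)$ gives an $H$-fixed point in the compact convex $G$-space $\Prob(D)$, and barycentric averaging against a $G$-invariant mean on $G/H$ produces a $G$-invariant probability measure on $D$, hence a $G$-invariant elementary subset. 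Applying this with $H=H_1$ and then $H=H_2$: if $G$ acts non-elementarily then so do both $H_1$ and $H_2$.

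The second step is a centralizer principle for dendrite actions: two commuting subgroups of $\Homeo(D)$ cannot both act non-elementarily on $D$. This is the dendrite analogue of the classical fact that, in negative curvature, the centralizer of a non-elementary group is elementary at infinity. Concretely, a non-elementary action of $H_1$ on $D$ should yield canonical dynamical data --- for instance a unique minimal invariant closed subset, or distinguished pairs of attractor/repeller points coming from ``hyperbolic''-type elements of $H_1$ (compare the Tits alternative for dendrite actions mentioned in the introduction) --- which any subgroup commuting with $H_1$ must preserve. The rigidity of these canonical objects forces $H_2$ to act elementarily on $D$, contradicting the first step and completing the proof.

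The main obstacle is this second step. The geometric intuition is clear in negative curvature, where centralizers of non-elementary isometric actions are virtually cyclic and fix a pair of boundary points; but in the purely topological category of dendrites there is no metric, no geodesics, and no convergence-group property available a priori, so the argument has to rely on the structural study of dendrite dynamics that the paper develops alongside its measure-theoretic preliminaries.
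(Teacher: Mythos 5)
Your overall architecture matches the paper's: Step~1 (co-amenability transfers elementarity from $H_i$ to $G$ via invariant probability measures and the equivariant map $\Prob(X)\to\parts_{1,2}(X)$ of Proposition~\ref{prob}) is exactly the paper's Lemma~\ref{coamen} and is correct as you state it. The whole weight of the proof therefore rests on your Step~2 --- that two commuting subgroups of $\Homeo(X)$ cannot both act non-elementarily --- and there you have a genuine gap. The mechanism you propose is that $H_2$ must \emph{preserve} the canonical objects attached to the non-elementary action of $H_1$ (the unique minimal set, attractor/repeller pairs), and that this ``rigidity'' forces $H_2$ to be elementary. That inference does not go through: the unique minimal $H_1$-invariant closed set $M$ can be a Cantor set of ends, and a group preserving a Cantor set is in no way constrained to be elementary (indeed $H_1$ itself preserves $M$ and is non-elementary by hypothesis). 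Preservation of a canonical invariant set is simply too weak a conclusion.

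The missing idea is the fixed-point property of dendrites (Lemma~\ref{lem:FP}: every homeomorphism of a dendrite fixes a point), used in the \emph{opposite} direction from the one you suggest. For each $h\in H_2$, the set $\Fix(h)$ is non-empty, closed, and invariant under $H_1$ (since $H_1$ commutes with $h$); by uniqueness of the minimal $H_1$-invariant closed set (Lemma~\ref{lem:min}, which requires $H_1$ non-elementary), $\Fix(h)\supseteq M$. Hence $H_2$ fixes $M$ \emph{pointwise}, so $H_2$ has a global fixed point and is elementary --- contradicting Step~1. Note that no metric, convergence-group, or attractor/repeller structure is needed at all; the topological fixed-point theorem for dendrites (Scherrer, or Schauder via the absolute-retract property) is precisely the ingredient that substitutes for the negative-curvature intuition you were reaching for, and without it your Step~2 remains an unproved assertion.
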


See Section~\ref{sec:coamen} for context; for instance, \emph{any} group whatsoever can be embedded in a group admitting two commuting co-amenable subgroups.

Special cases of this theorem include the following, noting that the case of $F$ answers Problem~6 in~\cite{ShiYe2016}.

\begin{cor}\label{cor:F-etc}
Consider Thompson's group $F$ or any of the groups of piecewise M\"obius transformations of the line introduced in~\cite{Monod_PNAS}.

Then any action of these groups on a dendrite is elementary.
\end{cor}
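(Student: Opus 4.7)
The strategy is to reduce Corollary~\ref{cor:F-etc} to Theorem~\ref{thm:coamen} by exhibiting, in each of the groups under consideration, two commuting subgroups that are co-amenable in the ambient group. Once such subgroups are produced, the theorem applies directly.

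For Thompson's $F$, viewed as dyadic piecewise linear homeomorphisms of $[0,1]$, the natural candidates are
\[
F_L = \{g \in F : g|_{[1/2,1]} = \mathrm{id}\}, \qquad F_R = \{g \in F : g|_{[0,1/2]} = \mathrm{id}\}.
\]
Each is isomorphic to $F$ via rescaling, and they commute because their supports intersect only at the fixed point $1/2$. The non-trivial task is to establish co-amenability of $F_L$ in $F$ (and symmetrically of $F_R$): one needs an $F$-invariant mean on the coset space $F/F_L$, which can be identified with the set of right-halves $f|_{[1/2,1]}$ of elements $f \in F$. A natural route is to begin with an $F$-invariant mean on the orbit $F \cdot (1/2)$ (the dyadic rationals in $(0,1)$), obtained as a weak-$*$ limit of averages over nested dyadic partitions, and to lift it along the fibers parameterizing the allowed behavior on $[1/2,1]$, exploiting that the relevant fiber action is by a copy of $F$ acting by dyadic rescalings.

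For the piecewise M\"obius groups $H(A)$ from \cite{Monod_PNAS}, acting on $\mathbf{P}^1(\RR)$ and fixing $\infty$, the construction is parallel. Two disjoint bounded intervals $I_1, I_2 \subset \RR$ give rise to commuting subgroups $H(A,I_1), H(A,I_2)$ of elements compactly supported in $I_1$ and $I_2$ respectively, each isomorphic to the full group. Co-amenability should follow by the analogous argument, exploiting that the quotient of $H(A)$ by its subgroup $B(A)$ of compactly supported elements is affine and hence amenable, while the compactly-supported part carries enough intrinsic symmetry to construct invariant means on the coset spaces associated with a single interval.

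The principal obstacle is the verification of co-amenability of these support-defined subgroups: commutativity and the overall subgroup structure are transparent, whereas producing genuinely $G$-invariant means on the coset spaces $F/F_L$ and its M\"obius analogues requires careful use of the PL (respectively projective) structure specific to each family, together with the fact that in both cases one has a natural quasi-invariant measure class coming from the action on the line. Once these verifications are in hand, Theorem~\ref{thm:coamen} concludes that every action on a dendrite is elementary.
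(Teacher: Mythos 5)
Your overall strategy is the same as the paper's: Corollary~\ref{cor:F-etc} is obtained there as a special case of Theorem~\ref{thm:coamen}, quoting~\cite[\S 2.A]{CM09} for the fact that $F$ and the groups of~\cite{Monod_PNAS} admit two commuting co-amenable subgroups (the paper also records a second route via the Tits alternative of Theorem~\ref{thm:Tits}, since none of these groups contains a non-abelian free subgroup). Your choice of subgroups $F_L, F_R$ is the standard one. The problem lies in your proposed verification of co-amenability. The plan ``take an invariant mean on the orbit $F\cdot(1/2)$ and lift it along the fibers'' amounts to disintegrating a putative invariant mean on $F/F_L$ over $F/F_{1/2}$, where $F_{1/2}$ is the stabiliser of $1/2$. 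But $F_{1/2}\cong F_L\times F_R$ up to an abelian germ quotient, and the fiber $F_{1/2}/F_L$ is a copy of $F_R\cong F$ on which $F_{1/2}$ acts through $F_R$ by left translations. An equivariant assignment of means on the fibers is therefore exactly an invariant mean on $F$, i.e.\ the amenability of Thompson's group $F$ --- the famous open problem. Indeed, $F_L$ is co-amenable in $F$ but is not known to be co-amenable in the intermediate subgroup $F_L\times F_R$; this is precisely the standard example (cf.~\cite{Monod-Popa}) of the failure of transitivity of co-amenability. So the fiberwise lifting cannot close the argument, and the same objection applies to your sketch for the piecewise M\"obius groups.

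The verification that actually works is the increasing-union-of-conjugates argument. Choose $g_n\in F$ with $g_n(1/2)=1-2^{-n}$; then $g_nF_Lg_n^{-1}$ is the subgroup of elements supported in $[0,1-2^{-n}]$, and these subgroups increase to the kernel $N$ of the germ-at-$1$ homomorphism $F\to\ZZ$. Any weak-* cluster point of the Dirac masses $\delta_{g_nF_L}$ on $F/F_L$ is an $N$-invariant mean (for $h\in N$ one has $hg_nF_L=g_nF_L$ eventually), and since $N$ is normal with amenable quotient, one obtains an $F$-invariant mean on $F/F_L$; symmetrically for $F_R$. The analogous conjugation argument, expanding a bounded interval or a half-line to exhaust the complement of a point, handles the groups of~\cite{Monod_PNAS}. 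Alternatively, you could bypass co-amenability entirely: by Brin--Squier for $F$, and by construction for the groups of~\cite{Monod_PNAS}, these groups contain no non-abelian free subgroup, so Theorem~\ref{thm:Tits} gives elementarity directly.
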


However, these special cases also follow from a \emph{Tits alternative} that we establish:

\begin{thm}\label{thm:Tits}
Let $G$ be a group acting on a dendrite.

Then either $G$ contains a non-abelian free subgroup or its action is elementary.
\end{thm}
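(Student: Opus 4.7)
The plan is to follow the classical ping-pong blueprint, adapted to the topological setting of a dendrite $D$. Assume the $G$-action on $D$ is non-elementary; the goal is to construct a non-abelian free subgroup of $G$. First I would pass to a minimal non-empty closed $G$-invariant subdendrite $D_0 \se D$ via Zorn's lemma: the intersection of a decreasing chain of non-empty closed subdendrites is again a non-empty subdendrite by compactness and connectedness of intersections in continua. On $D_0$, non-elementarity of $G \action D_0$ forces every $G$-orbit to be infinite.

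The key technical step is to produce a \emph{contracting} element $g \in G$: a pair of distinct points $g^+, g^- \in D_0$ such that for any open neighbourhood $U$ of $g^+$ and any compact $K \se D_0 \setminus \{g^-\}$, one has $g^n K \se U$ for all sufficiently large $n$. To obtain such $g$, I would choose a non-trivial arc $[a,b] \se D_0$ and use minimality to find $\gamma \in G$ and a cut point $c \in [a,b]$ for which $\gamma$ maps the sub-arc $[a,c]$ strictly inside itself (the topological analogue of a Schottky move). Iterating then produces nested subdendrites whose intersection collapses to a single fixed point $g^+$, with the opposite fixed point $g^-$ coming from $\gamma\inv$.

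Given one contracting element $g$, non-elementarity prevents the pair $\{g^+, g^-\}$ from being $G$-invariant, so there exists $h \in G$ with $\{h g^+, h g^-\} \cap \{g^+, g^-\} = \emptyset$. The conjugate $hgh\inv$ is then contracting with fixed points $\{h g^+, h g^-\}$, disjoint from those of $g$. The standard ping-pong argument applied to sufficiently high powers of $g$ and $hgh\inv$ produces a free subgroup of rank two.

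The main obstacle will be the construction of the contracting element $g$: dendrites carry no intrinsic metric and no a priori translation length, so there is no direct notion of ``loxodromic'' element. My substitute is the unique arc-structure of the dendrite together with the abundance of cut points and the topological minimality of $D_0$; together these should force the existence of some group element that strictly displaces an arc into itself in the required sense, and the convergence to a single point upon iteration will require a separate compactness argument using that $D_0$ has no proper invariant subdendrite.
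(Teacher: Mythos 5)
There is a genuine gap, and it sits exactly where you predicted: the construction of the contracting element. Your mechanism is to find $\gamma$ mapping a sub-dendrite strictly inside itself and then claim that ``iterating produces nested subdendrites whose intersection collapses to a single fixed point $g^+$.'' That collapse is not true in general: the intersection $\bigcap_{n\geq 0}\gamma^n(Y)$ of a nested sequence of sub-dendrites is a non-empty $\gamma$-invariant sub-dendrite, but nothing forces it to be a point. Already on an arc, a homeomorphism fixing a Cantor set pointwise and pushing each complementary interval in one direction gives nested images whose intersection is a non-degenerate arc. The paper is explicit that individual dendrite homeomorphisms do not have North--South dynamics: its ``tectonic'' decomposition shows the non-wandering part $K(g)$ can have $2^{\aleph_0}$ components and Cantor sets of fixed points, and the existence result it does prove for single elements (an \emph{austro-boreal} arc) is only a local hyperbolicity, obtained precisely by applying the fixed-point property to the nested intersections $K_\pm$ --- with no claim that $K_\pm$ are singletons. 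So a contracting element in your sense may simply fail to exist, and in any case your argument does not produce one.

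The paper's proof avoids point dynamics altogether: after reducing to the dendro-minimal case, it uses the key lemma that for any \emph{proper} sub-dendrite $Y$ there is $g\in G$ with $gY\cap Y=\varnothing$ (a Helly-type argument), and then plays ping-pong directly with sub-dendrites $A_-=U_p(x)$ and $A_+=a(X\setminus A_-)$ for a suitable product $a=hg$, where $U_p(x),U_p(y)$ cover $X$ because $p$ is a regular point. The second generator is obtained not by separating four fixed points but by conjugating $a$ by an $f$ that moves the proper sub-dendrite $Y=U_p(x)\cup U_q(y)\cup[p,q]$ (which contains both $A_\pm$) entirely off itself; this is why the arc $[p,q]$ is arranged to contain a branch point, so that $Y\neq X$. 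If you want to salvage your outline, replace ``attracting/repelling fixed points plus high powers'' by ping-pong on the sub-dendrites themselves; note also that even granting contracting elements, your step producing $h$ with $\{hg^+,hg^-\}\cap\{g^+,g^-\}=\varnothing$ needs an argument, since non-invariance of the pair only gives $\{hg^+,hg^-\}\neq\{g^+,g^-\}$, not disjointness.
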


In contrast to some of our other arguments, this is proved rather directly in parallel with the classical cases of trees and $\RR$-trees (cf.\ e.g.~\cite{Pays-Valette}).

The particular case of (topologically) minimal actions was established earlier in~\cite{Shi2012} using ergodic tools; unfortunately, it seems that one cannot reduce general actions on dendrites to that minimal case.

\medskip

The dynamics of an individual homeomorphism of a dendrite can exhibit diverse behaviours; we give a ``tectonic'' description in Proposition~\ref{prop:tectonic} below. However, from the perspective of non-elementary groups, we recover a global phenomenon reminiscent of negative curvature:

\begin{thm}\label{thm:prox-intro}
Let $G$ be a group with a non-elementary action on a dendrite $X$.

Then $X$ contains a canonical compact $G$-set which is a $G$-boundary in Furstenberg's sense.
\end{thm}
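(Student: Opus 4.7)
The plan is to identify the canonical compact $G$-set as the unique minimal nonempty closed $G$-invariant subset $M \subseteq X$, and then to verify that the $G$-action on $M$ is strongly proximal; by definition this exhibits $M$ as a $G$-boundary in Furstenberg's sense.

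\emph{Step 1: A canonical minimal invariant subset.} A Zorn argument applied to the family of nonempty closed $G$-invariant subsets of $X$, ordered by reverse inclusion, produces at least one minimal such $M$. To make this set canonical I would prove uniqueness: if $M_1$ and $M_2$ were two distinct minimal invariant subsets, then the dendritic nearest-point retraction onto the subcontinuum spanned by $M_1$ supplies a continuous equivariant map, and the combinatorics of the arcs joining $M_1$ to $M_2$ force an invariant point or an invariant unordered pair, contradicting the non-elementarity assumption. Compactness of $M$ is automatic, as a closed subset of the compact dendrite $X$.

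\emph{Step 2: Strong proximality.} By Theorem~\ref{thm:Tits}, $G$ contains a non-abelian free subgroup, and in particular many ``hyperbolic-type'' homeomorphisms in the sense of the tectonic Proposition~\ref{prop:tectonic}: each such $g$ comes with a pair of endpoints $\xi^\pm(g) \in M$ such that, for any neighborhoods $U^\pm$ of $\xi^\pm(g)$, one has $g^n(X \setminus U^-) \subseteq U^+$ for $n$ sufficiently large. Given $\mu \in \Prob(M)$ with $\mu(\{\xi^-(g)\}) = 0$, the pushforwards $g^n_*\mu$ then converge weakly-$*$ to $\delta_{\xi^+(g)}$. To handle measures charging a particular $\xi^-$, choose a second independent hyperbolic-type element $g'$ from the free subgroup, with endpoints distinct from those of $g$: one of $g, g', (g')^{-1}, \dots$ moves $\mu$ to a measure with no atom at its repelling endpoint, after which the previous argument concludes. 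Minimality of $M$ was built in at Step 1, so together with strong proximality this yields the $G$-boundary conclusion.

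\emph{Main obstacle.} The delicate step is Step 2: one must extract from the dendritic dynamics a contracting element whose attraction is uniform enough to produce weak-$*$ convergence to a Dirac, and whose attracting endpoint actually lies in the minimal set $M$. The tectonic description of individual homeomorphisms is designed to furnish such elements, but marrying it to the Tits-alternative free subgroup in order to produce \emph{enough} independent contractions, all with endpoints inside $M$, is the technical crux; this is also where one really needs non-elementarity rather than a weaker assumption on the action.
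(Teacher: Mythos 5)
Your Step~1 is fine and matches the paper: the canonical set is the unique minimal closed invariant set $M$ of Lemma~\ref{lem:min}, and the uniqueness argument via the first-point retraction is exactly the one sketched in the remark following that lemma. The problem is Step~2. You assert that an austro-boreal element $g$ has North--South dynamics on $X$, i.e.\ that $g^n(X\setminus U^-)\se U^+$ for small neighbourhoods $U^\pm$ of two endpoints. This is false for a general dendrite homeomorphism, and Proposition~\ref{prop:tectonic} itself tells you why: the complement $K(g)$ of the austro-boreal regions is a $g$-invariant union of sub-dendrites, each with a non-empty connected fixed-point set, and it can be huge (e.g.\ $2^{\aleph_0}$ components with Cantor sets of fixed points). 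Points of $\Fix(g)$ away from the austro-boreal arc never enter $U^+$. Even the ping-pong element $a=hg$ from the proof of Theorem~\ref{thm:Tits} only satisfies $a(X\setminus U_p(x))\se U_q(y)$; iterating contracts $X\setminus U_p(x)$ into the invariant sub-dendrite $K_+=\bigcap_{n\ge 0}a^n(U_q(y))$, which need not be a single point. So the powers $g^n_*\mu$ need not converge to a Dirac mass, and no amount of choosing a second independent element repairs this: the obstruction is not the atom at the repelling point but the failure of pointwise attraction.

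The paper circumvents this by not using a single contracting element at all. Working in the dendro-minimal dendrite $[M]$, one fixes an end $x$ and a target measure $m$, picks a sequence of \emph{distinct} ends $y_n\neq x$ (so that $m(\{y_n\})\to 0$ automatically, which is how the atom issue is handled), and regular points $z_n\in[y_n,x_n]$ with $x_n\to x$ chosen so that $m(U_{z_n}(y_n))\le m(\{y_n\})+1/n$. Lemma~\ref{lem:dis} (the disjointness consequence of dendro-minimality) then supplies, \emph{for each $n$ separately}, elements $g_n,h_n$ with $h_ng_n(U_{z_n}(x))\se U_{x_n}(x)$; since $U_{x_n}(x)$ is a nested neighbourhood basis of the end $x$ and $m(U_{z_n}(x))\to 1$, one gets $(h_ng_n)_*m\to\delta_x$. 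If you want to salvage your approach you would need to first prove that some element genuinely attracts all of $X$ minus a small set to a single point, which is essentially as hard as the theorem; the varying-elements argument via Lemma~\ref{lem:dis} is the missing idea.
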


\bigskip

\textbf{1.B --- Curves.}
The reader might wonder whether the results presented here really depend on the dendrite structure or just on the one-dimensionality of these topological spaces. For instance, it has been proved that lattices in higher rank algebraic groups have strong obstructions to acting on one-manifolds~\cite{Witte94, Ghys99, Burger-Monod1}. This regards mostly the circle, because much less is known about actions on the interval (which are elementary in our sense anyway).

\medskip
Consider thus a general compact Hausdorff space of dimension one; for instance, a topologist's \emph{curve}, namely a continuum of dimension one. It turns out that our results \emph{emphatically do not hold} in this generality. For instance, any countable residually finite group acts \emph{freely} on the Menger curve~\cite{Menger26}, as follows from~\cite[Thm.~1]{Dranishnikov88} or~\cite{Sakai94}. Such groups include all lattices of Theorem~\ref{thm:higher} above; see Section~\ref{sec:curves} for further discussion.

\medskip
There is however a class of better behaved curves, namely \emph{local dendrites}. By definition, this refers to any continuum in which every point has a neighbourhood which is a dendrite. In fact, a one-dimensional continuum is a local dendrite if and only if it is an absolute neighbourhood retract~\cite{Ward60}, \cite[5.1]{Nadler-PC}. For instance, the Berkovich space of any connected projective scheme of pure dimension one over a separable complete valued field is a local dendrite~\cite[8.1]{Hrushovski-Loeser-Poonen}.

Combining Theorem~\ref{thm:higher} with rigidity results for the circle, we obtain the following.

\begin{cor}\label{cor:higher:loc}
Let $\Gamma$ be a lattice in a simple algebraic group of rank at least two.

Then any $\Gamma$-action on a local dendrite has a finite orbit.
\end{cor}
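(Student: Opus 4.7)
The plan is to combine Theorem~\ref{thm:higher} with the known rigidity of higher-rank lattice actions on the circle, bridging the two via a classical structural fact about local dendrites: by a theorem of Kuratowski (see \cite{Nadler-PC}), a local dendrite contains only \emph{finitely many} simple closed curves. If the local dendrite $X$ contains none, then $X$ is itself a dendrite and Theorem~\ref{thm:higher} immediately gives a $\Gamma$-fixed point or a $\Gamma$-invariant pair, that is, a finite orbit of size at most~$2$.

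In the remaining case, let $C_1,\dots,C_n$ (with $n\geq 1$) be the simple closed curves of $X$. Since $\Gamma$ acts by homeomorphisms, it permutes this finite set, so a finite-index subgroup $\Gamma_0\leq\Gamma$ stabilises each $C_i$ setwise. Restricting to $C_1\cong S^1$, I would invoke the results of \cite{Witte94, Ghys99, Burger-Monod1} to conclude that the $\Gamma_0$-action on the circle factors through a finite quotient. Consequently every $\Gamma_0$-orbit in $C_1$ is finite, and its $\Gamma$-saturation is a finite $\Gamma$-invariant subset of $X$, as required.

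The main obstacle should be essentially bibliographic: one must verify that the cited circle rigidity theorems cover the paper's notion of ``simple algebraic group'' in full generality (arbitrary local fields, possibly of positive characteristic), and that a finite-index subgroup $\Gamma_0$ still satisfies the hypotheses of those theorems. Both points are handled uniformly by the bounded-cohomological approach of \cite{Burger-Monod1}, so no genuinely new argument should be needed beyond citing the appropriate results.
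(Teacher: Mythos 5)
Your proposal is correct and follows essentially the same route as the paper: both reduce via the finiteness of simple closed curves in a local dendrite to either the dendrite case (Theorem~\ref{thm:higher}) or to a finite-index subgroup preserving a circle, where the circle rigidity of~\cite{Ghys99,Burger-Monod1} is invoked and the finite orbit is saturated back to $\Gamma$. The only caveat is that you assert the circle action ``factors through a finite quotient,'' which is stronger than what those references provide for actions by homeomorphisms; they give the existence of a \emph{finite orbit}, which is exactly what the paper uses and is already sufficient for your saturation argument.
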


In Section~\ref{sec:curves}, we further combine our other results for dendrites with known results for the circle and establish:

\begin{itemize}
\item a cohomological obstruction to actions on local dendrites,
\item a result for groups with commuting co-amenable subgroups,
\item a Tits alternative for actions on local dendrites.
\end{itemize}

\tableofcontents

\section{Preliminaries on dendrites}\label{sec:prelim}

\begin{flushright}
\begin{minipage}[t]{0.7\linewidth}\itshape\small
On voit autour des expansions dendritiques une infinit\'e de petits appendices
\begin{flushright}
\upshape\small
Micheline Stefanowska (1897), p.~359 in~\cite{Stefanowska}.
\end{flushright}
\end{minipage}
\end{flushright}
\vspace{3mm}

Recall that a \emph{continuum} is a (non-empty) connected compact metrisable space. A continuum $X$ is a \emph{dendrite} if and only if any one of the following equivalent conditions is satisfied:

\smallskip
\begin{enumerate}[label=(\roman*)]
\item Any two distinct points of $X$ can be separated by a point.\label{pt:sep}
\item $X$ is locally connected and contains no simple closed curve.
\item The intersection of any two connected subsets of $X$ remains connected.\label{pt:intersect}
\item $X$ is a one-dimensional absolute retract.
\item $C(X)$ is projective in the category of unital C*-algebras.
\end{enumerate}

\noindent
See e.g.~\cite{Chacha}, \cite{Chigogidze-Dranishnikov}; a reference containing all those preliminary facts that we do not justify is~\cite[\S10]{Nadler}.

\medskip

Any non-empty closed connected subset of a dendrite is path-connected and is again a dendrite. The characterisation~\ref{pt:intersect} implies that any subset $A$ of a dendrite is contained in a unique minimal closed connected subset, which we denote by $[A]$. (A closed connected subset is automatically a \emph{sub-dendrite}, i.e.\ itself a dendrite.) When $A$ consists of two points $x,y$ we denote this dendrite simply by $[x,y]$; it is an \emph{arc}, i.e.\ a homeomorphic image of a compact interval in the real line. We define the \emph{interior} of an arc $[x,y]$ to be $[x,y]\setminus\{x,y\}$. Yet another characterisation of dendrites amongst continua is:

\smallskip
\begin{enumerate}[label=(\roman*)]\setcounter{enumi}{5}
\item Any two points are the extremities of a unique arc in $X$.
\end{enumerate}

\smallskip

For any point $x$ in a dendrite $X$, all connected components of $X\setminus \{x\}$ are open (by local connectedness). The cardinality of this set of component coincides with the Menger--Urysohn \emph{order} of $x$ in $X$~\cite[\S46, I]{Kuratowski_T2}. The point $x$ is called an \emph{end point} if it has order~$1$ and a \emph{branch point} if it has order~$\geq 3$. We denote by $\Ends(X)$ and $\Br(X)$ the subsets of end points and branch points. The other points, of order~$2$, are called \emph{regular}. There are only countably many branch points (throughout this article, \emph{countable} means~$\leq \aleph_0$). The set of end points is non-empty and the set of regular points is dense; in fact, dense in any arc.

\bigskip

The following is a simple topological version of Helly's theorem.

\begin{lem}\label{Helly}
Let $X$ be a dendrite and $\sY$ be a collection of closed connected subsets $Y\se X$ that have pairwise non-empty intersection. Then the intersection of all members of $\sY$ is non-empty.
\end{lem}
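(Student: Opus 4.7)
The plan is to reduce the statement to the finite case by compactness of $X$, and to prove the finite case by induction with base $n=3$ relying on the existence of a median point in a dendrite.

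For the base case, suppose $Y_1, Y_2, Y_3$ are closed connected subsets with pairwise non-empty intersection, and pick $a_{ij} \in Y_i \cap Y_j$. Each $Y_k$ is itself a sub-dendrite and therefore contains the unique arc between any two of its points, so that $[a_{12}, a_{13}] \subseteq Y_1$, $[a_{12}, a_{23}] \subseteq Y_2$ and $[a_{13}, a_{23}] \subseteq Y_3$. The sub-dendrite $[\{a_{12}, a_{13}, a_{23}\}]$ spanned by these three points is either an arc (in which case one of the $a_{ij}$ lies between the other two) or a tripod with a unique branch point; in either situation there is a single point $m$ lying on all three of the above arcs, and hence in $Y_1 \cap Y_2 \cap Y_3$.

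For the inductive step from $n \geq 3$ to $n+1$, let $Y_1, \ldots, Y_{n+1}$ be pairwise intersecting closed connected subsets and set $Z_i := Y_i \cap Y_{n+1}$ for $i \leq n$. Each $Z_i$ is closed and, by property~\ref{pt:intersect}, connected; it is non-empty by hypothesis, and $Z_i \cap Z_j = Y_i \cap Y_j \cap Y_{n+1}$ is non-empty by the $n=3$ case applied to the triple $Y_i, Y_j, Y_{n+1}$. The inductive hypothesis applied to $Z_1, \ldots, Z_n$ yields a common point, which automatically lies in every $Y_k$. For an arbitrary collection $\sY$, the finite sub-collections thus enjoy the finite intersection property; since $X$ is compact and every $Y \in \sY$ is closed, the total intersection $\bigcap \sY$ is forced to be non-empty.

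The only genuinely dendrite-specific ingredient, and therefore the main obstacle, is the base case: everything hinges on the tripod/arc dichotomy for the sub-dendrite generated by three points, which itself rests on property~\ref{pt:intersect} and the uniqueness of arcs. Once that is in hand, the remainder is a routine compactness-plus-induction argument that would work in any ``Helly dimension one'' setting.
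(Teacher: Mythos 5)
Your proof is correct. The reduction to finitely many sets via compactness and the finite intersection property is exactly what the paper does, but from there the two arguments diverge: the paper picks a point $x_{Y,Y'}$ in each pairwise intersection, passes to the compact tree $X_0=[\{x_{Y,Y'}\}]$ spanned by these points, replaces each $Y$ by the connected set $Y\cap X_0$, and then simply cites Helly's theorem for trees. You instead prove the finite case from scratch by induction, with the median of three points (the tripod/arc dichotomy for $[\{a_{12},a_{13},a_{23}\}]$) as the base case and the observation that the sets $Z_i=Y_i\cap Y_{n+1}$ are again closed, connected (by characterisation~\ref{pt:intersect}) and pairwise intersecting as the inductive step. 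Your base case is essentially the combinatorial core of Helly's theorem for trees, so the two proofs rest on the same idea; what your version buys is self-containedness (no external reference, and no need to verify that $X_0$ is a finite simplicial tree), at the cost of a slightly longer write-up. Each step you use — that a closed connected subset of a dendrite is a sub-dendrite containing the unique arc between any two of its points, and that intersections of connected subsets stay connected — is available from the preliminaries, so the argument goes through as written.
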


\begin{proof}
By compactness, we can assume that $\sY$ is finite. Choose a point $x_{Y,Y'}$ in each $Y\cap Y'$. We can replace $X$ by the compact tree $X_0=[\{x_{Y,Y'}: Y, Y'\in \sY\}]$ and each $Y$ by $Y\cap X_0$, which is connected. Now the result follows from Helly's theorem for trees (where connectedness coincides with convexity).
\end{proof}

We will also need the following form of acyclicity.

\begin{lem}\label{lem:carre}
Let $z_i$ be closed connected subsets of a dendrite $X$ indexed by $i\in \ZZ/4\ZZ$. If $z_i\cap z_{i+1}\neq\varnothing$ for all $i$, then either $z_0\cap z_1\cap z_2\neq\varnothing$ or $z_1 \cap z_2 \cap z_3\neq\varnothing$.
\end{lem}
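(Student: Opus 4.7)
The plan is to argue by contradiction: assume $z_0 \cap z_1 \cap z_2 = \varnothing$ and $z_1 \cap z_2 \cap z_3 = \varnothing$, and deduce that some arc cannot be simultaneously connected and covered by two disjoint closed pieces.

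The first step is to sharpen the assumption using Helly's lemma (Lemma~\ref{Helly}). Applying it to the three sets $z_0, z_1, z_2$: since the pair $z_0 \cap z_1$ and $z_1 \cap z_2$ are non-empty but the triple intersection is not, the remaining pair $z_0 \cap z_2$ must be empty. The same reasoning on $\{z_1, z_2, z_3\}$ yields $z_1 \cap z_3 = \varnothing$. So under the contradiction hypothesis, the ``diagonal'' pairs are disjoint, while the ``consecutive'' pairs remain non-empty.

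Next I would pick witnesses $a \in z_0 \cap z_1$, $b \in z_1 \cap z_2$, $c \in z_2 \cap z_3$, $d \in z_3 \cap z_0$. Since each $z_i$ is closed connected, hence a sub-dendrite, it contains the arc between any two of its points; thus $[a,b] \subseteq z_1$, $[b,c] \subseteq z_2$, $[c,d] \subseteq z_3$, $[d,a] \subseteq z_0$. I would then invoke the standard dendrite fact that for any three points $x,y,z$ one has $[x,z] \subseteq [x,y] \cup [y,z]$ (which follows immediately from the characterisation that $[x,z]$ is the smallest closed connected set containing $x$ and $z$, and from connectedness of the union $[x,y] \cup [y,z]$ through $y$). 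Applied twice to the arc $[a,c]$, this gives
\[
[a,c] \,\subseteq\, [a,b] \cup [b,c] \,\subseteq\, z_1 \cup z_2
\qquad\text{and}\qquad
[a,c] \,\subseteq\, [a,d] \cup [d,c] \,\subseteq\, z_0 \cup z_3.
\]

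Intersecting the two inclusions and distributing yields
\[
[a,c] \,\subseteq\, (z_0 \cap z_1) \cup (z_0 \cap z_2) \cup (z_1 \cap z_3) \cup (z_2 \cap z_3),
\]
and the two middle terms vanish by the first step, leaving $[a,c] \subseteq (z_0 \cap z_1) \cup (z_2 \cap z_3)$. These are two closed sets, each meeting $[a,c]$ (at $a$ and $c$ respectively), covering the connected arc $[a,c]$. The final step is the standard connectedness argument: they must meet, producing a point in $z_0 \cap z_1 \cap z_2 \cap z_3$, which in particular lies in the supposedly empty set $z_0 \cap z_2$. This is the desired contradiction.

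The only slightly delicate ingredient is the inclusion $[a,c] \subseteq [a,b] \cup [b,c]$; everything else is a direct application of Helly's lemma and basic connectedness. I do not anticipate a real obstacle, since both the median structure on dendrites and the minimality of arcs among closed connected supersets are available from the preliminary facts recalled in Section~\ref{sec:prelim}.
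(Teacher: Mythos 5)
Your argument is correct, but it is a genuinely different (and considerably longer) route than the one in the paper. The paper's entire proof is a single application of Lemma~\ref{Helly}: since $z_3\cap z_0\neq\varnothing$, the union $z_0\cup z_3$ is closed and connected, and the three sets $z_0\cup z_3$, $z_1$, $z_2$ have pairwise non-empty intersections; Helly then gives a point of $(z_0\cup z_3)\cap z_1\cap z_2$, which distributes immediately into the stated disjunction. Your proof instead argues by contradiction: you use Helly twice to kill the diagonal intersections $z_0\cap z_2$ and $z_1\cap z_3$, then exploit the arc structure of sub-dendrites (the inclusion $[a,c]\subseteq[a,b]\cup[b,c]$, i.e.\ the minimality of $[\,\cdot\,]$ among closed connected supersets) and a connectedness argument on the arc $[a,c]$ to manufacture a point of $z_0\cap z_1\cap z_2\cap z_3$, contradicting the emptiness of $z_0\cap z_2$. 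All steps check out, including the degenerate case $a=c$, and the derivation of $z_0\cap z_2=\varnothing$ from Helly is sound. What the paper's trick buys is brevity and the absence of any appeal to arcs or to the median-like structure of dendrites: it uses only that a union of two intersecting connected sets is connected. What your version buys is a slightly stronger conclusion in the contradiction branch (a common point of all four sets) and a proof that would transfer verbatim to any space where Helly for closed connected sets and uniqueness of arcs hold; but since the paper's one-line argument also needs nothing beyond Lemma~\ref{Helly}, the extra machinery here is not paying for additional generality. It is worth internalising the ``take the union of two non-adjacent sets'' device, as it is the standard way to reduce cyclic intersection patterns to the tree-like Helly property.
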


\begin{proof}
Apply Lemma~\ref{Helly} to the three sets $z_0\cup z_3$, $z_1$, $z_2$.
\end{proof}

End points can be separated from connected sets in the following sense.

\begin{lem}\label{lem:KM}
Let $C$ be a closed subset of a dendrite $X$ and $x\in \Ends(X)$. If $x\in [C]$, then $x\in C$.
\end{lem}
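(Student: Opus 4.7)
The plan is to argue by contradiction: assuming $x\in\Ends(X)$ with $x\in[C]$ and $x\notin C$, I will construct a closed connected subset of $X$ that contains $C$ but omits $x$, contradicting the minimality in the definition of $[C]$.

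Since $[C]\ni x$, $C$ is non-empty; fix some $y\in C$ (so $y\neq x$) and parametrise the unique arc $[x,y]$ as $\gamma\colon[0,1]\to X$ with $\gamma(0)=x$, $\gamma(1)=y$. For each $t\in(0,1)$ such that $\gamma(t)$ is a regular point of $X$ --- a dense set of values, by the density of regular points on arcs recalled above --- let $V_t$ and $W_t$ be the two components of $X\setminus\{\gamma(t)\}$, with $x\in V_t$, so that $\overline{W_t}=W_t\cup\{\gamma(t)\}$ is a sub-dendrite. If I can find such a $t_0$ with $\overline{V_{t_0}}\cap C=\varnothing$, the contradiction follows at once: $C\se\overline{W_{t_0}}$, and $\overline{W_{t_0}}$ is closed, connected and omits $x\in V_{t_0}$, so $[C]\se\overline{W_{t_0}}$ contradicts $x\in[C]$.

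The key claim is that $\{\overline{V_t}\}$ is decreasing in $t$ and $\bigcap_t\overline{V_t}=\{x\}$; granting this, the standard nested-compacta argument applied to the closed family $\overline{V_t}\cap C$ (decreasing, with empty intersection since $x\notin C$) yields the required $t_0$. Monotonicity is immediate: for $t_1<t_2$ and $z\in V_{t_1}$, if $\gamma(t_2)\in[x,z]$ then $[x,\gamma(t_2)]\se[x,z]$ would contain $\gamma(t_1)$, contradicting $\gamma(t_1)\notin[x,z]$.

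The main obstacle is the intersection identity, and it is precisely here that the end-point hypothesis is essential. Suppose $z\neq x$ lies in every $\overline{V_t}$: then for all but one $t$ (by injectivity of $\gamma$) one has $\gamma(t)\notin[x,z]$. Since $[x,z]\cap[x,y]$ is a sub-arc starting at $x$ and regular $t$'s are dense in $(0,1)$, this forces $[x,z]\cap[x,y]=\{x\}$. Via the tripod/median structure of a dendrite this in turn gives $x\in[y,z]$, so $x$ would separate $y$ from $z$ --- incompatible with $x\in\Ends(X)$, which is equivalent to $X\setminus\{x\}$ being connected.
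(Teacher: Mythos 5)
Your proof is correct. It follows the same skeleton as the paper's --- produce a closed connected set containing $C$ but missing $x$, contradicting the minimality of $[C]$ --- but where the paper simply invokes, from the definition of the Menger--Urysohn order (with a pointer to Nadler), that an end point admits a basis of open neighbourhoods whose boundary is a single point and whose complements are therefore connected, you construct such neighbourhoods by hand: the $x$-side components $V_t$ of the regular points $\gamma(t)$ on an arc $[x,y]$ with $y\in C$, together with a direct verification via the median/tripod argument that $\bigcap_t\overline{V_t}=\{x\}$, which is exactly where the end-point hypothesis enters in both proofs. Your route is more self-contained (it uses only density of regular points in arcs, uniqueness of arcs, and compactness) at the price of the nested-compacta step and the median computation, while the paper's is a two-line citation of the order-theoretic characterisation. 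One cosmetic slip: your monotonicity argument establishes $V_{t_1}\se V_{t_2}$ for $t_1<t_2$, so the family $\overline{V_t}$ is \emph{increasing} in $t$ (decreasing as $t\to 0$), not ``decreasing in $t$'' as stated; this is harmless, since all the finite-intersection argument needs is that the family is totally ordered by inclusion with empty total intersection against $C$.
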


\begin{proof}
It follows from the definition of the Menger--Urysohn order that $x$ admits a system of open neighbourhoods whose topological boundary is reduced to a point (see also~\cite[9.3]{Nadler}). Since $X$ is a dendrite, the complement of any such neighbourhood is connected, and the result follows.
\end{proof}

We endow as usual the space $\Clo(X)$ of non-empty closed subsets of $X$ with the Vietoris topology, which is compact and metrisable~\cite[\S1]{Illanes-Nadler}.

\begin{lem}\label{subcont}
The map $C\mapsto[C]$ is continuous in $\Clo(X)$.
\end{lem}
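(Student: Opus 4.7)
The plan is to deduce the statement from continuity of the arc map $(x,y)\mapsto[x,y]$ from $X\times X$ to $\Clo(X)$, which I would prove first and regard as the key step.

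To prove continuity of the arc map, I would verify the two Kuratowski semicontinuities. Lower semicontinuity uses characterization~\ref{pt:sep}: for $z\in[x,y]$ with $z\neq x,y$, the point $z$ separates $x$ from $y$, and the components of $X\setminus\{z\}$ containing $x$ and $y$ are open by local connectedness, so for $x_n\to x$ and $y_n\to y$ we have $z\in[x_n,y_n]$ eventually. The upper bound is more delicate, and I would first record the auxiliary fact that whenever $V\subseteq X$ is connected and open and $x,y\in V$, then $[x,y]\subseteq V$: otherwise there would be an interior point $z\in[x,y]\setminus V$, and $z$ would separate $x,y$ in $X$ while $V\subseteq X\setminus\{z\}$ is connected and contains both, a contradiction. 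By local connectedness every point has a basis of connected open neighbourhoods of vanishing diameter, so this observation yields $[x,x_n]\to\{x\}$ whenever $x_n\to x$. Then the closed connected set $[x,x_n]\cup[x,y]\cup[y,y_n]$ contains both $x_n$ and $y_n$, hence contains $[x_n,y_n]$; since it tends to $[x,y]$ in Hausdorff distance, we conclude $\limsup_n[x_n,y_n]\subseteq[x,y]$.

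With the arc map continuous, the lemma follows. For $C_n\to C$ in $\Clo(X)$, continuity of arcs and compactness of $C\times C$ show that $[C]=\bigcup_{c,c'\in C}[c,c']$, the right-hand side being closed as the continuous image of the compact set $C\times C$. For lower semicontinuity of $[\cdot]$: any $p\in[c,c']\subseteq[C]$ is approximated by $p_n\in[c_n,c'_n]\subseteq[C_n]$, where $c_n,c'_n\in C_n$ are picked so that $c_n\to c$ and $c'_n\to c'$. For upper semicontinuity: any limit $p$ of a subsequence $p_{n_k}\in[C_{n_k}]$ is obtained from $p_{n_k}\in[c_k,c'_k]$ with $c_k,c'_k\in C_{n_k}$; extracting $c_k\to c$ and $c'_k\to c'$ forces $c,c'\in C$ since $C_{n_k}\to C$, and continuity of the arc map gives $p\in[c,c']\subseteq[C]$. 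The main difficulty throughout is the upper semicontinuity of the arc map---ruling out that $[x_n,y_n]$ bulges far from $[x,y]$---which the sandwich argument cleanly handles using the ``arcs stay inside connected opens'' principle characteristic of dendrites.
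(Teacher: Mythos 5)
Your proof is correct, but it follows a genuinely different route from the paper's. The paper metrises the Vietoris topology by the Hausdorff distance, checks continuity of $C\mapsto[C]$ directly on \emph{finite} sets $C$ by looking at the tree spanned by the union of two given finite sets, and then passes to arbitrary closed sets by invoking the uniform approximation of a dendrite by its finite subtrees (\cite[10.37]{Nadler}). You instead isolate the two-point case --- continuity of the arc map $(x,y)\mapsto[x,y]$ --- and prove it from scratch by the two Kuratowski semicontinuities, using the separation property of interior points of arcs for the lower bound and the ``arcs stay inside connected open sets'' principle together with the sandwich $[x_n,y_n]\subseteq[x,x_n]\cup[x,y]\cup[y,y_n]$ for the upper bound; you then bootstrap via the identity $[C]=\bigcup_{c,c'\in C}[c,c']$ and a subsequence extraction. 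Your argument is more self-contained (it avoids the cited approximation theorem entirely) at the cost of being longer; the paper's is a terse outsourcing of the same content. Two small points you should make explicit: the set $\bigcup_{c,c'\in C}[c,c']$ must be checked to be \emph{connected} (e.g.\ fix $c_0\in C$ and note $[c,c']\subseteq[c_0,c]\cup[c_0,c']$, so the union is a union of arcs through $c_0$) --- this is needed for the inclusion $[C]\subseteq\bigcup_{c,c'}[c,c']$, not just for closedness; and the closedness of that union rests on the standard fact that the union of a Vietoris-compact family of closed sets is closed, which deserves a one-line justification. Neither is a gap, just an omission.
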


\begin{proof}
Recall that the Vietoris topology can be metrised using the Hausdorff distance associated to a compatible distance on $X$. One then checks directly that the above map is continuous on finite subsets $C$ by considering the tree spanned by the union of two given finite subsets of $X$. In order to deduce the general case, one uses the uniform approximation of $X$ by trees~\cite[10.37]{Nadler}.
\end{proof}

Finally, we recall that dendrites have the fixed-point property:

\begin{lem}\label{lem:FP}
Every homeomorphism of a dendrite fixes a point.
\end{lem}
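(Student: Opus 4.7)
My plan is to extract a minimal closed connected $f$-invariant subset of $X$ by Zorn's lemma, and then to show that any such set is necessarily a singleton. Consider the family $\sY$ of non-empty closed connected subsets $Y\se X$ with $f(Y)=Y$, ordered by reverse inclusion. For a decreasing chain, the intersection is non-empty by compactness, closed, connected as a nested intersection of continua, and $f$-invariant, so Zorn's lemma produces a minimal $Y\in\sY$. If $|Y|=1$ then its unique point is the desired fixed point and we are done.

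Suppose for contradiction that $Y$ is non-degenerate. Minimality forces $f|_Y$ to be fixed-point-free, since any fixed point would supply a strictly smaller member of $\sY$. The set $\Ends(Y)$ is preserved by $f$, being topologically characterised via the Menger--Urysohn order. If $Y$ is an arc, then $f$ either fixes or swaps its two end points, and the intermediate value theorem supplies a fixed point; thus $Y$ must have a branch point and hence at least three end points. If some end point $e$ has finite $f$-orbit, its closed convex hull is an $f$-invariant finite subtree of $Y$, and every homeomorphism of a finite tree fixes a vertex or the midpoint of an inverted edge, once again a contradiction.

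The main obstacle is the remaining case, in which every end point has infinite orbit. Here I would apply Zorn without the connectedness constraint to extract a minimal closed $f$-invariant subset $M\se Y$; by hypothesis $M$ is infinite and perfect, while its convex hull $[M]$ is closed, connected and $f$-invariant, and so equals $Y$ by the minimality of $Y$ in $\sY$. Lemma~\ref{lem:KM}, applied inside the sub-dendrite $Y$, then forces $\Ends(Y)\se M$. The subcontinuity of $C\mapsto[C]$ (Lemma~\ref{subcont}) applied to the map $x\mapsto[\{x,f(x)\}]$ on the compact set $M$ should then yield a sequence of arcs degenerating, in the Vietoris topology, to a single point, which is necessarily fixed by $f$. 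This last compactness-and-limit step, essentially the content of the classical Young--Plunkett fixed-point theorem for dendrites, is the delicate point of the argument.
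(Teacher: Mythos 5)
Your reductions are individually sound --- the Zorn extraction of a minimal invariant sub-dendrite $Y$, the arc case via the intermediate value theorem, the finite-orbit case via an invariant finite tree, and the identification $[M]=Y$ followed by $\Ends(Y)\se M$ via Lemma~\ref{lem:KM} --- but the argument collapses at exactly the step you flag as delicate, and that step is not a technical ``compactness-and-limit'' verification: it \emph{is} the theorem. Under your standing assumption that $f|_Y$ is fixed-point-free, compactness of $Y$ gives $\inf_{x\in Y} d(x,f(x))>0$ for any compatible metric, so \emph{no} sequence of arcs $[x_n,f(x_n)]$ can degenerate to a singleton in the Vietoris topology; conversely, if such a sequence existed you would obtain a fixed point immediately, without any of the preceding structure. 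Nothing you have established --- minimality of $Y$, perfectness of $M$, continuity of $C\mapsto[C]$ from Lemma~\ref{subcont} --- produces such a sequence, and appealing at this point to an external ``fixed-point theorem for dendrites'' is circular, since that is precisely the statement being proved. So the elaborate reduction buys nothing: the entire content of the lemma is concentrated in the unproven last sentence.

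For what it is worth, the paper itself does not give a direct dynamical argument: it cites Scherrer (1925), and notes that the statement also follows from $X$ being an absolute retract together with Schauder's theorem, or from a planar embedding of $X$ together with fixed-point results predating Brouwer's. If you want a self-contained proof, the standard route is different from yours: fix $p\in X$, set $A=\{x\in X : x\in[p,f(x)]\}$, note $p\in A$, show that every chain in the cut-point order rooted at $p$ has its supremum in $A$ (this is where Lemma~\ref{subcont} genuinely earns its keep), and check that a maximal element $x^*$ of $A$ must be fixed: otherwise $x^*$ separates $p$ from the component $c$ containing $f(x^*)$, and any $y\in c$ on $[x^*,f(x^*)]$ sufficiently close to $x^*$ again satisfies $y\in[p,f(y)]$, contradicting maximality. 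That argument works for arbitrary continuous self-maps, not only homeomorphisms.
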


\begin{proof}
The first occurrence of this statement is probably~\cite[V, p.~129]{Scherrer25}.

Since $X$ is an absolute retract~\cite[\S48 III 16]{Kuratowski_T2}, this can also be viewed as a consequence of Schauder's theorem~\cite{Schauder30}, or even of a fixed-point result predating Brouwer's, such as~\cite[p.~186]{Bohl04}, because $X$ can be embedded in the plane~\cite[p. ~9]{WazewskiPHD}.
\end{proof}

As the above references show, this holds more generally for continuous self-maps. We recall however that there are \emph{tree-like} continua for which the fixed-point property fails for continuous self-maps~\cite{Bellamy80} and even for homeomorphisms~\cite{Fugate-Mohler}, \cite{Oversteegen-Rogers}.

\section{Elementarity of actions on dendrites}\label{sec:elem}
All group actions on dendrites will be understood to be by homeomorphisms. If a group $G$ has a topology, the action on $X$ is called continuous when the action map $G\times X \to X$ is so.

\begin{defn}
An action of a group $G$ on a dendrite $X$ is \emph{elementary} if $G$ fixes a point in $X$ or preserves a pair of points in $X$.
\end{defn}

There is not much to say about elementary actions; any countable group admits an action on a dendrite which is free away from a single fixed point.

This is a notion of triviality for actions that could have been defined in several other equivalent ways:

\begin{prop}\label{prop:elementary}
Let $G$ be a group acting on a dendrite $X$. The following are equivalent.
\begin{enumerate}[label=(\roman*)]
\item The $G$-action is elementary.\label{pt:el:el}
\item $G$ preserves an arc in $X$, possibly reduced to a point.
\item $G$ has a finite orbit in $X$.\label{pt:el:fin}
\item There is a $G$-invariant probability measure on $X$.\label{pt:el:proba}
\end{enumerate}
\end{prop}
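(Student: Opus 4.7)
The plan is to prove the four conditions equivalent via the cycle (i)~$\Rightarrow$~(ii)~$\Rightarrow$~(iii)~$\Rightarrow$~(iv)~$\Rightarrow$~(i). The first three implications I would dispatch routinely: for (i)~$\Rightarrow$~(ii), a fixed point is a degenerate arc while an invariant pair $\{x,y\}$ spans the unique arc $[x,y]$ from characterisation~(vi), which is thereby $G$-invariant; for (ii)~$\Rightarrow$~(iii), an arc is homeomorphic to $[0,1]$ and its two endpoints --- topologically distinguished as the only points of order one inside the arc --- form a $G$-invariant set of size at most two; for (iii)~$\Rightarrow$~(iv), take the uniform probability measure on a finite orbit.

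All the substance is in (iv)~$\Rightarrow$~(i). Given a $G$-invariant $\mu\in\Prob(X)$, I would split on whether $\mu$ has an atom. In the atomic case, the set $F$ of atoms of maximal mass is $G$-invariant, finite and non-empty; the spanning sub-dendrite $[F]$ is then a $G$-invariant finite tree, and its classical Jordan centre --- a vertex or an edge extracted by iteratively pruning endpoints --- provides the desired fixed point or invariant pair. In the atomless case I would introduce the median set
\[
M_\mu \;=\; \bigl\{\, x \in X : \mu(C) \le \tfrac12\ \text{for every component}\ C\ \text{of}\ X\setminus\{x\}\,\bigr\},
\]
which is automatically $G$-invariant, and aim to show that $[M_\mu]$ is either a single point or an arc, so that its at most two endpoints --- lying in $\overline{M_\mu}$ by Lemma~\ref{lem:KM} applied inside the dendrite $[M_\mu]$ --- yield the sought elementary configuration.

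The structural step --- that $[M_\mu]$ is an arc or a point --- is the easier of the two sub-goals. If $[M_\mu]$ had a branch point $q$, then each of the (at least three) components of $[M_\mu]\setminus\{q\}$ would meet $M_\mu$ (otherwise one could discard that component and contradict the minimality of $[M_\mu]$), giving $a,b,c\in M_\mu$ with common median $q\notin\{a,b,c\}$. The three components $A,B,C$ of $X\setminus\{q\}$ containing $a,b,c$ respectively are pairwise disjoint; since the component of $X\setminus\{a\}$ containing $q$ covers $X\setminus A$, the condition $a\in M_\mu$ forces $\mu(A)\ge\tfrac12$, and likewise $\mu(B),\mu(C)\ge\tfrac12$, which is incompatible with $\mu(A)+\mu(B)+\mu(C)\le 1$. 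The non-emptiness of $M_\mu$ is the main obstacle. Assuming it fails, atomlessness furnishes for each $x$ a \emph{unique} component $C_x$ of $X\setminus\{x\}$ with $\mu(C_x)>\tfrac12$; the closed connected sets $\overline{C_x}=C_x\cup\{x\}$ all exceed measure $\tfrac12$ and hence pairwise intersect, so Lemma~\ref{Helly} produces a common point. The plan is then to iterate a descent $x_0,x_1,\ldots$ with $x_{n+1}\in C_{x_n}$ chosen deep enough to guarantee $x_n\notin C_{x_{n+1}}$, producing a strictly nested $\overline{C_{x_0}}\supsetneq\overline{C_{x_1}}\supsetneq\cdots$ whose compact intersection would contain a point of $M_\mu$ and contradict the assumption. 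Arranging this ``deep enough'' choice --- bypassing the reciprocal configuration where both $x_n\in C_{x_{n+1}}$ and $x_{n+1}\in C_{x_n}$ prevent any shrinking --- is the delicate point where atomlessness enters, via projection onto the arc $[x_n,x_{n+1}]$ to push the mass on the $x_n$-side of $x_{n+1}$ below $\tfrac12$.
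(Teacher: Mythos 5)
Your cycle of implications and the easy steps (i)$\Rightarrow$(ii)$\Rightarrow$(iii)$\Rightarrow$(iv) are fine, and your treatment of the atomic case via the Jordan centre of $[F]$ is exactly what the paper does. In the atomless case you diverge from the paper: you work with a single median set $M_\mu$ (all components of mass $\le\tfrac12$) and show $[M_\mu]$ is an arc or a point, whereas the paper splits into two subcases --- the set $E_\mu$ of regular points with two components of mass exactly $\tfrac12$ (handled by the same tripod argument you use), and otherwise the intersection $\bigcap_x \overline{c_x}$ of the closures of the heavy components, which Lemma~\ref{Helly} shows is non-empty and which is a \emph{singleton} because any two distinct points are separated by a regular (hence null) point. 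That singleton is canonical, hence $G$-fixed, and the paper is done; no non-emptiness statement about a median set is ever needed.

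The gap in your version is precisely the non-emptiness of $M_\mu$. You correctly invoke Lemma~\ref{Helly} to get a common point $p\in\bigcap_x\overline{C_x}$, but then abandon it in favour of a descent $x_0,x_1,\dots$ with $\overline{C_{x_0}}\supsetneq\overline{C_{x_1}}\supsetneq\cdots$, concluding that the compact intersection ``would contain a point of $M_\mu$.'' That last step is unjustified and essentially assumes what you are proving: the intersection is merely a sub-dendrite of mass $\ge\tfrac12$ (the masses $\mu(\overline{C_{x_n}})$ need not decrease to $\tfrac12$, and even if they did, a sub-dendrite of mass $\tfrac12$ has no a priori reason to meet $M_\mu$); under your standing hypothesis $M_\mu=\varnothing$ every point of that intersection still has a heavy component, and pushing the measure forward to the intersection just reproduces the same situation there. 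Two repairs are available. Either follow the paper: $\bigcap_x\overline{C_x}$ is a singleton (separate two putative points by a point of zero mass) and is canonical, so you never need $M_\mu$ at all. Or keep $M_\mu$ and show directly that the Helly point $p$ lies in it: if some component $C$ of $X\setminus\{p\}$ had $\mu(C)>\tfrac12$, walk along an arc from $p$ into $C$; the components of $X\setminus\{x_t\}$ containing $p$ decrease, as $t\to 0$, to $X\setminus C$, which has mass $<\tfrac12$, so by downward continuity of the measure some $x_t\in C$ has its $p$-side component of mass $<\tfrac12$, contradicting $p\in C_{x_t}$. This is the ``projection onto the arc'' idea you gesture at, but it must be deployed to close the argument at $p$, not merely to choose the next step of a descent.
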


An immediate consequence of this proposition is the fact that every continuous action of an \emph{amenable} group on a dendrite is elementary, first proved in~\cite{ShiYe2016}.

\smallskip

The only non-trivial implication needed for the above equivalences, namely~\ref{pt:el:proba}$\Rightarrow$\ref{pt:el:el}, is a particular case of the following result, which will however be particularly useful for \emph{non-elementary} actions.

\begin{prop}\label{prob}
Let $X$ be a dendrite. There is a Borel $\Homeo(X)$-equivariant map
$$\fhi\colon\Prob(X)\longrightarrow\parts_{1,2}(X)$$
to the space $\parts_{1,2}(X)$ of subsets of cardinality $1$ or $2$ in $X$.
\end{prop}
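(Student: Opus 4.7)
The plan is to attach to each $\mu \in \Prob(X)$ a canonical \emph{median} set $M(\mu) \subseteq X$ that turns out to be a single point or an arc, and then set $\varphi(\mu) := \Ends(M(\mu))$. Concretely I would define
\[
M(\mu) := \{\, x \in X : \mu(C) \leq \tfrac12 \text{ for every connected component } C \text{ of } X \setminus \{x\} \,\}.
\]
It will help to use the equivalent reformulation $M(\mu) = \bigcap\{Y : Y \text{ closed sub-dendrite of } X,\ \mu(Y) > \tfrac12\}$. One direction is immediate: if $x \notin Y$ with $Y$ closed connected, then $Y$ lies in a single component of $X \setminus \{x\}$, contradicting $x \in M(\mu)$. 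Conversely, if a component $C$ of $X \setminus \{x\}$ carries mass $> \tfrac12$, inner regularity supplies a compact $K \subseteq C$ with $\mu(K) > \tfrac12$; since $x$ is an endpoint of the sub-dendrite $\overline{C} = C \cup \{x\}$, Lemma~\ref{lem:KM} applied inside $\overline{C}$ yields $[K] \subseteq C$, producing a closed sub-dendrite of mass $> \tfrac12$ disjoint from $x$.

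Non-emptiness of $M(\mu)$ follows from Lemma~\ref{Helly}: two closed sub-dendrites of mass $>\tfrac12$ must meet, so the family has the pairwise intersection property and Helly gives the conclusion. I would next check that $M(\mu)$ is \emph{convex}: if $x, y \in M(\mu)$ and $z \in [x, y]$, then any component $C$ of $X \setminus \{z\}$ must lie in a single component either of $X \setminus \{x\}$ or of $X \setminus \{y\}$ depending on which of $x, y$ sits on the opposite side of $z$, and in either case $\mu(C) \leq \tfrac12$. Hence $M(\mu)$ is a closed, convex, non-empty sub-dendrite of $X$.

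The crucial structural step is that $M(\mu)$ admits no branch point. Suppose $b \in M(\mu)$ and $p_1, p_2, p_3 \in M(\mu)$ lie in three distinct components $C_1, C_2, C_3$ of $X \setminus \{b\}$. Then $b \in M(\mu)$ gives $\mu(C_i) \leq \tfrac12$. On the other hand, the set $X \setminus C_i$ is closed and connected (it is the union of the closures $\overline{C_\alpha} = C_\alpha \cup \{b\}$ over $\alpha \neq i$, all sharing the point $b$), contains $b$, and avoids $p_i$; hence it sits in the component of $X \setminus \{p_i\}$ containing $b$, and $p_i \in M(\mu)$ forces $1 - \mu(C_i) \leq \tfrac12$, i.e.\ $\mu(C_i) \geq \tfrac12$. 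So $\mu(C_1) = \mu(C_2) = \mu(C_3) = \tfrac12$, contradicting $\mu(C_1) + \mu(C_2) + \mu(C_3) \leq 1$. Therefore $M(\mu)$ has Menger--Urysohn order $\leq 2$ at every point: either a single point or an arc. Setting $\varphi(\mu) := \Ends(M(\mu))$ gives a set of cardinality $1$ or $2$, and $\Homeo(X)$-equivariance is transparent from the definition.

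The main obstacle I expect is Borel measurability of $\varphi$. My plan is to verify that the graph $\{(\mu, x) \in \Prob(X) \times X : x \in M(\mu)\}$ is Borel. For each fixed $x$ the slice is closed in $\Prob(X)$ by lower semicontinuity of $\mu \mapsto \mu(U)$ on open sets $U$, but joint measurability is more delicate: the cleanest route seems to use the sub-dendrite reformulation together with Lemma~\ref{subcont}, which makes $A \mapsto [A]$ continuous and allows one to test the condition "$x \in M(\mu)$" against the countable family of hulls $[A]$ for $A$ ranging over finite subsets of a fixed countable dense set $D \subseteq X$. Once $\mu \mapsto M(\mu)$ is Borel into $\Clo(X)$, extracting the one or two endpoints of an arc or a point is Borel in the Vietoris topology, completing the construction of $\varphi$.
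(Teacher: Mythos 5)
Your median-set construction is correct, but it is a genuinely different route from the paper's. The paper proceeds by cases: for atomic $\mu$ it takes the Jordan centre of the finite tree spanned by the atoms of maximal mass; for atom-free $\mu$ it either takes the extremities of the arc of regular points splitting the mass into two halves, or applies Lemma~\ref{Helly} to the closures of the heavy components to produce a singleton. Your set $M(\mu)$ handles all of this uniformly, and the three verifications you give are sound: non-emptiness (two sub-dendrites of mass $>\tfrac12$ must meet, then Helly), convexity, and the exclusion of branch points (three disjoint components would each be forced to have mass exactly $\tfrac12$, contradicting total mass $1$); the reformulation $M(\mu)=\bigcap\{Y:\mu(Y)>\tfrac12\}$ via inner regularity and Lemma~\ref{lem:KM} is also exactly right. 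What the uniform definition buys, beyond avoiding the case split, is that measurability becomes cleaner than you anticipate: the graph $\{(\mu,x):x\in M(\mu)\}$ is in fact \emph{closed} in $\Prob(X)\times X$ --- if $x\notin M(\mu)$, pick a compact $K\se C$ with $\mu(K)>\tfrac12$ in the heavy component $C$, enclose $[K]$ in a connected open $U$ with $x\notin\overline{U}$ (local connectedness), and use lower semicontinuity of $\nu\mapsto\nu(U)$ to exclude a whole neighbourhood of $(\mu,x)$ from the graph --- so $\mu\mapsto M(\mu)$ is upper semicontinuous into $\Con(X)$ and in particular Borel. This repair matters because the one weak spot in your sketch is the proposed countable test against hulls $[A]$ of finite subsets of a dense set: for a diffuse measure a finite tree $[A]$ can be $\mu$-null even when the component containing it has mass $>\tfrac12$, so that particular reduction would not detect $x\notin M(\mu)$. (The paper is equally terse on measurability, declaring it a straightforward verification.) Finally, note that your map need not coincide with the paper's --- for atomic measures the Jordan centre of the spanned tree can differ from the median of the measure --- but the proposition only asks for some canonical equivariant Borel map, so this is immaterial.
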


To be precise, we view $\parts_{1,2}(X)$ as a compact subset of the hyperspace $\Clo(X)$; this is none other than the Borsuk--Ulam symmetric product~\cite{Borsuk-Ulam} of two copies of $X$. As for $\Prob(X)$, it denotes the compact convex space of Borel probability measures on $X$ endowed with the usual weak-* topology (in the dual of the space of continuous functions).

\begin{proof}
Given a probability measure $\mu$ on $X$ we construct below an element of $\parts_{1,2}(X)$. The resulting map $\fhi$ will be canonical enough to be $\Homeo(X)$-equivariant by definition, and it will be explicit enough to be Borel by a straightforward verification (using also Lemma~\ref{subcont}). Therefore we only provide the construction.

Suppose first that $\mu$ has atoms. It then has finitely many atoms of maximal mass. It is therefore enough to construct a map $\parts_{\mathrm{f}}(X)\to \parts_{1,2}(X)$ on the space $\parts_{\mathrm{f}}(X) \se\Clo(X)$ of finite non-empty subsets. For any $A\in \parts_{\mathrm{f}}(X)$, the set $[A]$ is a tree; we work with trees without degree two vertices to remain well-defined topologically. There is a number of different well-know canonical ways to associate a notion of center to a finite tree. We choose Jordan's center~\cite{Jordan1869} because it is the most classical that we know of; it is indeed a set of one or two vertices of $[A]$.

\smallskip
We now consider the atom-free case and distinguish two subcases. Assume first that there exist regular points $x$ such that both components of $X\setminus \{x\}$ have measure~$1/2$. We claim that the set $E_\mu \se X$ of all such points $x$ lies in an arc; then, the extremities of $[E_\mu]$ yield the desired element of $\parts_{1,2}(X)$. If the claim did not hold true, there would be elements $x,y,z\in E_\mu$ forming a tripod, yielding three disjoint components of mass~$1/2$, which is impossible.

In the second subcase, every regular point $x\in X$ determines a component $c_x$ of $X\setminus \{x\}$ with mass~$>1/2$, recalling that $\mu(\{x\})=0$. We claim that the intersection $\bigcap  \overline{c_x}$, where $x$ ranges over all regular points of $X$, is a singleton. This will be our element of $\parts_{1,2}(X)$. To prove this claim, we observe that the closed connected sets $\overline{c_x}$ have pairwise non-empty intersection by virtue of their mass. Thus, by Lemma~\ref{Helly}, the intersection $\bigcap  \overline{c_x}$ is non-empty. Since any two distinct points are separated by a regular point, we proved the claim.
 \end{proof}

\section{Dendro-minimality}
Recall that any action on any compact space admits (possibly several) minimal invariant non-empty closed subsets. A similar application of Zorn's lemma shows that an action on a dendrite admits \emph{some} minimal invariant sub-dendrite. In fact, more is true:

\begin{lem}\label{lem:min}
Let $G$ be a group with a non-elementary action on a dendrite $X$.
\begin{enumerate}[label=(\roman*)]
\item There is a unique minimal $G$-invariant non-empty closed subset $M\se X$.\label{pt:min:top}
\item There is a unique minimal $G$-invariant sub-dendrite in $X$, namely $[M]$.\label{pt:min:den}
\end{enumerate}
\end{lem}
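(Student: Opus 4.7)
The plan is to prove (i) and (ii) together in three stages, using non-elementarity throughout.

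I first establish the existence and uniqueness of a minimal $G$-invariant sub-dendrite $Y \subseteq X$. Existence follows from a standard Zorn's lemma argument: the collection of $G$-invariant sub-dendrites contains $X$, and for a decreasing chain the intersection is $G$-invariant, closed, non-empty by compactness, and connected, because for any two of its points the arc joining them lies in every member (using characterisation~(iii)). For uniqueness, if $Y_1$ and $Y_2$ were two distinct minimal $G$-invariant sub-dendrites, then $Y_1\cap Y_2$ would have to be empty: otherwise it would be a strictly smaller $G$-invariant sub-dendrite, contradicting minimality. But two disjoint sub-dendrites $A,B$ admit a canonical ``bridge'' pair $\{a_0,b_0\}$ with $a_0\in A$, $b_0\in B$: for any $b\in B$ the arc from $b$ first meets $A$ at a well-defined gate $\pi_A(b)\in A$, and a short argument using the median of $\{b,b',\pi_A(b')\}$ (which must lie in $B$, hence outside $A$) shows that $\pi_A$ is constant on $B$. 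The resulting $G$-invariant pair contradicts non-elementarity.

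Next I show that every minimal closed $G$-invariant subset $M \subseteq X$ (which exists by Zorn plus compactness) satisfies $[M]=Y$. Applying Zorn inside $[M]$ and invoking the uniqueness from the first stage, one gets $Y\subseteq[M]$. The set $Y\cap M$ is closed and $G$-invariant, so it equals $M$ or $\varnothing$. If $M\subseteq Y$ then $[M]\subseteq Y$ and we are done. Otherwise $M\subseteq[M]\setminus Y$, which is open, so by compactness $M$ meets only finitely many connected components $U_1,\dots,U_n$ of $[M]\setminus Y$. Since $Y$ is a sub-dendrite of $[M]$, each $\overline{U_i}\cap Y$ is a single point $p_i$: two distinct points in this intersection would span an arc in $Y$ whose interior separates $[M]$ and would trap the connected set $U_i$ on one side, making the opposite approach impossible. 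The finite $G$-invariant set $\{p_1,\dots,p_n\}$ then contradicts non-elementarity via Proposition~\ref{prop:elementary}.

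Finally, uniqueness of $M$ is quick. If $M_1,M_2$ are two minimal closed $G$-invariant sets they are disjoint by minimality, and by the previous step $[M_1]=[M_2]=Y$. Applying Lemma~\ref{lem:KM} inside the dendrite $Y$ with closed subset $M_i$, every endpoint of $Y$ lies in $M_i$, so $\Ends(Y)\subseteq M_1\cap M_2=\varnothing$, contradicting the existence of endpoints in every dendrite. This concludes~(i); the identification $[M]=Y$ from the second stage, combined with the uniqueness of $Y$ from the first, then yields~(ii). I expect the main obstacle to lie in the second stage, where one must exclude the possibility that a minimal closed invariant set sits outside the minimal invariant sub-dendrite $Y$. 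That step relies on the characteristically dendrite-theoretic fact that complementary components of a sub-dendrite have a unique boundary point, together with compactness of $M$. The bridge construction and the final endpoint cleanup are, by comparison, fairly mechanical once acyclicity and Lemma~\ref{lem:KM} are in hand.
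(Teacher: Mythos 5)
Your proof is correct, but it takes a genuinely different route from the paper, which disposes of part~(i) by citing the external result \cite[4.1]{Marzougui-Naghmouchi} and then notes that~(ii) follows formally (any invariant sub-dendrite contains some minimal closed invariant set, hence contains $M$, hence contains $[M]$). You instead work from scratch and in the opposite order: you first get the unique minimal invariant sub-dendrite $Y$ via Zorn plus the first-point retraction onto one of two disjoint invariant sub-dendrites --- this is exactly the mechanism the paper sketches in the Remark \emph{after} the lemma, where it is used to drop the non-elementarity hypothesis for sub-dendrites --- and then derive~(i) from~(ii). The two substantive ingredients you add are genuinely needed for this reversal and are both sound: the exclusion of a minimal closed invariant set $M$ living in $[M]\setminus Y$, via the finitely many complementary components each meeting $Y$ in a single boundary point (yielding a finite invariant set, contradicting Proposition~\ref{prop:elementary}); and the final disjointness contradiction via Lemma~\ref{lem:KM}, which forces $\Ends(Y)\se M_1\cap M_2$. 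What your approach buys is a self-contained proof independent of the cited reference; what it costs is length, and a couple of spots where you should be slightly more careful: the connectedness of a chain intersection follows from the arc argument you give (each member, being a sub-dendrite, contains $[x,y]$), not really from characterisation~(iii), and in the last stage you should note that $Y$ is not a point (a point would be $G$-fixed) so that $\Ends(Y)\neq\varnothing$. Neither issue is a real gap.
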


\begin{proof}
The first point is proved in~\cite[4.1]{Marzougui-Naghmouchi} and~\ref{pt:min:den} follows from~\ref{pt:min:top}.
\end{proof}

\begin{rem}
In fact, there is a unique minimal $G$-invariant sub-dendrite in $X$ as soon as $G$ has no global fixed points. Indeed, minimal $G$-invariant sub-dendrites are necessarily disjoint, and the \emph{first-point retraction} (see 10.24 and~10.25 in~\cite{Nadler}) would map any invariant sub-dendrite to a fixed point in any disjoint invariant sub-dendrite. Thus we see that there is still a canonical minimal $G$-set when there are no $G$-fixed points: in the elementary case, take the extremities of the unique minimal invariant arc.

There are however such examples without uniqueness of minimal sets. The simplest is provided by the following action of the infinite dihedral group $D\cong \ZZ\rtimes \{\pm 1\}$ on the H-shaped dendrite. The $D$-action on the interior of the horizontal rung of H is conjugated to its standard isometric action on the real line, while the ends of H are permuted according to the appropriate quotient map from $D$ to the Vierergruppe~\cite[pp.~12--13]{Klein84}.
\end{rem}

Classically, an action on a space $M$ without invariant closed proper subsets is called (topologically) \emph{minimal}. There is of course no reason that $M$ should be a dendrite; therefore, we shall need to consider rather the case where the action is \emph{dendro-minimal}, that is, does not admit an invariant proper sub-dendrite.

\begin{lem}\label{lem:dis}
Let $G$ be a group with a dendro-minimal action on a dendrite $X$. For any proper sub-dendrite $Y\se X$, one can find $g\in G$ such that $gY\cap Y=\varnothing$.
\end{lem}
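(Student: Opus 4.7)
The plan is to argue by contradiction: suppose that $gY \cap Y \neq \varnothing$ for every $g \in G$. Since $g_1 Y \cap g_2 Y = g_1(Y \cap g_1^{-1}g_2 Y)$, this assumption is equivalent to requiring that any two translates in the orbit $\{gY : g \in G\}$ meet.

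I then consider $Z = \bigcap_{g \in G} gY$. It is closed, $G$-invariant, and contained in $Y$; I would show that it is moreover a non-empty sub-dendrite, which then contradicts dendro-minimality since $Y \subsetneq X$. For each finite $F \subseteq G$, set $Z_F = \bigcap_{g \in F} gY$. Applying Lemma~\ref{Helly} to the closed connected translates indexed by $F$ (which pairwise meet by the previous paragraph), $Z_F$ is non-empty. Connectedness of $Z_F$ follows by induction on $|F|$ from the characterisation~\ref{pt:intersect} of dendrites: the intersection of any two connected subsets is connected, and so $Z_F \cap gY$ is connected whenever $Z_F$ is.

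The collection $\{Z_F\}$ is directed under reverse inclusion, and $Z = \bigcap_F Z_F$. A filtered intersection of non-empty closed connected subsets of a compact Hausdorff space is non-empty, closed and connected (if it decomposed as $A \sqcup B$ with $A,B$ disjoint closed non-empty, the normal-space argument would trap some $Z_F$ inside one of two separating open sets, contradicting $Z \subseteq Z_F$). Hence $Z$ is a $G$-invariant sub-dendrite contained in the proper sub-dendrite $Y$, contradicting dendro-minimality.

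I do not anticipate any serious obstacle. The only delicate ingredient is the connectedness of $Z$, which rests precisely on the ``tree-like'' acyclicity~\ref{pt:intersect} of dendrites — the same feature that distinguishes them from more general one-dimensional continua.
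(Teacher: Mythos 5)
Your proof is correct and follows essentially the same route as the paper: assume all translates pairwise meet, apply Lemma~\ref{Helly} to conclude that $\bigcap_{g\in G} gY$ is a non-empty $G$-invariant proper sub-dendrite, contradicting dendro-minimality. The only difference is that you re-derive the passage from finite to arbitrary families and the connectedness of the intersection by hand; note that Lemma~\ref{Helly} is already stated for arbitrary collections, so most of that extra work is subsumed by citing it directly.
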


\begin{proof}
Otherwise it follows that $gY\cap g'Y \neq \varnothing$ for all $g,g'\in G$. By Lemma~\ref{Helly}, the intersection of all $G$-translates of $Y$ is non-empty; this intersection is a proper $G$-invariant sub-dendrite, which is absurd.
\end{proof}

\begin{lem}\label{lem:dense}
Let $G$ be a group with a dendro-minimal action on a dendrite $X$. Then the closure of any $G$-orbit contains $\Ends(X)$.
\end{lem}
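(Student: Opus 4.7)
The plan is to combine dendro-minimality with the end-point separation property from Lemma~\ref{lem:KM}. Fix a point $x\in X$ and set $C=\overline{Gx}$, which is a non-empty closed $G$-invariant subset since $G$ acts by homeomorphisms. Consider $[C]$, the smallest closed connected subset of $X$ containing $C$; by one of the equivalent definitions of dendrites, $[C]$ is well-defined, and it is a sub-dendrite of $X$.

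The next step is to observe that $[C]$ is itself $G$-invariant: for any $g\in G$, the set $g[C]$ is a closed connected subset of $X$ containing $gC=C$, so by the minimality defining $[C]$ we have $[C]\subseteq g[C]$, and symmetrically the reverse inclusion, giving $g[C]=[C]$. Since $[C]$ is a $G$-invariant sub-dendrite and the action is dendro-minimal, we conclude $[C]=X$.

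Finally, take any $e\in \Ends(X)$. Then $e\in X=[C]$, so Lemma~\ref{lem:KM} applied to the closed set $C=\overline{Gx}$ yields $e\in C$. This shows $\Ends(X)\subseteq \overline{Gx}$, as desired.

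The argument is essentially immediate once one has Lemma~\ref{lem:KM} and the canonicity of $[\,\cdot\,]$ at hand; the only point worth verifying carefully is the equivariance $g[C]=[C]$, which uses nothing more than the uniqueness in the definition of $[\,\cdot\,]$. No serious obstacle arises.
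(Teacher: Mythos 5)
Your proof is correct and follows exactly the paper's argument: set $C=\overline{Gx}$, use dendro-minimality to get $[C]=X$, and conclude via Lemma~\ref{lem:KM}. The equivariance check $g[C]=[C]$ that you spell out is the only detail the paper leaves implicit, and your verification of it is fine.
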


In particular, in the non-elementary dendro-minimal case, the unique minimal set contains $\Ends(X)$.

\begin{proof}[Proof of the lemma]
Let $x\in X$ and $C=\overline{Gx}$. By dendro-minimality, $[C]=X$. It suffices now to apply Lemma~\ref{lem:KM}.
\end{proof}

Thanks to the fixed-point property of dendrites, Lemma~\ref{lem:min} has the following consequence.

\begin{lem}\label{lem:com}
Let $G$ be a group with a non-elementary action on a dendrite $X$ and let $M$ be the unique minimal subset of Lemma~\ref{lem:min}.

Then any homeomorphism of $X$ commuting with $G$ fixes $M$ point-wise.
\end{lem}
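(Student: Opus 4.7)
The plan is to prove $\Fix(h)\supseteq M$ for any homeomorphism $h$ of $X$ commuting with $G$; the whole strategy rests on three earlier results, namely the fixed-point property (Lemma~\ref{lem:FP}), the endpoint-selection Lemma~\ref{lem:KM}, and the density Lemma~\ref{lem:dense}. First I would observe that $h$ must preserve both $M$ and $[M]$: because $h$ commutes with $G$, the sets $hM$ and $h[M]$ are again closed $G$-invariant (the latter also connected), so the uniqueness parts of Lemma~\ref{lem:min} force $hM=M$ and $h[M]=[M]$. Consequently $h$ restricts to a homeomorphism of the dendrite $[M]$, and Lemma~\ref{lem:FP} supplies a fixed point in $[M]$.

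Next I would consider $F:=\Fix(h)\cap [M]$, which is closed, non-empty, and $G$-invariant (since $\Fix(h)$ is $G$-invariant, again thanks to the commutation of $h$ with $G$). Now $[F]$ is a non-empty $G$-invariant sub-dendrite of $[M]$. By Lemma~\ref{lem:min}\ref{pt:min:den} the $G$-action on $[M]$ is dendro-minimal, so we must have $[F]=[M]$; Lemma~\ref{lem:KM} then upgrades this to $\Ends([M])\subseteq F$, i.e., $h$ fixes every end point of $[M]$.

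To finish, I would apply Lemma~\ref{lem:dense} to the dendro-minimal action on $[M]$: every $G$-orbit closure in $[M]$ contains $\Ends([M])$, and in particular $M\supseteq\Ends([M])$ because $M$ coincides with the closure of any of its $G$-orbits. Hence $\Fix(h)\cap M$ is a non-empty closed $G$-invariant subset of $M$, and the minimality statement of Lemma~\ref{lem:min}\ref{pt:min:top} forces $\Fix(h)\cap M=M$, as desired.

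The only delicate point will be recognising how Lemmas~\ref{lem:KM} and~\ref{lem:dense} combine into a two-step bridge: the former propagates a fixed point inside $[M]$ outward to every end point of $[M]$, the latter then places those end points inside the smaller set $M$. Once this bridge is observed, the existence of the initial fixed point together with the two uniqueness assertions of Lemma~\ref{lem:min} closes the argument with no further computation.
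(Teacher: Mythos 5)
Your argument is correct, but it is considerably more roundabout than the paper's, which is a three-line observation: $\Fix(h)$ is non-empty by Lemma~\ref{lem:FP} applied to $h$ on all of $X$, it is closed, and it is $G$-invariant because $h$ commutes with $G$; since every non-empty closed $G$-invariant set contains a minimal one, uniqueness in Lemma~\ref{lem:min}\ref{pt:min:top} forces $M\subseteq\Fix(h)$ at once. You instead first locate a fixed point of $h$ inside $[M]$, propagate it to all of $\Ends([M])$ via dendro-minimality of $[M]$ and Lemma~\ref{lem:KM}, and then use Lemma~\ref{lem:dense} to push $\Ends([M])$ into $M$ before invoking minimality. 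Every step of this checks out (the ``bridge'' you describe is exactly the remark following Lemma~\ref{lem:dense}), but none of it is needed: the initial fixed point does not have to lie in $[M]$, and no propagation to end points is required, because $\Fix(h)$ being non-empty \emph{somewhere} in $X$ already suffices once you know it is closed and invariant. One small point of hygiene in your first paragraph: to get $hM=M$ from uniqueness you should note that $hM$ is not merely closed and $G$-invariant but \emph{minimal} (conjugation by $h$ permutes the minimal closed invariant sets) --- though in the short proof this step is not needed at all. What your longer route buys is the explicit intermediate statement that $h$ fixes $\Ends([M])$ pointwise, but since $\Ends([M])\subseteq M$ anyway, this is already contained in the conclusion.
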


\begin{proof}
Let $h$ be a homeomorphism of $X$ commuting with $G$. Then $G$ preserves the closed subset of $h$-fixed points, which is non-empty by Lemma~\ref{lem:FP}. Therefore, this set contains $M$.
\end{proof}

Lemma~\ref{lem:com} immediately implies the following.

\begin{cor}\label{cor:product}
When a direct product of two groups acts on a dendrite, at least one of the factors acts elementarily.\qed
\end{cor}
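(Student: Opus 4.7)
The plan is a short proof by contradiction that simply unpacks Lemma~\ref{lem:com}. Write $G = G_1 \times G_2$ acting on a dendrite $X$, and assume for contradiction that both factors act non-elementarily.

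First, I would apply Lemma~\ref{lem:min}\ref{pt:min:top} to the $G_1$-action to obtain the unique $G_1$-minimal non-empty closed subset $M_1 \subseteq X$. This is where non-elementarity of $G_1$ is used.

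Second, every element $h \in G_2$ is a homeomorphism of $X$ commuting with the $G_1$-action, since $G_1$ and $G_2$ commute inside $G$. Lemma~\ref{lem:com}, applied to the non-elementary $G_1$-action, then tells us that every such $h$ fixes $M_1$ pointwise. Pick any point of the non-empty set $M_1$: it is fixed by all of $G_2$, which contradicts the assumed non-elementarity of the $G_2$-action.

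There is no genuine obstacle here; the author already flags the corollary as immediate. The only thing to notice is the asymmetric use of the hypotheses: non-elementarity of one factor produces the canonical invariant set $M_1$ via Lemma~\ref{lem:min}, and then the commutation hypothesis converts $M_1$ into a fixed-point set for the other factor via Lemma~\ref{lem:com}.
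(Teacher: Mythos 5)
Your proof is correct and is exactly the argument the paper intends: the corollary is stated as an immediate consequence of Lemma~\ref{lem:com}, and your unpacking (non-elementarity of $G_1$ gives the canonical minimal set $M_1$ via Lemma~\ref{lem:min}, which every element of the commuting factor $G_2$ must fix pointwise) is the intended one-line deduction.
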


\begin{rem}
There is a special case where (topological) minimality and dendro-minimality do coincide. A \emph{free arc} is an arc (not reduced to a point) whose interior is open in $X$; for dendrites, this is equivalent to asking that this interior does not meet $\Br(X)$. For any dendrite $X$ not reduced to a point, the following conditions are equivalent:

\smallskip
\begin{enumerate}[label=(\roman*)]
\item $X$ has no free arc.\label{pt:free:arc}
\item $\Br(X)$ is dense in $X$.\label{pt:Br:dense}
\item $\Ends(X)$ is dense in $X$.\label{pt:Ends:dense}
\end{enumerate}

\noindent
(The implications \ref{pt:free:arc}$\Rightarrow$\ref{pt:Br:dense} and \ref{pt:Ends:dense}$\Rightarrow$\ref{pt:free:arc} follow readily from the definitions and \ref{pt:Br:dense}$\Rightarrow$\ref{pt:Ends:dense} can be found e.g.\ in~\cite[Prop.~2.3]{Charatonik91}.)

\smallskip
Now Lemma~\ref{lem:dense} implies that minimality and dendro-minimality are equivalent for any action on a dendrite satisfying the above three equivalent conditions.
\end{rem}

\section{A Tits alternative for dendrites}\label{sec:Tits}
We will apply the following well-known variant of Klein's Ineinanderverschiebungsprozess (cf.\ III \S16,18 in~\cite{Klein83}), also described by the less precise, but less sesquipedalian, terminology ``ping-pong lemma'':

Let $a,b$ be two elements of a group $G$ acting on a set $X$ and suppose that $X$ contains four non-empty disjoint sets $A_\pm, B_\pm$ such that
$$a^{\pm 1} (X\setminus A_\mp) \se A_\pm, \kern 5mm b^{\pm 1} (X\setminus B_\mp) \se B_\pm.$$
Then $a,b$ are free generators of a free subgroup of $G$. (This criterion can be easily deduced from the statements given e.g.\ in~\cite[II.24]{Harpe_GT} or~\cite{MacBeat63}.)

\medskip
\begin{proof}[Proof of Theorem~\ref{thm:Tits}]
It suffices to consider the case where the $G$-action on a dendrite $X$ is non-elementary and dendro-minimal. In particular, we can choose two points $x,y\in X$ such that the arc $[x,y]$ contains at least some branch point $r\neq x,y$. We further choose regular points $p\in [x,r]$ and $q\in [r,y]$ with $p\neq x$ and $q\neq y$.

We use the following notation: given two distinct points $s,t\in X$, we write $U_s(t)$ for the sub-dendrite defined as the closure in $X$ of the component of $X\setminus\{s\}$ containing $t$.

Since $p$ is regular, $U_p(x)$ and $U_p(y)$ cover $X$. Thus, by Lemma~\ref{lem:dis}, there is $g\in G$ such that $g (U_p(y))\se U_p(x)$. Likewise, there is $h\in G$ such that $h (U_q(x))\se U_q(y)$. Using $U_p(x)\se U_q(x)$, we deduce
$$hg (U_p(y))\se  U_q(y).$$
If we set $a=hg$, $A_-=U_p(x)$ and $A_+ = a(X\setminus A_-)$, then the two conditions $a^{\pm 1} (X\setminus A_\mp ) = A_\pm$ hold by definition. Moreover, $A_-$ and $A_+$ are disjoint since $A_+ \se a(U_p(y))$ which lies in $U_q(y)$ by the above discussion.

We now intend to define $b=f a f\inv$ as a conjugate of $a$ by an appropriate $f \in G$, so that the corresponding properties will automatically hold for $B_\pm = f(A_\pm)$. The only requirement to secure is that the resulting two sets $B_\pm$ are disjoint from both $A_\pm$.

To this end, denote by $Y\se X$ the sub-dendrite obtained as the union of $U_p(x)$, $U_q(y)$ and $[p,q]$. By construction, $Y$ contains $A_\pm$. Since we have made sure that $[p,q]$ contains some branch point $r$, the sub-dendrite $Y$ is not all of $X$. Therefore, by Lemma~\ref{lem:dis}, there is $f\in G$ that  $f(Y)\cap Y=\varnothing$. This completes the proof.
\end{proof}

\begin{cor}
Any torsion group acting on a dendrite has a fixed point.
\end{cor}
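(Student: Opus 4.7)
The plan is to derive the corollary from Theorem~\ref{thm:Tits}. A torsion group contains no element of infinite order and in particular no non-abelian free subgroup, so the $G$-action on the dendrite $X$ must be elementary. If $G$ fixes a point we are done; otherwise $G$ preserves a pair $\{x,y\}$ of distinct points, and since $[x,y]$ is the unique sub-dendrite spanning $\{x,y\}$, the group $G$ preserves the arc $A=[x,y]$, which is homeomorphic to $[0,1]$. This reduces the problem to showing that any torsion subgroup of $\Homeo(A)$ has a fixed point in $A$.

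For the reduced problem, let $K\le G$ be the (index at most two) subgroup of elements acting on $A$ in an orientation-preserving way. I claim that every element of $K$ acts trivially on $A$: such an element $h$ is, up to conjugation, a monotone increasing self-homeomorphism of $[0,1]$ of finite order $n$; if $h(t)>t$ for some $t$, then monotonicity inductively gives $h^k(t)>t$ for all $k\ge 1$, contradicting $h^n=\mathrm{id}$, and $h(t)<t$ is ruled out symmetrically, so $h=\mathrm{id}$.

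If $K=G$, any point of $A$ is a $G$-fixed point. Otherwise, any $g\in G\setminus K$ reverses orientation on $A$ and satisfies $g^2\in K$, so $g^2$ acts as the identity on $A$; thus $g$ is an orientation-reversing involution of $A$, which by the intermediate value theorem applied to $t\mapsto g(t)-t$ has a unique fixed point $c\in A$. Since $K$ fixes every point of $A$ and $g$ fixes $c$, the point $c$ is $G$-fixed, as required. The only substantive ingredient beyond the Tits alternative is the elementary classification of torsion homeomorphisms of the interval; everything else is bookkeeping.
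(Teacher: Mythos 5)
Your proof is correct and follows essentially the same route as the paper: apply the Tits alternative to get elementarity, reduce to an invariant arc, and use the fact that a finite-order orientation-preserving homeomorphism of an interval is trivial, so the action factors through a group of order at most two and fixes a point. You have simply spelled out the details (the monotonicity argument and the intermediate value theorem for the orientation-reversing involution) that the paper leaves as well known.
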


\begin{proof}
It is well-known (and obvious) that the group of orientation-preserving homeomorphisms of an arc has no element of finite order. Therefore, if $G$ is torsion, any $G$-action on an arc factors through a group of order at most two and hence fixes a point. The result now follows since Theorem~\ref{thm:Tits} shows that $G$ can only act elementarily on a dendrite.
\end{proof}

\section{Co-amenable subgroups}\label{sec:coamen}
Let $G$ be a topological group. A subgroup $H$ of $G$ is \emph{co-amenable} if any continuous affine $G$-action on a convex compact set (in a Hausdorff locally convex topological vector space) has a fixed point whenever it has an $H$-fixed point. See~\cite{Eymard72} for equivalent definitions in the locally compact case. A basic example is when $H$ is a lattice in $G$.

\smallskip
For \emph{normal} subgroups, co-amenability is simply the amenability of the quotient; in general, the situation is much more interesting. For instance, consider the following property of a group $G$: \itshape The group $G$ admits two commuting co-amenable subgroups\upshape.

\smallskip
It was proved in~\cite[\S 2.A]{CM09} that this condition implies all the known consequences of amenability in the \cat setting (in particular it reduces to amenability in the linear case). However, they are many non-amenable groups with this property. For instance, for any group $Q$, the wreath product $G=Q\wr \ZZ$ enjoys this property. This is because the two subgroups $\bigoplus_{\ZZ>0} Q$ and  $\bigoplus_{\ZZ<0} Q$ are co-amenable~\cite{Monod-Popa}. The non-amenable groups of piecewise projective homeomorphisms of the line introduced in~\cite{Monod_PNAS} also have this property, as does the undecided Thompson group $F$ (see e.g.~\cite[\S 2.A]{CM09}).

\medskip

The proof of Theorem~\ref{thm:coamen} hinges on the following fact.

\begin{lem}\label{coamen}
Let $G$ be a group with a continuous action on a dendrite and with a co-amenable subgroup $H$. Then the $H$-action is elementary if and only if the $G$-action is so.
\end{lem}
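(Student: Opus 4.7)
The plan is to use Proposition~\ref{prop:elementary} to recast elementarity as the existence of an invariant probability measure, and then to apply the very definition of co-amenability to the induced affine $G$-action on $\Prob(X)$.

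The implication ``$G$ elementary $\Rightarrow$ $H$ elementary'' is immediate: a $G$-fixed point or a $G$-invariant pair is automatically preserved by the subgroup $H$.

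For the main direction, I would assume the $H$-action is elementary and invoke the equivalence \ref{pt:el:el}$\Leftrightarrow$\ref{pt:el:proba} of Proposition~\ref{prop:elementary} to produce an $H$-invariant probability measure $\mu$ on $X$. I would then view $\Prob(X)$ as a convex compact subset of $C(X)^*$ endowed with the weak-* topology, which is a Hausdorff locally convex space. The $G$-action on $\Prob(X)$ is affine (pushforward is linear), and $\mu$ is an $H$-fixed point. Provided the $G$-action on $\Prob(X)$ is continuous, the definition of co-amenability supplies a $G$-fixed point in $\Prob(X)$, that is, a $G$-invariant probability measure on $X$. Applying Proposition~\ref{prop:elementary} once more yields elementarity of the $G$-action, completing the argument.

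The only non-formal point to verify is the joint continuity of the $G$-action on $\Prob(X)$, and this is where the hypothesis that $G\curvearrowright X$ be \emph{continuous} is used. Combined with the compactness of $X$, a standard tube-lemma argument shows that for each $f\in C(X)$ the orbit map $g\mapsto f\circ g$ is continuous from $G$ to $C(X)$ with the uniform norm, from which weak-* joint continuity of $G\times\Prob(X)\to\Prob(X)$ follows routinely. I do not expect any real obstacle beyond this sanity check: the proof reduces to a clean dictionary translation between the formulations \ref{pt:el:el} and \ref{pt:el:proba} of elementarity afforded by Proposition~\ref{prop:elementary}, bridged by co-amenability on the compact convex set $\Prob(X)$.
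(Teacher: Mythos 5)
Your argument is correct and coincides with the paper's own proof: both directions are handled by translating elementarity into the existence of an invariant probability measure via Proposition~\ref{prop:elementary} and then invoking co-amenability for the affine $G$-action on $\Prob(X)$. The continuity check you flag is exactly the point the hypothesis of a continuous action is there for, and the paper simply leaves it implicit.
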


\begin{proof}[Proof of the lemma]
We use Proposition~\ref{prop:elementary}. If $H$ acts elementarily, then there is an $H$-invariant probability measure on $X$. By co-amenability, there is also a $G$-invariant probability measure. Therefore, the $G$-action is elementary. The converse is immediate.
\end{proof}

\begin{proof}[Proof of Theorem~\ref{thm:coamen}]
Let $H_1,H_2\leq G$ be two commuting co-amenable subgroups. Any $G$-action on a dendrite gives rise to a $H_1\times H_2$-action. By Corollary~\ref{cor:product}, one of the factors acts elementarily. Now Lemma~\ref{coamen} shows that the $G$-action is elementary.
\end{proof}

\section{The fundamental bundle}
We now proceed to define the \emph{fundamental bundle} of a dendrite $X$, which will be a locally compact second countable space $\Bund(X)$ endowed with a topological quotient map $\Bund(X)\to X$. The points of $\Bund(X)$ are pairs $(x,c)$ with $x\in X$ and $c$ a component of $X\setminus\{x\}$; we dot not define $\Bund(X)$ when $X$ is reduced to a point.

We recall that all components of $X\setminus\{x\}$ are open in $X$; in particular there are countably many for each $x$. The fibre above $x$ will naturally identify with the discrete space $\pi_0(X\setminus\{x\})$.

\medskip

In order to topologize $\Bund(X)$, we map it injectively into the product space $X \times \Con(X)$, where $\Con(X)\se\Clo(X)$ denotes the hyperspace of closed connected subsets of $X$, i.e.\ of sub-dendrites, endowed with the Vietoris topology, which is metrisable and compact~\cite[3.7]{Illanes-Nadler}. The map is given by
$$(x, c) \longmapsto (x, \{x\} \cup c)$$
and thus the projection on the first coordinate is indeed a quotient map $\Bund(X)\to X$ for the corresponding topology. The only point left to justify is that $\Bund(X)$ is locally compact, or equivalently~\cite[I.20, I.66]{BourbakiTGI_alt} that its image can be written as the intersection of an open and a closed subset in $X \times \Con(X)$.

We claim that this image is closed in the set of pairs $(x,z)$ with $z$ not a singleton (which is an open set~\cite[1.15]{Illanes-Nadler}). Suppose indeed that a sequence $(x_n, \{x_n\} \cup c_n)$ as above converges to $(x,z)$. Then $x\in z$ holds; suppose for a contradiction that $z\setminus \{x\}$ is not a component of $X\setminus \{x\}$. It is, however, connected and non-empty by hypothesis; let thus $c$ be its component and $p\in c\setminus z$. Then $[p,x]$ meets $z$ at a point $q\neq x$. If $U$ is a small enough connected open neighbourhood of $q$, then $x_n$ is outside $\overline{U}$ and outside $[p,q]$ for $n$ large enough because $x_n\to x$. Therefore, $U$ and $p$ lie in the same component of $X\setminus \{x_n\}$. However, the definition of the Vietoris topology for $\{x_n\} \cup c_n\to z$ implies that for large enough $n$ each $\{x_n\} \cup c_n$, hence also each component $c_n$, avoids $p$ but meets the open set $U$ non-trivially (because $z$ does so). This is a contradiction.

\medskip

For later use, we also define the \emph{double} fundamental bundle $\Bund^2(X)$ to be the fibred product of two copies of $\Bund(X)$ over $X$. Thus $\Bund^2(X)$ is a topological bundle with discrete countable fibres over $X$ and its points are pairs of components corresponding to a same point.

\medskip

Since these constructions are natural, the group of homeomorphisms of $X$ acts by homeomorphisms on $\Bund(X)$ and on $\Bund^2(X)$; moreover, the maps to $X$ as well as the projections $\Bund^2(X)\to \Bund(X)$ are equivariant for these actions.

\section{Furstenberg maps}\label{sec:Furstenberg}
Let $G$ be a locally compact second countable group and $B$ a $G$-measure space, which means a standard measure space with a measurable $G$-action preserving the measure class. We shall be particularly interested in the case where $B$ is the Poisson--Furstenberg boundary of a (spread-out, generating) random walk on $G$. Slightly varying definitions~\cite{Burger-MonodGAFA,Bader-Furman_announce} have been introduced for a general $G$-measure space that satisfies the most desirable properties of this particular example. We choose the strongest definition~\cite{Bader-Furman_announce}:

\begin{defn}\label{defn:bd}
A \emph{strong $G$-boundary} is a $G$-measure space such that
\begin{itemize}
\item the action $G\action B$ is amenable in Zimmer's sense~\cite{Zimmer84},
\item the projections $B\times B\to B$ are isometrically ergodic~\cite{Bader-FurmanICM}.
\end{itemize}
\end{defn}

We refer to~\cite{Bader-Furman_announce}, to~\cite{Bader-FurmanICM} and to~\cite[\S4]{BDL16} for more details and for a proof that Poisson--Furstenberg boundaries are strong boundaries. We shall only need the fact that some strong boundary exists for each $G$.

Regarding the second condition in Definition~\ref{defn:bd}, we will use only the following two particular cases of it:

1)~\emph{Ergodicity with coefficients} of the diagonal $G$-action on $B\times B$, which means that any measurable equivariant map from $B\times B$ to any separable dual Banach $G$-module is essentially constant~\cite{Burger-MonodGAFA}. This implies in particular the usual ergodicity of the action of $G$ or of any lattice in $G$ on $B\times B$ and hence also on $B$.

2)~A lifting property: suppose that $Z$ and $X$ are standard Borel $G$-spaces with a Borel $G$-map $Z\to X$ with countable fibres. If there are measurable $G$-maps $f\colon B\times B\to Z$ and $B\to X$ such that the diagram
$$\xymatrix{
B\times B  \ar[d]\ar[rr]^f && Z \ar[d]\\
B  \ar[rr]\ar@{.>}[rru] && X
}$$
commutes a.e.\ for the first projection $B\times B\to B$, then there is a measurable $G$-map $B\to Z$ such that the diagram commutes a.e.; in other words, $f$ is essentially independent of the second variable. In the terminology of~\cite{Bader-FurmanICM} this application of the definition consists simply in viewing the fibres as separable metric spaces for the discrete metric, which constitutes indeed a Borel field of metric spaces.

\begin{thm}\label{bmap}
Let $G$ be a  locally compact second countable group with a continuous non-elementary action on a dendrite $X$. If $B$ is a strong $G$-boundary, then there exists a measurable $G$-map $B\to X$.

If moreover the $G$-action on $X$ is dendro-minimal, then the map $B\to X$ ranges in $\Ends(X)$ and is unique (up to null-sets).
\end{thm}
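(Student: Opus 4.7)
The proof divides naturally into an existence part—producing some equivariant measurable map $B \to X$ under non-elementarity alone—and the dendro-minimal refinement that pins the map down to $\Ends(X)$ and establishes uniqueness.

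For existence, my strategy is to combine the Zimmer amenability of $G \curvearrowright B$ (first clause of Definition~\ref{defn:bd}) with the fact that $X$ is a compact metrisable $G$-space to obtain an equivariant measurable $\beta\colon B \to \Prob(X)$. Composing with the canonical Borel map $\fhi$ of Proposition~\ref{prob} yields $\psi := \fhi \circ \beta \colon B \to \parts_{1,2}(X)$. The $G$-ergodicity of $B$—a consequence of the isometric ergodicity of the first projection $B \times B \to B$—forces $\psi$ to take values essentially either in the singleton stratum $\parts_1(X) = X$, in which case we are done, or in the 2-element stratum $\parts_2(X)$. In the latter case, $\psi$ cannot be essentially constant (its value would be a $G$-invariant pair, contradicting non-elementarity), so my plan is to descend along the canonical 2-to-1 cover $X^2 \setminus \Delta \to \parts_2(X)$. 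For generic $(b,b') \in B \times B$, the $\ZZ/2$-symmetry of $\psi(b)$ can be broken canonically using $\psi(b')$—for instance via the first-point retraction of $[\psi(b')]$ onto the arc $[\psi(b)]$, which singles out a preferred endpoint in the generic configuration—yielding an equivariant lift $B \times B \to X^2 \setminus \Delta$ of $\psi \circ \pi_1$. Since this cover has countable fibres, the lifting property (point~2 after Definition~\ref{defn:bd}) then produces the desired $B \to X$.

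For the dendro-minimal part, let $\phi \colon B \to X$ be the map just produced. To show that $\phi$ essentially ranges in $\Ends(X)$ I would exploit the fundamental bundle $\Bund(X) \to X$: for a.e.\ pair $(b,b')$ with $\phi(b) \neq \phi(b')$, the point $\phi(b')$ specifies a unique component of $X \setminus \{\phi(b)\}$, giving an equivariant measurable map $B \times B \to \Bund(X)$ over $\phi \circ \pi_1$. As $\Bund(X) \to X$ has countable fibres, the lifting property descends this to an equivariant $\hat\phi \colon B \to \Bund(X)$. If the set $\{b : \phi(b) \notin \Ends(X)\}$ had positive (hence, by ergodicity, full) measure, the associated closed components $\overline{c(b)}$ would form an equivariant measurable family of proper sub-dendrites of $X$; using the continuity of the hull map (Lemma~\ref{subcont}) together with Lemma~\ref{lem:dis}, an averaging or iteration procedure over this family should produce a proper $G$-invariant sub-dendrite, contradicting dendro-minimality.

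For uniqueness, if $\phi_1, \phi_2 \colon B \to \Ends(X)$ are two such maps then $G$-ergodicity renders the coincidence set $\{\phi_1 = \phi_2\}$ either null or co-null, and it suffices to rule out the former. In that case, $\phi_1 \neq \phi_2$ a.e., and the equivariant measurable family of arcs $b \mapsto [\phi_1(b), \phi_2(b)]$—whose extremities are end points by Lemma~\ref{lem:KM}—supplies extra $G$-invariant structure in $\Con(X)$ which, processed by the same bundle refinement as above, contradicts non-elementarity through Proposition~\ref{prop:elementary}. The main obstacle I foresee is the pair-to-point descent in the existence step, together with the analogous endpoint-valued refinement in the dendro-minimal case: in both one must verify that the canonical choice afforded by the companion boundary sample is defined on a conull set, which requires ruling out the non-generic configurations (the first-point retraction landing in the interior of $[\psi(b)]$, or $\phi(b)$ being a non-endpoint on a positive-measure set) by leveraging ergodicity against the non-elementarity and dendro-minimality hypotheses.
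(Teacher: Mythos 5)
Your overall architecture matches the paper's: Zimmer-amenability of the $G$-action on $B$ gives $B\to\Prob(X)$, Proposition~\ref{prob} and ergodicity reduce to a map into $X$ or into $\parts_2(X)$, and the fundamental bundle plus the lifting property drive both the $\Ends(X)$ refinement and uniqueness. The genuine gap is your treatment of the $\parts_2(X)$ case. You propose to break the $\ZZ/2$-symmetry of $\psi(b)$ via the first-point retraction of $[\psi(b')]$ onto $[\psi(b)]$, asserting that it ``singles out a preferred endpoint in the generic configuration''; but generically that retraction lands at an \emph{interior} point of the arc $[\psi(b)]$, and you explicitly defer ruling this out. The paper does not salvage this case --- it shows it cannot occur. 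First one needs the transversality statement (Lemma~\ref{lem:pairs}): $[\psi(b)]\cap[\psi(b')]=\varnothing$ for a.e.\ $(b,b')$. This is itself not automatic, since a conull subset of $B\times B$ need not contain a product of conull sets; the paper circumvents this with a Fubini argument combined with Lemma~\ref{Helly} and the four-set Lemma~\ref{lem:carre}, concluding that intersection a.e.\ would produce a $G$-invariant arc and contradict non-elementarity. You assume this disjointness implicitly (``generic $(b,b')$'') without proof. Granting it, one takes for each pair the unique $(x,c)\in\Bund(X)$ with $x\in[\psi(b)]$, $c\supseteq\psi(b')$ and $c\cap\psi(b)=\varnothing$ (your retraction point together with the component containing $\psi(b')$), applies the lifting property so that $(x,c)$ depends on $b$ only, and uses dendro-minimality to force $x\in\Ends(X)$; Lemma~\ref{lem:KM} then forces $x\in\psi(b)$, contradicting $c\cap\psi(b)=\varnothing$. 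So the $\parts_2$ case is vacuous, and this same non-existence is exactly what the paper invokes for uniqueness. Your phrase ``leveraging ergodicity against the non-elementarity and dendro-minimality hypotheses'' is precisely the missing content, not a routine verification.

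A smaller remark: your argument that $\phi$ ranges in $\Ends(X)$ is essentially the paper's, but no ``averaging or iteration procedure'' is needed. Once the lifting property shows that a.e.\ $\phi(b')$ lies in a single component $c(b)$ of $X\setminus\{\phi(b)\}$, the closed hull of the essential range of $\phi$ is a $G$-invariant sub-dendrite contained in $\overline{c(b)}$; dendro-minimality forces $\overline{c(b)}=X$, i.e.\ $\phi(b)$ has order one. You should also note that the existence statement reduces to the dendro-minimal case at the outset by passing to the minimal invariant sub-dendrite of Lemma~\ref{lem:min}, which is what licenses using dendro-minimality inside the existence argument.
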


\begin{rem}\label{rem:bmap}
In the non-dendro-minimal case, the above statement still provides a \emph{canonical} map, namely the unique map to the ends of the unique minimal sub-dendrite of Lemma~\ref{lem:min}.
\end{rem}

We start with a transversality lemma; below, $\parts_2(X)\se\Clo(X)$ denotes the space of sets of two points in $X$.

\begin{lem}\label{lem:pairs}
Under the initial assumptions of Theorem~\ref{bmap}, any measurable $G$-map $\fhi\colon B\to \parts_2(X)$ satisfies $[\fhi(b)]\cap[\fhi(b')]=\varnothing$ for almost all $(b,b')\in B^2$.
\end{lem}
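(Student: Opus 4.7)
The plan is to argue by contradiction: if the conclusion fails, I will manufacture a $G$-invariant probability measure on $X$, contradicting the non-elementarity of the action via Proposition~\ref{prop:elementary}. Set
\[
E \;=\; \bigl\{(b,b') \in B \times B : [\fhi(b)] \cap [\fhi(b')] \neq \varnothing \bigr\}.
\]
This is a Borel subset invariant under the diagonal $G$-action. Since $B$ is a strong $G$-boundary, ergodicity with coefficients implies in particular the usual ergodicity of $G \action B \times B$ (Section~\ref{sec:Furstenberg}, item~1), so $E$ is either null or conull; assume the latter.

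On $E$ I define a Borel $G$-equivariant map $\pi \colon B \times B \to \parts_{1,2}(X)$ by letting $\pi(b,b')$ be the set of extremities of the sub-dendrite $[\fhi(b)] \cap [\fhi(b')]$. Characterisation~\ref{pt:intersect} of dendrites guarantees that this intersection is connected, and since it is contained in the arc $[\fhi(b)]$ it is itself an arc or a single point. Concretely, writing $\fhi(b') = \{p',q'\}$,
\[
\pi(b,b') \;=\; \bigl\{\, r_{[\fhi(b)]}(p'),\; r_{[\fhi(b)]}(q') \,\bigr\},
\]
where $r_Y$ denotes the first-point retraction onto the sub-dendrite $Y$ (cf.\ 10.24--10.25 in~\cite{Nadler}). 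Measurability of $\pi$ reduces, via Lemma~\ref{subcont} and the Borel dependence of $r_Y(x)$ on $(x,Y) \in X \times \Con(X)$, to routine verification; equivariance is automatic from the naturality of the construction, and symmetry in $(b,b')$ follows from the corresponding identity for endpoints of an intersection arc.

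Now compose $\pi$ with the canonical weak-$*$ continuous injection
\[
\parts_{1,2}(X) \hookrightarrow \Prob(X) \subseteq C(X)^{*}, \qquad \{x\} \mapsto \delta_x,\quad \{x,y\} \mapsto \tfrac{1}{2}(\delta_x + \delta_y).
\]
Since $C(X)^{*}$ is a separable dual Banach $G$-module, ergodicity with coefficients of $G \action B \times B$ forces $\pi$ to be essentially constant. The constant value is thus a $G$-invariant probability measure on $X$ supported on at most two points, and Proposition~\ref{prop:elementary}\ref{pt:el:proba}$\Rightarrow$\ref{pt:el:el} yields the desired contradiction with non-elementarity.

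The substance of the argument is the construction of $\pi$; the rest is a standard ``boundary map yields an invariant pair'' package. The main step requiring care is verifying that the endpoint-of-intersection map is well defined and Borel on $E$. Here one should not try to deduce this from continuity of intersection in the Vietoris topology (which fails), but rather from the explicit projection formula above, or equivalently from the Borel regularity of the median map $X^3 \to X$ intrinsic to a dendrite.
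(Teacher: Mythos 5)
Your proof is correct, but it takes a genuinely different route from the paper's. The paper also argues by contradiction and uses plain double ergodicity to get that $[\fhi(b)]\cap[\fhi(b')]\neq\varnothing$ almost everywhere, but then it stays entirely combinatorial: a Fubini argument combined with the four-set acyclicity Lemma~\ref{lem:carre} upgrades the a.e.\ statement on $B\times B$ to ``non-empty intersection for \emph{all} pairs from a single co-null set $A$'' (this step is needed precisely because a co-null subset of $B\times B$ need not contain a product of co-null sets), after which Helly's theorem (Lemma~\ref{Helly}) produces a non-empty $\Lambda$-invariant arc or point for a countable dense subgroup $\Lambda$, hence a $G$-invariant one by continuity. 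You instead package the contradiction functional-analytically: the equivariant Borel map $(b,b')\mapsto$ endpoints of $[\fhi(b)]\cap[\fhi(b')]$, pushed into $\Prob(X)\subseteq C(X)^*$, is essentially constant by ergodicity with coefficients on $B\times B$ (note $C(X)^*$ is indeed a legitimate coefficient module since $X$ is compact metrisable and the action continuous), and the constant is an invariant measure contradicting Proposition~\ref{prop:elementary}. What each approach buys: yours invokes the stronger ``with coefficients'' form of double ergodicity but sidesteps both the Fubini/product-of-co-null-sets manoeuvre and the countable-dense-subgroup detour; the paper's needs only ordinary ergodicity of $B\times B$ plus elementary dendrite combinatorics. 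Your caveat about not deducing measurability from Vietoris continuity of intersections, and fixing it via the first-point retraction formula (equivalently the median map), is well taken and is the right way to make the map Borel.
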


\begin{proof}[Proof of the lemma]
Otherwise, by double ergodicity, $[\fhi(b)]\cap[\fhi(b')]$ is non-empty for almost all $(b, b')$ in $B\times B$. In general, a co-null set in $B\times B$ need not contain any product of co-null sets in $B$. However, in our situation, precisely this does happen: we claim that there is a co-null set $A\se B$ such that $[\fhi(b)]\cap[\fhi(b')]$ is non-empty for all $b, b'\in A$.

This claim will contradict the non-elementarity. Indeed, we can assume that $A$ is invariant under a countable dense subgroup $\Lambda$ of $G$. Then the collection $\{[\fhi(b)]\}_{b\in A}$ has non-empty intersection by Lemma~\ref{Helly} and is $\Lambda$-invariant (we can assume that $\fhi$ is strictly $G$-equivariant by~\cite[B.5]{Zimmer84}). This intersection is an arc or a point and is preserved by $G$ by continuity of the action, showing that the $G$-action is elementary.

We now prove the claim. By Fubini, there is a co-null set $B_0\se B$ and for each $b\in B_0$ a co-null set $B_b\se B$ such that $[\fhi(b)]\cap[\fhi(b')]$ is non-empty for all $b'\in B_b$. If $A=B_0$ satisfies the claim, we are done. Otherwise, we can fix $b,c\in B_0$ with $[\fhi(b)]\cap[\fhi(c )] = \varnothing$ and define $A=B_b\cap B_c$. We need to show that for all $b', c'\in B_b\cap B_c$ the intersection $[\fhi(b')]\cap[\fhi(c')]$ is non-empty. This follows by applying Lemma~\ref{lem:carre} to the arcs $[\fhi(b)]$, $[\fhi(b')]$, $[\fhi(c)]$, $[\fhi(c')]$.
\end{proof}

\begin{proof}[Proof of Theorem~\ref{bmap}] 
By Lemma~\ref{lem:min}, we can assume that the action is dendro-minimal. By amenability, there is a measurable $G$-map $B\to\Prob(X)$. We compose it with the map obtained in Proposition~\ref{prob} so that by ergodicity we have either a map $\fhi\colon B\to \parts_{2}(X)$ or a map $\fhi\colon B\to X$.

We claim that in the second case $\fhi$ ranges in $\Ends(X)$. By ergodicity and since $G$ does not fix a point in $X$, the points $\fhi(b)$ and $\fhi(b')$ are distinct for almost every pair $(b,b')$. Therefore we can define a $G$-map $f\colon B\times B\to \Bund(X)$ by $f(b,b')=(\fhi(b), c)$ where $c$ is the component of $\fhi(b')$ in $X\setminus \{\fhi(b)\}$. The lifting property for the bundle $\Bund(X)\to X$ implies that there is a measurable $G$-map $b\mapsto c_b$ such that for almost all $(b, b')$ the point $\fhi(b')$ lies in the component $c_b$ independently of $b'$. In other words, $\fhi$ ranges essentially in $c_b$; dendro-minimality implies readily that $b$ is an end.

We now proceed to rule out the first case. By Lemma~\ref{lem:pairs}, we have $[\fhi(b)]\cap[\fhi(b')]=\varnothing$ almost surely. For such pairs $(b, b')$ there is a unique $(x,c)$ in $\Bund(X)$ such that $x\in [\fhi(b)]$, $c\supseteq \fhi(b')$ and $c\cap \fhi(b)=\varnothing$. We define $f(b,b')=(x,c)$. Applying again the lifting property, we conclude that $(x,c)$ depends on $b$ only. This implies as above that $x$ is almost surely an end; this, however, is impossible since $c\cap \fhi(b)=\varnothing$.

It remains to show the uniqueness of the map $B\to \Ends(X)$. This follows, using ergodicity, from the fact that there is no $G$-map to $\parts_{2}(X)$.
\end{proof}

\begin{cor}\label{cor:ramen}
Let $G$ be a locally compact second countable group with a continuous non-elementary action on a dendrite $X$.

Then the amenable radical $\Ramen(G)$ acts trivially on the unique minimal $G$-invariant set $M$ of Lemma~\ref{lem:min}.
\end{cor}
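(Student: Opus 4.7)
Setting $N := \Ramen(G)$, a closed normal amenable subgroup of $G$, my plan is to produce from a strong $G$-boundary a measurable $G$-equivariant map into the $N$-fixed points of $X$, and then use the fact that the support of its push-forward must contain the canonical minimal set $M$. Passing to the unique minimal sub-dendrite $Y := [M]$ of Lemma~\ref{lem:min}, I may assume the $G$-action on $Y$ is dendro-minimal and non-elementary, while $M$ remains the unique minimal $G$-invariant closed subset.

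To construct such a map, I consider the set $\mathcal{M}_N := \Prob(Y)^N$ of $N$-invariant Borel probability measures on $Y$: non-empty by amenability of $N$, convex and weak-* compact as a closed subset of $\Prob(Y)$, and $G$-invariant by normality of $N$. Let $B$ be a strong $G$-boundary; Zimmer amenability of the $G$-action on $B$ supplies a measurable $G$-equivariant map $\hat\nu \colon B \to \mathcal{M}_N$. Composing with the canonical $\Homeo(Y)$-equivariant map $\fhi_{\Prob}\colon \Prob(Y) \to \parts_{1,2}(Y)$ of Proposition~\ref{prob}, I obtain a $G$-equivariant measurable map $\Phi \colon B \to \parts_{1,2}(Y)$, and by the $\Homeo(Y)$-equivariance of $\fhi_{\Prob}$ each value $\Phi(b)$ is an $N$-invariant finite subset of $Y$.

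Next I apply the dichotomy of Theorem~\ref{bmap}: by ergodicity of the $G$-action on $B$, the cardinality $|\Phi(b)|$ is essentially constant. The case $|\Phi(b)| \equiv 2$ would give a $G$-equivariant measurable map $B \to \parts_2(Y)$ in a non-elementary dendro-minimal setting, which is precluded by exactly the argument already carried out in the proof of Theorem~\ref{bmap}: Lemma~\ref{lem:pairs} forces $[\Phi(b)] \cap [\Phi(b')] = \varnothing$ almost surely, then the construction $(b,b') \mapsto (x,c) \in \Bund(Y)$ with $x \in [\Phi(b)]$, $c \supseteq \Phi(b')$, $c \cap \Phi(b) = \varnothing$, together with the lifting property of $B$ and dendro-minimality, forces $x$ into $\Ends(Y)$, which collides with $c \cap \Phi(b) = \varnothing$. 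Therefore $\Phi$ essentially takes singleton values, producing a $G$-equivariant measurable map $\Phi \colon B \to Y$ with $\Phi(b) \in \Fix(N)$ for almost every $b \in B$.

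Finally, the push-forward $\Phi_*\mu_B$ (where $\mu_B$ represents the quasi-invariant measure class on $B$) has closed, non-empty, $G$-invariant support $S \subseteq Y$. On one hand $S \subseteq \Fix(N)$ since $\Fix(N)$ is closed and contains the essential image of $\Phi$; on the other hand $S \supseteq M$ by the minimality statement of Lemma~\ref{lem:min}. Hence $M \subseteq \Fix(N)$, which is the claim. The main obstacle is the exclusion of the pair-valued case for $\Phi$, but this introduces no essentially new difficulty since it is a direct replay of the machinery already assembled for Theorem~\ref{bmap}.
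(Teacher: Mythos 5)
Your argument is correct, but it reaches the $\Ramen(G)$-fixed points by a genuinely different mechanism than the paper. The paper's proof is three lines: it takes a strong boundary $B$ for the quotient $G/\Ramen(G)$, invokes the fact that such a $B$ is also a strong boundary for $G$ itself, and applies Theorem~\ref{bmap} directly; since $\Ramen(G)$ acts trivially on that $B$, equivariance alone forces the essential values of the map $B\to X$ to lie in $\Fix(\Ramen(G))$. You instead keep a strong boundary for $G$ and build the Furstenberg map \emph{relative to} $N=\Ramen(G)$: amenability of $N$ makes $\Prob(Y)^N$ a non-empty compact convex set, normality makes it $G$-invariant, Zimmer amenability of $G\action B$ gives an equivariant map into it, and Proposition~\ref{prob} together with the already-established non-existence of equivariant maps $B\to\parts_2(Y)$ in the dendro-minimal non-elementary case (this is exactly what the last sentence of the proof of Theorem~\ref{bmap} records, so you may simply cite it rather than replay it) forces singleton, $N$-fixed values. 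Both proofs conclude identically: $\Fix(N)$ is closed, non-empty and $G$-invariant, hence contains $M$; your detour through the support of the push-forward measure is harmless but not needed for this last step. What your route buys is independence from the assertion that a strong boundary for $G/\Ramen(G)$ remains a strong boundary for $G$ --- a true but not entirely trivial statement (amenability of actions is stable under extension by the amenable $N$, and one must also check the relative isometric ergodicity); the cost is re-running part of the Theorem~\ref{bmap} machinery, which the paper's choice of boundary avoids entirely.
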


In general, the conclusion does not hold for all of $X$ instead of $M$. This is illustrated by the following example which is non-elementary, dendro-minimal and even \emph{topologically transitive}.

\begin{example}
Let $Q$ be a group acting on a locally finite simplicial tree $T$, transitively on the set $E$ of unoriented edges; for instance, $\SL_2(\ZZ)$ or $\SL_2(\QQ_p)$. Choose a discrete abelian group $A$ acting minimally on $\RR$, for instance $\ZZ^2$ viewed as $\ZZ[\sqrt 2]$. Let $X$ be the dendrite obtained as end-compactification of the geometric realization of $T$ (thus it is homeomorphic to Gehman's dendrite~\cite{Gehman} in both above cases, if $p=2$, although the trees are different). Then the permutational wreath product
$$G = \Big(\bigoplus_{E} A\Big) \rtimes Q$$
has an action on $X$ obtained by identifying the interior of each edge of $T$ with $\RR$. The action has the claimed properties, but the normal amenable subgroup $\bigoplus_{E} A$ acts faithfully.
\end{example}

The above example is a special case of a situation discussed again after Proposition~\ref{prop:vegetale} below.

\begin{proof}[Proof of Corollary~\ref{cor:ramen}]
Since the action is non-elementary, we can assume that it is dendro-minimal upon replacing $X$ by its minimal $G$-invariant dendrite. Any strong boundary $B$ for $G/\Ramen(G)$ is also a strong boundary for $G$. The existence of a map as in Theorem~\ref{bmap} implies that some points of $X$ are fixed by $\Ramen(G)$. However, the set of $\Ramen(G)$-fixed points is closed and $G$-invariant. Therefore, this set contains $M$.
\end{proof}

\section{An invariant in bounded cohomology}
The first goal of this section is to define a canonical $2$-cocycle $\omega$ for any dendrite $X$ not reduced to a point. This cocycle will be a topological generalisation of the cocycle introduced for trees in~\cite{Monod-ShalomCRAS}; see however Remark~\ref{rem:quasi} below for an important difference.

Given $p,q\in X$ we define a Borel function $\alpha(p,q)$ on the double bundle $\Bund^2(X)$ by
\begin{equation*}
\alpha(p,q)(x,c,c') =
\begin{cases}
1 &\mbox{if } p\in c, q\in c' \mbox{ and } c\neq c'\\
-1 & \mbox{if } p\in c', q\in c \mbox{ and } c\neq c'\\
0 &\mbox{otherwise}
\end{cases}
\end{equation*}
recalling that $c$ and $c'$ are components of $X\setminus \{x\}$ (in particular the cases above are indeed mutually exclusive). Observe that the above expression is alternating in $(p,q)$ and in $(c,c')$, is invariant under the homeomorphisms of $X$ and is non-zero in $x$ if and only if $x\in [p,q]$ with $x\neq p,q$.

Now the cocycle $\omega$ is defined as the homogeneous coboundary of $\alpha$, which in view of the alternation can be written as
\begin{equation*}
\omega(p,q,r) = \alpha(p,q) + \alpha (q,r) + \alpha (r,p).
\end{equation*}
It follows that $\omega$ is a canonical alternating $2$-cocycle with values in alternating Borel functions on $\Bund^2(X)$. By construction, $\omega(p,q,r)$ takes only the values $\pm 1,0$ and indeed vanishes at all points $(x,c,c')$ unless $x$ is the unique point in the intersection of the three arcs $[p,q]$, $[q,r]$ and $[r,p]$. When $x$ is this point, a direct inspection shows that there are at most six pairs $(c,c')$ such that $\omega(p,q,r)(x,c,c')$ is non-zero. More precisely, there are exactly six such pairs when $p,q,r$ span a tripod with center $x$, two pairs when they span an arc but are pairwise disjoint, and none otherwise.

\medskip

It will be convenient to restrict $\omega$ to range in a smaller space. To this end, we denote by $\Lambda(X)$ the sub-bundle of $\Bund^2(X)$ above the branch points of $X$ and consider $\Lambda(X)$ as a set without topology but with its natural action by $\Homeo(X)$. In particular, $\Lambda(X)$ is countable and $\ell^p(\Lambda(X))$ is a separable isometric dual Banach $\Homeo(X)$-module for all $1\leq p < \infty$. We can summarize some of the above discussion as follows.

\begin{prop}\label{prop:omega}
There is a canonical $\Homeo(X)$-equivariant alternating bounded (norm-)Borel cocycle
$$\omega\colon X^3 \longrightarrow \ell^p(\Lambda(X))$$
which is non-zero on all triples in  $X^3$ that are not contained in a common arc.\qed
\end{prop}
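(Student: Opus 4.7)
The plan is to verify, in turn, each property of $\omega$ collected in the proposition; most are immediate from the construction and the support analysis carried out just above.

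The cocycle identity for $\omega$ is automatic since $\omega = d\alpha$ is by construction a homogeneous coboundary in the $\Homeo(X)$-invariant space of functions on $\Bund^2(X)$, and restriction to the invariant subset $\Lambda(X)$ preserves this identity. Alternation of $\omega$ under permutations of $(p,q,r)$ follows from the antisymmetry $\alpha(q,p)=-\alpha(p,q)$, immediate from the definition of $\alpha$. Finally, both $\alpha$ and $\Lambda(X)$ are defined in purely topological terms (points of $X$, components of $X\setminus\{x\}$, branch points), so they commute with the $\Homeo(X)$-action and equivariance of $\omega$ follows.

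The remaining analytic claims all rest on the support count already discussed: for any $(p,q,r)\in X^3$, the function $\omega(p,q,r)$ on $\Bund^2(X)$ is non-zero at at most six points $(x,c,c')$, all lying over the unique common point $x$ of the three arcs $[p,q]$, $[q,r]$, $[r,p]$, with every non-zero value in $\{-1,+1\}$ (because at a fixed $(x,c,c')$ at most one of the three summands in $\omega$ can be non-zero). When $p,q,r$ are not collinear they span a tripod and lie in three pairwise distinct components of $X\setminus\{x_0\}$, so the center $x_0$ has Menger--Urysohn order at least three, i.e.\ $x_0\in\Br(X)$; all six non-zero values then fall in $\Lambda(X)$, producing a non-zero element of $\ell^p(\Lambda(X))$ of norm $6^{1/p}$. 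When $p,q,r$ are collinear, the support on $\Bund^2(X)$ consists of at most two values at the middle point, possibly discarded by the restriction to $\Lambda(X)$. Either way $\|\omega(p,q,r)\|_{\ell^p(\Lambda(X))}\le 6^{1/p}$, which establishes boundedness and the non-vanishing claim on non-collinear triples.

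The one real item left is norm-Borel measurability, and this is the main technical point. Since $\Lambda(X)$ is countable, $\ell^p(\Lambda(X))$ is separable with a countable base of open balls, so it suffices to show that $(p,q,r)\mapsto \|\omega(p,q,r)-g\|_{\ell^p}^p$ is Borel for every $g\in\ell^p(\Lambda(X))$; being a countable sum of non-negative terms, this reduces to Borel measurability of each scalar coordinate $(p,q,r)\mapsto \omega(p,q,r)(x,c,c')$. Each coordinate is integer-valued in $\{-1,0,+1\}$, and its level sets are finite Boolean combinations of conditions of the form ``$p\in c$'' (Borel because $c\subseteq X$ is open) and ``$x\in[p,q]$'' (Borel in $(p,q)$ because $(p,q)\mapsto[p,q]$ is continuous into $\Clo(X)$ by Lemma~\ref{subcont}, and $A\mapsto\mathbf{1}_{x\in A}$ is Borel on $\Clo(X)$). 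Combining these observations completes the verification.
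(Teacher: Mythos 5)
Your proposal is correct and follows essentially the same route as the paper: the proposition is stated there with a \qed precisely because it summarizes the preceding discussion (the coboundary construction of $\omega$, the count of at most six non-zero values $\pm 1$ located over the unique common point of the three arcs, and the observation that a tripod center is a branch point, hence lies in $\Lambda(X)$). Your additional details on the $\ell^p$-norm bound and on norm-Borel measurability via separability and coordinate-wise Borelness are exactly the ``straightforward verification'' the paper leaves implicit.
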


One aspect that will be important later on is the following. Although $\omega$ is the coboundary of $\alpha$, which can be restricted to range in  $\ell^\infty(\Lambda(X))$, there is in general no equivariant bounded map ranging in $\ell^p(\Lambda(X))$ with $p<\infty$ of which $\omega$ is the coboundary. This fact will be a by-product of boundary theory.

\begin{rem}\label{rem:quasi}
In the geometric setting of trees~\cite{Monod-ShalomCRAS}, the cocycle could be considered as a \emph{quasification} of the well-known Haagerup $1$-cocycle which underlies the connection between actions on trees and on Hilbert spaces. For dendrites, in contrast, there is no obvious connection to actions on Hilbert spaces; compare Problem~\ref{prob:T}. We can still define a related $1$-cocycle ranging in the space of functions of the (non-double) bundle $\Bund(X)$, but it is unclear how to make any use of it since the space of points (or of branch points) of a given arc admits no invariant measure or mean unless we are in a discrete or non-nesting case --- which would precisely be accessible to tree or $\RR$-tree methods.
\end{rem}

We now obtain a cohomological obstruction to non-elementary group actions on dendrites:

\begin{thm}\label{thm:coho}
Let $G$ be a locally compact second countable group with a non-elementary continuous action on a dendrite $X$. Then $\hbc^2(G, \ell^p(\Lambda(X)))$ contains a canonical non-trivial element for all $1\leq p < \infty$.
\end{thm}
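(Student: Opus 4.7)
The plan is to realize the class by pulling back the canonical cocycle $\omega$ of Proposition~\ref{prop:omega} along a Furstenberg boundary map, and then to read off the class using the Bader--Furman--Burger--Monod resolution of bounded cohomology. First I would reduce to the dendro-minimal case. By Lemma~\ref{lem:min} the action admits a unique minimal invariant sub-dendrite $X_0=[M]\subseteq X$, and the inclusion $\Lambda(X_0)\subseteq\Lambda(X)$ realises $\ell^p(\Lambda(X_0))$ as a $G$-equivariant isometric direct summand of $\ell^p(\Lambda(X))$; producing a canonical non-trivial class in $\hbc^2(G,\ell^p(\Lambda(X_0)))$ therefore suffices, and we may assume that $X=X_0$ and the action is dendro-minimal.

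Fix a strong $G$-boundary $B$ and let $\phi\colon B\to\Ends(X)$ be the essentially unique measurable $G$-equivariant map provided by Theorem~\ref{bmap}. The pull-back
\[
\Phi(b_1,b_2,b_3):=\omega(\phi(b_1),\phi(b_2),\phi(b_3))
\]
is, by Proposition~\ref{prop:omega}, a bounded, measurable, $G$-equivariant, alternating $\ell^p(\Lambda(X))$-valued cocycle on $B^3$. Since $B$ is Zimmer-amenable and $\ell^p(\Lambda(X))$ is a separable dual Banach $G$-module (with predual $\ell^{p'}(\Lambda(X))$ for $1<p<\infty$ and $c_0(\Lambda(X))$ for $p=1$), the Burger--Monod resolution identifies $\hbc^\bullet(G,\ell^p(\Lambda(X)))$ with the cohomology of the complex of $G$-equivariant alternating $L^\infty_{w^*}$-cochains on $B^{\bullet+1}$. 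Consequently $\Phi$ represents a class
\[
[\Phi]\in\hbc^2(G,\ell^p(\Lambda(X))),
\]
canonical in view of the canonicity of $\omega$ and the essential uniqueness of $\phi$.

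The main obstacle is non-triviality; here one also establishes the by-product announced in Remark~\ref{rem:quasi}. Assume for contradiction that $[\Phi]=0$, so that there exists a bounded measurable $G$-equivariant $\beta\colon B^2\to\ell^p(\Lambda(X))$ with $d\beta=\Phi$. The cochain $\tilde\alpha(b_1,b_2):=\alpha(\phi(b_1),\phi(b_2))$ also trivialises $\Phi$, but a priori only in $\ell^\infty(\Lambda(X))\supset\ell^p(\Lambda(X))$; the difference $\gamma:=\beta-\tilde\alpha$ is a $G$-equivariant measurable $1$-cocycle on $B^2$ with values in $\ell^\infty(\Lambda(X))$, hence of the form $\gamma(b_1,b_2)=h(b_2)-h(b_1)$ for some measurable $h\colon B\to\ell^\infty(\Lambda(X))$. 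Applying the lifting property of strong boundaries (Section~\ref{sec:Furstenberg}) coordinatewise along the countable $G$-set $\Lambda(X)$, together with the isometric ergodicity of $B\times B\to B$ and the uniqueness clause of Theorem~\ref{bmap}, one shows that $h$ may be chosen $G$-equivariant and factored through $\phi$. This would yield a $G$-equivariant measurable primitive of $\omega$ at the level of $X^2$ taking values in $\ell^p(\Lambda(X))$. But this is impossible: by Lemma~\ref{lem:dense} and dendro-minimality, for almost every pair of distinct ends $\xi_1,\xi_2$ the arc $[\xi_1,\xi_2]$ contains infinitely many branch points, and $\alpha(\xi_1,\xi_2)$ assigns to each of them a value of modulus~$1$, while any modification by an equivariant coboundary $h(\xi_2)-h(\xi_1)$ differs from $\alpha$ only by a $1$-cocycle whose value on an arc telescopes to $h(\xi_2)-h(\xi_1)$ and cannot cancel the $\pm 1$ contributions along the arc. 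This contradicts the existence of $\beta$ and proves that $[\Phi]\neq 0$.
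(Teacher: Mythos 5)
Your construction of the class coincides with the paper's: pull back $\omega$ along the boundary map $\phi$ of Theorem~\ref{bmap} and use amenability of $B$ to realise $\hbc^2$ on $L^\infty_{w^*}$-cochains over $B^{\bullet+1}$. The gap is in the non-triviality step, where you abandon the one fact that makes the argument work. Since $\fhi^*\omega$ is \emph{alternating} and $B\times B\to B$ is ergodic with coefficients, every $G$-equivariant alternating essentially bounded map $B^2\to\ell^p(\Lambda(X))$ is essentially constant and hence zero; so in the alternating subcomplex there are no nonzero $1$-cochains at all, and $[\fhi^*\omega]=0$ forces $\fhi^*\omega=0$ almost everywhere. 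You never invoke this. Instead you posit a primitive $\beta$ on $B^2$, compare it with $\tilde\alpha$ inside $\ell^\infty(\Lambda(X))$, and reduce to showing that no equivariant coboundary can ``cancel the $\pm1$ contributions along the arc.'' That last assertion is precisely the delicate point and you do not prove it: a priori one can try $h(b)(x,c,c')=\tfrac12\bigl(\mathbf{1}[\phi(b)\in c]-\mathbf{1}[\phi(b)\in c']\bigr)$, which does kill the $\pm1$ entries of $\tilde\alpha$ on the arc, and ruling out all such corrections is nontrivial. The paper explicitly records (after Proposition~\ref{prop:omega} and in Remark~\ref{rem:quasi}) that the non-existence of a bounded equivariant $\ell^p$-valued primitive of $\omega$ is a \emph{by-product} of the boundary-theoretic non-vanishing, not an input to it; your argument has the logical direction reversed. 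Your route also needs the unproved claim that almost every arc $[\phi(b_1),\phi(b_2)]$ contains infinitely many branch points (Lemma~\ref{lem:dense} does not give this), and the auxiliary steps (that $\gamma$ is a coboundary of some $h\colon B\to\ell^\infty(\Lambda(X))$, that $h$ can be made equivariant and factored through $\phi$) are stated without justification for the non-separable module $\ell^\infty(\Lambda(X))$.

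The missing endgame is also where non-elementarity must be used. Once one knows $\fhi^*\omega=0$ a.e., Proposition~\ref{prop:omega} says that almost every triple $\phi(b_1),\phi(b_2),\phi(b_3)$ lies in a common arc; since $\phi$ ranges in end points of the minimal sub-dendrite and an arc contains at most two end points, almost every triple has at most two distinct values, and Fubini then shows $\phi$ essentially ranges in a set of at most two points, which $G$ must preserve --- contradicting non-elementarity. As a minor further point, your preliminary reduction to $X_0=[M]$ is unnecessary (the paper simply uses the canonical map of Remark~\ref{rem:bmap}) and the claimed inclusion $\Lambda(X_0)\subseteq\Lambda(X)$ is not literally one: components of $X_0\setminus\{x\}$ are not components of $X\setminus\{x\}$, though there is a natural equivariant injection that could repair this.
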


Theorem~\ref{thm:coho-intro} from the introduction follows by choosing $p=2$; we only need to justify that $\ell^2(\Lambda(X))$ contains no (non-zero) $G$-invariant vector. Any non-trivial level-set of such a vector would be a non-empty finite $G$-invariant subset of $\Lambda(X)$ and its projection to $X$ would witness the elementarity of $G$ according to Proposition~\ref{prop:elementary}\ref{pt:el:fin}.

\begin{proof}[Proof of Theorem~\ref{thm:coho}]
Let $B$ be a strong boundary for $G$ as in Section~\ref{sec:Furstenberg}. By Theorem~\ref{bmap}, there exists a measurable $G$-map $\fhi\colon B\to X$; we choose the canonical map of Remark~\ref{rem:bmap}. Combining $\fhi$ with the cocycle $\omega$ of Proposition~\ref{prop:omega}, we obtain a bounded measurable $G$-equivariant cocycle
$$\fhi^* \omega\colon B^3\longrightarrow  \ell^p(\Lambda(X))$$
which is moreover alternating. By Theorem~2 in~\cite{Burger-MonodGAFA}, the fact that $B$ is an amenable $G$-measure space implies that $\fhi^* \omega$ represents a continuous bounded cohomology class $[\fhi^* \omega]$ in the space $\hbc^2(G, \ell^p(\Lambda(X)))$. The fact that $B$ is doubly ergodic with coefficients together with the alternating property of $\fhi^* \omega$ shows that this class $[\fhi^* \omega]$ vanishes only if the map $\fhi^* \omega$ vanishes almost everywhere, see~\cite{Burger-MonodGAFA} and~\cite[\S11]{Monod}.

Suppose thus for a contradiction that $\fhi^* \omega$ is a.e.\ zero. By the above description of $\omega$, this implies that $\fhi$ sends almost every triple of points in $B$ to a triple contained in some arc in $X$. We claim that this contradicts the non-elementarity, thus finishing the proof of Theorem~\ref{thm:coho}.

One way to prove this claim is as follows. Since $\fhi$ ranges in the ends of some sub-dendrite (Remark~\ref{rem:bmap}), it sends in fact almost every triple of points in $B$ to a set of at most two points. An application of Fubini's theorem now shows that $\fhi$ essentially ranges in a set of at most two points, which implies that $G$ preserves this set and therefore acts elementarily.
\end{proof}

\begin{proof}[Proof of Theorem~\ref{thm:irred}]
Suppose that an irreducible lattice $\Gamma$ in a product $G_1\times \cdots \times G_n$ of~$n\geq 2$ locally compact $\sigma$-compact groups has a non-elementary action on a dendrite $X$. Choose $1\leq p < \infty$; then $\hb^2(\Gamma, \ell^p(\Lambda(X)))$ is non-trivial by Theorem~\ref{thm:coho}. We now apply Theorem~16 from~\cite{Burger-MonodGAFA}, namely: there is an index $1\leq i \leq n$ and a non-zero closed $\Gamma$-invariant subspace $V\se \ell^p(\Lambda(X))$ such that the representation of $\Gamma$ on $V$ extends to a continuous $G$-representation which factors through the projection $G\to G_i$. (See also \S5.1 in~\cite{Burger-MonodGAFA} and Lemma~4.4 in~\cite{MonodVT}.)

Choose a non-zero vector $v\in V$. Since the non-zero level-sets of $v$ are finite and since $\Gamma$ acts non-elementarily, $\Gamma$ does not fix $v$ (we recall here that there is an equivariant map $\Lambda(X)\to X$). Thus there is a non-zero value $\lambda\neq 0$ of $v$, an element $z\in \Lambda(X)$ and $\gamma\in \Gamma$ with $v(z) = \lambda$ and $\gamma v(z) \neq \lambda$. By $p$-summability, there is $\epsilon>0$ such that no other value $\lambda'\neq\lambda$ of $v$ satisfies $|\lambda-\lambda'|<\epsilon$. Consider the disjoint subsets
$$A = \{v' :  v'(z) = \lambda \}, \kern3mm B=\{v' : |v'(z) - \lambda| \geq \epsilon\}$$
of $\ell^p(\Lambda(X))$. Both are closed, and their union contains the orbit $\Gamma v$. Therefore this union contains the extended orbit $Gv$ since $\Gamma$ projects densely to $G_i$. Since both $A$ and $B$ meet $\Gamma v$ non-trivially, this shows that $G_i$ cannot be connected.
\end{proof}

\section{Dynamics on dendrites}\label{sec:dynamics}
We start with the global picture. Let $G$ be a group acting by homeomorphisms on a compact space $X$. As in Section~\ref{sec:elem}, the space $\Prob(X)$ of Borel probability measures is compact for the weak-* topology; we observe that for any closed subspace $Y\se X$, the canonical map $\Prob(Y)\to \Prob(X)$ is an embedding.

\smallskip
Recall that the $G$-action on $X$ is called \emph{strongly proximal} if the closure of any $G$-orbit in $\Prob(X)$ contains a Dirac mass. This condition was introduced by Furstenberg~\cite{Furstenberg73} and nowadays a $G$-space that is both minimal and strongly proximal is called a (topological) \emph{$G$-boundary in the sense of Furstenberg}, not to be confused with the (measurable) Poisson--Furstenberg boundaries mentioned in Section~\ref{sec:Furstenberg}. See e.g.~\cite{GlasnerLNM} for an introduction.

\begin{thm}\label{thm:prox}
Let $G$ be a group acting on a dendrite $X$.

If the action is non-elementary and dendro-minimal, then it is strongly proximal.
\end{thm}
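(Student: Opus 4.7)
The plan is to show that for every $\mu \in \Prob(X)$, the orbit closure $\overline{G\mu}$ in $\Prob(X)$ contains a Dirac mass, by contracting $\mu$ towards any chosen end of $X$. The geometric engine will be Lemma~\ref{lem:dis}, applied iteratively.

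My first target will be the following contraction estimate: for any proper sub-dendrite $Y \subsetneq X$ and any $\mu \in \Prob(X)$,
$$\inf_{g \in G} \mu(gY) = 0.$$
I would prove this by producing infinitely many pairwise disjoint $G$-translates $\{g_n Y\}_{n \geq 1}$; summability $\sum_n \mu(g_n Y) \leq 1$ then forces $\mu(g_n Y) \to 0$. The construction is inductive: given $k$ pairwise disjoint translates $Y_1,\ldots,Y_k$, form the spanned sub-dendrite $Z_k := [Y_1 \cup \cdots \cup Y_k]$, and if $Z_k \ne X$, Lemma~\ref{lem:dis} produces $g \in G$ with $g Z_k \cap Z_k = \varnothing$; then $gY \subseteq gZ_k$ is a fresh translate disjoint from all of $Y_1, \ldots, Y_k$.

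With the contraction estimate in hand, I would conclude as follows. Fix an end $e \in \Ends(X)$, which exists by the basic theory of dendrites. As in the proof of Lemma~\ref{lem:KM}, $e$ admits a decreasing basis of open neighbourhoods $V_n$ whose complements $Y_n := X \setminus V_n$ are closed and connected, hence proper sub-dendrites; using a compatible metric I arrange $\bigcap_n V_n = \{e\}$. Applying the contraction estimate to each $Y_n$ yields $g_n \in G$ with $\mu(g_n Y_n) < 1/n$, equivalently $g_n^{-1}\mu(V_n) > 1 - 1/n$. A direct weak-$*$ verification, splitting $\int f \, d(g_n^{-1}\mu)$ into integrals over $V_n$ and $Y_n$ and using continuity of $f$ at $e$ together with $V_n \downarrow \{e\}$, yields $g_n^{-1}\mu \to \delta_e$, proving strong proximality.

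The delicate point is the inductive construction of infinitely many disjoint translates: it stalls precisely when $Z_k = X$ for some~$k$. I would first try to avoid it by starting with $Y$ sufficiently small (for example a sub-dendrite near a regular point, available by density of regular points in any arc). Should this geometric refinement prove insufficient, a Helly-type fallback is available: if only finitely many pairwise disjoint translates $Y_1, \ldots, Y_k$ exist, maximality yields $gU \cap U \ne \varnothing$ for all $g \in G$ with $U = Y_1 \cup \cdots \cup Y_k$, whence Lemma~\ref{Helly} applied to the $G$-orbit of $Z_k = [U]$ produces a non-empty $G$-invariant sub-dendrite; dendro-minimality forces this to equal $X$, and analysing the resulting finite $G$-orbit on sub-dendrites (via the first-point retraction and Lemma~\ref{lem:FP}) should contradict non-elementarity.
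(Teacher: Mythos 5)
Your reduction of strong proximality to the statement ``$\inf_{g}\mu(gY_n)=0$ for the sets $Y_n=X\setminus V_n$'', together with the final weak-$*$ verification, is fine. The gap is entirely in the contraction estimate, and it is a real one. First, your primary fix (``start with $Y$ sufficiently small'') is inapplicable: the sets you must contract are forced to be the \emph{complements} of small neighbourhoods of the end $e$, hence large; you have no freedom to shrink them. Second, the inductive construction of disjoint translates genuinely stalls for such sets, and Lemma~\ref{lem:dis} gives you no control over \emph{which} translate it returns, so you cannot steer the induction. Third, the Helly fallback does not produce a contradiction: from a maximal disjoint family you correctly deduce $[Y_1\cup\cdots\cup Y_k]=X$, but this is perfectly consistent with a non-elementary dendro-minimal action. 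Concretely, let $X$ be the end-compactification of the $3$-regular tree with $G$ its automorphism group, and let $Y$ be the union of two of the three closed branches at a vertex $v$; taking the analogous set at an adjacent vertex gives a maximal pairwise disjoint family of size $2$ whose span is all of $X$, and there is no ``finite $G$-orbit of sub-dendrites'' to analyse. In fact the estimate $\inf_g\mu(gY)=0$ for complements of end-neighbourhoods is essentially \emph{equivalent} to strong proximality, so the proposal reduces the theorem to an unproven restatement of itself.

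The paper circumvents this by never asking a single application of Lemma~\ref{lem:dis} to shrink the measure; instead it relocates the mass in two steps. One picks infinitely many \emph{distinct} ends $y_n\neq x$, so that $m(\{y_n\})\to 0$, and chooses regular points $z_n$ near $y_n$ with $m(U_{z_n}(y_n))\le m(\{y_n\})+1/n$; since $U_{z_n}(x)$ and $U_{z_n}(y_n)$ cover $X$, Lemma~\ref{lem:dis} yields $g_n$ pushing the large-measure set $U_{z_n}(x)$ into the small-measure region $U_{z_n}(y_n)$ near the auxiliary end $y_n$, and a second application yields $h_n$ pushing $U_{x_n}(y_n)$ into the shrinking neighbourhood $U_{x_n}(x)$ of the target end. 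The composition $h_ng_n$ sends a set of measure tending to $1$ into a neighbourhood basis of $x$, with no disjointness counting needed. If you want to salvage your scheme, you would need to replace the summability argument by some such relocation of mass to a region that is small for reasons internal to $\mu$ (here: distinctness of the ends $y_n$), not for reasons of disjoint packing.
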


We immediately deduce Theorem~\ref{thm:prox-intro}; more precisely:

\begin{cor}
Let $G$ be a group with a non-elementary action on a dendrite $X$. Then the unique minimal invariant set $M$ of Lemma~\ref{lem:min} is a $G$-boundary in the sense of Furstenberg.
\end{cor}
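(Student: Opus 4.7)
The plan is to reduce the corollary to Theorem~\ref{thm:prox} applied to the action of $G$ on the unique minimal $G$-invariant sub-dendrite $[M]$ of Lemma~\ref{lem:min}, and then to transfer strong proximality from $[M]$ down to its closed $G$-invariant subset $M$.

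Minimality of $M$ is already the content of Lemma~\ref{lem:min}\ref{pt:min:top}, so the only real task is strong proximality. First I would check that the hypotheses of Theorem~\ref{thm:prox} are satisfied by the restricted action $G \action [M]$: it is dendro-minimal because $[M]$ is, by Lemma~\ref{lem:min}\ref{pt:min:den}, the unique minimal $G$-invariant sub-dendrite in $X$ and hence contains no proper $G$-invariant sub-dendrite; and it remains non-elementary because any $G$-fixed point, or $G$-invariant pair of points, lying in $[M] \se X$ would already witness elementarity of the original action on $X$, contrary to assumption. Theorem~\ref{thm:prox} then yields that $G \action [M]$ is strongly proximal.

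To deduce strong proximality of $G \action M$ itself, I would pass through the canonical embedding $\Prob(M) \hookrightarrow \Prob([M])$. Given $\mu \in \Prob(M)$, view it as an element of $\Prob([M])$; by strong proximality of $G \action [M]$, the weak-$*$ closure of the orbit $G\mu$ in $\Prob([M])$ contains a Dirac mass $\delta_x$ for some $x \in [M]$. Since $M$ is $G$-invariant, each $g\mu$ is supported on $M$; and since $M$ is closed in $[M]$, any weak-$*$ limit of such measures is again supported on $M$, forcing $x \in M$. Hence $\delta_x$ already lies in the weak-$*$ closure of $G\mu$ computed inside $\Prob(M)$, as required.

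I do not expect any real obstacle here: the substantive dynamical content is entirely packaged in Theorem~\ref{thm:prox}, and the remaining argument is the standard topological fact that probability measures supported on a closed subspace form a weak-$*$ closed subset of the ambient space of probability measures (a consequence of the portmanteau theorem).
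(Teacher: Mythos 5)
Your proposal is correct and follows exactly the paper's route: the paper's proof is the one-line ``Apply Theorem~\ref{thm:prox} to the dendrite $[M]$,'' and your write-up simply makes explicit the verification of the hypotheses and the (standard) transfer of strong proximality from $\Prob([M])$ to the weak-$*$ closed subset $\Prob(M)$. No gaps.
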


\begin{proof}[Proof of the corollary]
Apply Theorem~\ref{thm:prox} to the dendrite $[M]$.
\end{proof}

\begin{proof}[Proof of Theorem~\ref{thm:prox}]
We shall prove that for any $x\in\Ends(X)$ and any probability measure $m\in\Prob(X)$, the Dirac mass $\delta_x$ lies in $\overline{Gm}$.

Recall the notation $U_s(t)$ introduced in Section~\ref{sec:Tits}. We shall use the fact that when $t$ is an end point and $s$ varies over a given arc abutting to $t$, the family $U_s(t)$ is a \emph{nested} basis of neighbourhoods of $t$; this follows e.g.\ from~\cite[9.3]{Nadler}.

Since the action is non-elementary, there are infinitely many end points and therefore we can choose a sequence $(y_n)$  of \emph{distinct} end points $y_n\neq x$. In particular, $m(\{y_n\})\to 0$. We now choose a sequence $(x_n)$ of regular points $x_n\in [y_n, x]$ converging to $x$. Upon extracting sub-sequences, we can assume that $[x_n,x]\subset [x_m,x]$ holds for all $n\geq m$. Finally, we choose a sequence $(z_n)$ of regular points $z_n\in[y_n,x_n]$ such that $m(U_{z_n}(y_n))\leq m(\{y_n\})+1/n$; this is possible in view of the nested neighbourhood property.

Since $z_n$ is regular, the two sub-dendrites $U_{z_n}(x)$ and $U_{z_n}(y_n)$ cover $X$; likewise for $U_{x_n}(y_n)$ and $U_{x_n}(x)$. Therefore, by Lemma~\ref{lem:dis}, there are $g_n, h_n\in G$ such that $g_n(U_{z_n}(x))\subset U_{z_n}(y_n)$ and $h_n(U_{x_n}(y_n))\subset U_{x_n}(x)$. Using $U_{z_n}(y_n) \se U_{x_n}(y_n)$, we deduce $h_ng_n(U_{z_n}(x))\subset U_{x_n}(x)$. Since $m(U_{z_n}(x))\geq1-m(\{y_n\})-1/n\to 1$ and since $(U_{x_n}(x))$ is a nested basis of neighbourhoods of $x$, we have $(h_ng_n)_*m\to\delta_x$. 
\end{proof}

We now discuss the dynamics of individual homeomorphisms of dendrites.

\begin{defn}\label{defn:ab}
Let $g$ be a homeomorphism of a dendrite $X$. We say that an arc $[x,y]\se X$ is \emph{austro-boreal} for $g$ if
$$\Fix(g)\cap[x,y]=\{x,y\}$$
wherein $\Fix(g)$ denotes the compact set of $g$-fixed points in $X$.
\end{defn}

This is a local analogue of a hyperbolic behaviour for $g$. Observe that an austro-boreal arc for $g$ is $g$-invariant and that the induced action is conjugated to an action by translations on $\RR\cup\{\pm\infty\}$. There can, of course, be several austro-boreal arcs for $g$, even infinitely many, together with non-austro-boreal $g$-invariant arcs.

\medskip

The fixed-point set $\Fix(g)$, which is always non-empty (cf.\ Lemma~\ref{lem:FP}), can be complicated. For instance, even if $X$ is just an arc, $\Fix(g)$ can be a Cantor set. The following (exclusive) alternative is a disjunction between the connectedness of this set and the presence of an austro-boreal arc somewhere in the dendrite.

\begin{lem}\label{lem:connectedFix}
Let $g$ be a homeomorphism of a dendrite $X$. Then either $\Fix(g)$ is a sub-dendrite, or there exists an austro-boreal arc for $g$. 
\end{lem}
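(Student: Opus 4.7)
The plan is to argue by contraposition: assuming $\Fix(g)$ fails to be a sub-dendrite, we will exhibit an austro-boreal arc. Since $\Fix(g)$ is always closed and non-empty (Lemma~\ref{lem:FP}), the only obstruction to being a sub-dendrite is disconnectedness.

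Suppose then that $\Fix(g)$ is not connected. Pick two points $p,q\in \Fix(g)$ lying in distinct connected components of $\Fix(g)$, and consider the unique arc $[p,q]\se X$. Since $g(p)=p$ and $g(q)=q$, the image $g([p,q])$ is again an arc with endpoints $p$ and $q$; by uniqueness of arcs between two given points in a dendrite, we conclude $g([p,q])=[p,q]$, so $[p,q]$ is $g$-invariant.

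The next step is to observe that $[p,q]\not\se\Fix(g)$: indeed, if $[p,q]$ were pointwise fixed, then $[p,q]$ together with the components of $\Fix(g)$ containing $p$ and $q$ would be a connected subset of $\Fix(g)$ containing both $p$ and $q$, contradicting the choice of $p,q$. Hence $[p,q]\setminus \Fix(g)$ is a non-empty set which is open in $[p,q]$. Identifying $[p,q]$ with a compact interval, this complement is a countable disjoint union of open intervals; pick any one of them, say the component with closure $[a,b]\se[p,q]$. By construction $a,b\in \Fix(g)$ (they may equal $p$ or $q$), while no interior point of $[a,b]$ is fixed by $g$. In particular $\Fix(g)\cap[a,b]=\{a,b\}$, so $[a,b]$ is austro-boreal, completing the proof.

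The argument is essentially one-dimensional once the invariance of $[p,q]$ under $g$ is established; the only subtlety, and the step one must take care with, is the uniqueness of arcs between fixed points, which is exactly what lets us pass from ``$g$ fixes the endpoints'' to ``$g$ preserves the whole arc''. This is a dendrite-specific feature with no analogue in general continua.
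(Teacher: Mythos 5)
Your proof is correct and follows essentially the same route as the paper's: both hinge on the observation that an arc between two fixed points is $g$-invariant by uniqueness of arcs, and then reduce to a one-dimensional analysis of $\Fix(g)$ inside that arc. You argue the contrapositive and extract a complementary interval of the closed set $\Fix(g)\cap[p,q]$ directly, where the paper runs the same idea forwards via a minimality/density argument; the two are interchangeable.
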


\begin{proof}
Suppose that $g$ does not admits any austro-boreal arc. Observe that any arc between points in $\Fix(g)$ is $G$-invariant. Therefore, our assumption implies that any such arc contains a fixed point in its interior. By a minimality argument, we deduce that it contains a dense subset of fixed points and hence that this arc is fixed point-wise. It follows that $\Fix(g)$ is connected and therefore it is a sub-dendrite.
\end{proof}

The dynamics of $g$ can be described further in the presence of austro-boreal behaviour, see Proposition~\ref{prop:tectonic} below; but first, we argue that this case does indeed occur in any non-elementary group:

\begin{thm}
Let $G$ be a group with a non-elementary action on a dendrite $X$.

Then $G$ contains an element admitting an austro-boreal arc in $X$.
\end{thm}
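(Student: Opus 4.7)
My plan is to revisit the ping-pong construction used in the proof of Theorem~\ref{thm:Tits} and extract austro-boreal behaviour from it. First, by Lemma~\ref{lem:min} I may replace $X$ by the unique minimal $G$-invariant sub-dendrite, since an austro-boreal arc inside that sub-dendrite is also austro-boreal in $X$ (the fixed-point sets agree on subarcs of an invariant sub-dendrite). Thus I assume the action is dendro-minimal. The Tits-alternative construction then produces $a \in G$ together with two disjoint closed sub-dendrites $A_- = U_p(x)$ and $A_+ = a(X \setminus A_-)$ such that $a^{\pm 1}(X \setminus A_\mp) = A_\pm$.

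Since $A_+ \subseteq X \setminus A_-$ (they are disjoint), one has $a(A_+) \subseteq A_+$, and symmetrically $a^{-1}(A_-) \subseteq A_-$. The nested intersection $F_+ = \bigcap_{n \geq 0} a^n(A_+)$ is a non-empty $a$-invariant sub-dendrite, since a decreasing intersection of sub-dendrites remains closed, connected, and non-empty. Lemma~\ref{lem:FP} applied to $a$ acting on $F_+$ provides a fixed point $x_+ \in F_+ \subseteq A_+$; symmetrically a fixed point $x_- \in A_-$. The unique arc $[x_-, x_+]$ is then $a$-invariant, and $a$ restricts to a homeomorphism of this arc fixing both endpoints.

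The crux is to show that $a|_{[x_-, x_+]}$ is not the identity. Here I would use the fact that $A_-$ and $A_+$ are disjoint closed subsets of $X$, so the connected arc $[x_-, x_+]$ cannot be covered by their union; hence there exists $y \in [x_-, x_+] \setminus (A_- \cup A_+)$. Then $y \in X \setminus A_-$, and consequently $a(y) \in A_+$. Were $a(y) = y$, we would have $y \in A_+$, a contradiction. Thus $a|_{[x_-, x_+]}$ is a non-identity homeomorphism of an arc fixing both endpoints, so its fixed-point set on the arc is a proper closed subset containing $\{x_-, x_+\}$. The closure of any connected component of its complement is then a subarc $[x', y']$ with $\Fix(a) \cap [x', y'] = \{x', y'\}$, which is an austro-boreal arc for $a$.

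The main obstacle is the non-identity assertion on $[x_-, x_+]$; everything else consists of packaging standard facts (nested intersections of sub-dendrites, Lemma~\ref{lem:FP}, uniqueness of arcs between fixed points). The non-identity step hinges precisely on the existence of a point of the arc outside $A_- \cup A_+$, which is a clean consequence of connectedness of the arc together with disjointness of $A_-$ and $A_+$, so even this obstacle dissolves once the ping-pong sub-dendrites from the Tits alternative are in hand.
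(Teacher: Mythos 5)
Your proposal is correct and follows essentially the same route as the paper: both run the ping-pong construction from Theorem~\ref{thm:Tits} to get $a\in G$ with nested images in two disjoint sub-dendrites, intersect to obtain $a$-invariant sub-dendrites with fixed points via Lemma~\ref{lem:FP}, and then exhibit a non-fixed point on the arc joining them. The only (cosmetic) difference is that the paper concludes by observing $\Fix(a)$ is disconnected and invoking Lemma~\ref{lem:connectedFix}, whereas you extract the austro-boreal subarc of $[x_-,x_+]$ by hand, which amounts to reproving the relevant direction of that lemma.
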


\begin{proof}
We argue again as in the beginning of the proof of Theorem~\ref{thm:Tits}. Thus, we have an element $a\in G$ and disjoint sub-dendrites $U_p(x)$, $U_q(y)$ in $X$ such that $a(X\setminus U_p(x)) \se U_q(y)$. In particular, $a(U_q(y)) \se U_q(y)$ and hence the intersection $K_+$ of $a^n(U_q(y))$ over all $n\geq 0$ is a (non-empty) $a$-invariant sub-dendrite.

On the other hand, $a\inv (U_p(x)) \se U_p(x)$. Therefore, we deduce similarly that the intersection $K_-$ of $a^n(U_p(x))$ over all $n\leq 0$ is an $a$-invariant sub-dendrite.

We now have two $a$-invariant sub-dendrites $K_\pm$; by Lemma~\ref{lem:FP}, there must be an $a$-fixed point in each. If the fixed-point set of $a$ in $X$ were connected, it would now contain $[p,q]$ because $K_+ \se U_q(y)$ and $K_- \se  U_p(x)$. This is not possible because $a(p) \in  U_q(y)$. The conclusion now follows from Lemma~\ref{lem:connectedFix}.
\end{proof}

Let $g$ be an arbitrary homeomorphism of a dendrite $X$. If $I=[x,y]$ is an austro-boreal arc for $g$, we write $I'=I\setminus \{x,y\}$ and denote by $O(I)\se X$ the component of $X\setminus \{x,y\}$ that contains $I'$. Further, we denote by $D(g)$ the union of all $O(I)$ when $I$ ranges over all austro-boreal arcs of $g$. Finally, let $K(g)\se X$ be the complement of $D(g)$.

\smallskip
This notation provides the following tectonic decomposition:

\begin{prop}\label{prop:tectonic}
Let $X$ be a dendrite and $g$ an arbitrary homeomorphism of $X$. Then the decomposition $X=D(g) \sqcup K(g)$ has the following properties.
\begin{enumerate}[label=(\roman*)]
\item $D(g)$ is a (possibly empty) open $g$-invariant set on which $g$ acts properly discontinuously. In particular, $K(g)$ is a non-empty compact $g$-invariant set.\label{pt:tectonic:dec}
\item Every connected component of $D(g)$ is of the form $O(I)$ for some austro-boreal arc $I$, and $g$ acts co-compactly on each $O(I)$.\label{pt:tectonic:D}
\item $K(g)$ is a disjoint union of sub-dendrites of $X$. Moreover, $g$ preserves each such sub-dendrite and has a connected fixed-point set in each.\label{pt:tectonic:K}
\end{enumerate}
\end{prop}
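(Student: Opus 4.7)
My plan is to first establish a preliminary observation that feeds everything else: for every austro-boreal arc $I=[x,y]$ one has $\Fix(g)\cap O(I)=\varnothing$, and in particular $\Fix(g)\subseteq K(g)$.  Given $p\in\Fix(g)\cap O(I)$ with $p\neq x,y$, I will let $c$ be the median of $\{p,x,y\}$ in the dendrite.  The cases $c\in\{p,x,y\}$ each give a contradiction: $c=p$ would place $p$ inside $[x,y]\cap\Fix(g)=\{x,y\}$, while $c=x$ or $c=y$ would put the corresponding endpoint between $p$ and $I'$ and thus separate them in $X$, so $p$ could not lie in the component $O(I)$.  The remaining case $c\in I'$ is excluded by noting that $g$ preserves each of the three arcs $[p,x]$, $[p,y]$, $[x,y]$ (fixed endpoints), hence fixes their unique common point $c$, contradicting the austro-boreal condition $\Fix(g)\cap I'=\varnothing$.

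With $\Fix(g)\subseteq K(g)$ in hand, parts (i) and (ii) drop out.  Each $O(I)$ is a connected component of the open set $X\setminus\{x,y\}$, so open by local connectedness, and $g$-invariant since $g$ fixes $\{x,y\}$; hence $D(g)$ is open and $g$-invariant.  The topological boundary $\partial O(I)=\{x,y\}$ lies in $\Fix(g)\subseteq K(g)$, so $O(I)$ is clopen inside any connected subset of $D(g)$ containing it, identifying components of $D(g)$ with the individual $O(I)$.  For proper discontinuity and co-compactness on a single $O(I)$ I will use the continuous, $g$-equivariant first-point retraction $\pi\colon\overline{O(I)}\to I$ (\cite[10.25]{Nadler}).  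Because $x$ and $y$ are end points of $\overline{O(I)}$, one has $\pi^{-1}(I')=O(I)$; and since $g$ acts on $I'\cong\RR$ as a translation, $F=\pi^{-1}([z,g(z)])$ for any $z\in I'$ is a compact fundamental domain with $\bigcup_{n\in\ZZ}g^nF=O(I)$, giving both properties.  Proper discontinuity extends to $D(g)$ once one observes that a compact subset, covered by the disjoint open sets $O(I_\alpha)$, meets only finitely many of them.

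The main obstacle is the $g$-invariance of each component $K_0$ of $K(g)$ in~(iii); the connectedness of $\Fix(g)\cap K_0$ will then be a short consequence.  I will argue by contradiction: assume $g(K_0)=K_1\neq K_0$ and let $B=[p_0,p_1]$ be the unique bridge arc between the disjoint sub-dendrites $K_0$ and $K_1$, so $B\cap K_i=\{p_i\}$.  Since $g(p_0)\in K_1$ differs from $p_0\in K_0$, we have $p_0\notin\Fix(g)$; closedness of $\Fix(g)$ yields a neighbourhood $[0,\delta)$ of $p_0$ in $B\simeq[0,L]$ disjoint from $\Fix(g)$.  Here is where the preliminary pays off: every component of the open set $D(g)\cap B$ is a connected subset of $D(g)$, hence contained in a single $O(I)$, and its endpoints in $B$ lie in $\partial O(I)\subseteq\Fix(g)$.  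Therefore no such component can intersect $[0,\delta)$, which forces $[0,\delta)\subseteq K(g)$.  But the connected interval $(0,\delta)$ must then lie in a single component $K_{j'}$ of $K(g)$, necessarily distinct from $K_0$ (since $K_0\cap B=\{p_0\}$), and closedness of $K_{j'}$ gives $p_0\in K_{j'}$, contradicting the disjointness of distinct components.  Granted invariance, for any $p,q\in\Fix(g)\cap K_0$ the arc $[p,q]\subseteq K_0\subseteq K(g)$ is $g$-invariant; any maximal component of $[p,q]\setminus\Fix(g)$ would be an austro-boreal arc with interior in $D(g)$, which is impossible inside $K(g)$, so $[p,q]\subseteq\Fix(g)$ and the fixed-point set in $K_0$ is a sub-dendrite.
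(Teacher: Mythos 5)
Your proof is correct. For parts (i) and (ii) you follow essentially the same route as the paper: the first-point retraction $\overline{O(I)}\to I$ together with the translation action on $I'$ gives proper discontinuity and co-compactness, and the fact that $\partial O(I)=\{x,y\}$ consists of fixed points identifies each $O(I)$ as a clopen piece of $D(g)$, hence a full component. Your preliminary observation that $\Fix(g)\cap O(I)=\varnothing$ (via the median of $\{p,x,y\}$) is asserted without proof in the paper (which simply notes that $K(g)\supseteq\Fix(g)$; it also follows from proper discontinuity on $O(I)$), so making it explicit is reasonable. Where you genuinely diverge is the invariance of the components of $K(g)$ in (iii). The paper argues directly: if $p\in Y$ but $gp\notin Y$, then $[p,gp]$ must contain a whole austro-boreal arc $I=[x,y]$ (it enters and leaves $O(I)$ through its two-point boundary), and applying $g$ to the relation ``$x$ separates $p$ from $y$'' yields ``$x$ separates $gp$ from $y$'', contradicting the order of $x,y$ along $[p,gp]$. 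You instead run the contradiction through the bridge arc $B$ between $K_0$ and $g(K_0)$, showing that an initial segment of $B$ at $p_0$ must lie in $K(g)$ yet in a component other than $K_0$, which is incompatible with $p_0$ lying in both; this works because, as you note, any component of $D(g)\cap B$ has its endpoints in some $\partial O(I)\subseteq\Fix(g)$, while $p_0\notin\Fix(g)$. Both arguments rest on the same structural fact --- one cannot cross $O(I)$ without passing through the fixed points $x,y$ --- but the paper's is shorter, while yours localises the contradiction at $p_0$ and avoids the order-reversal trick. Your last step re-proves the relevant case of Lemma~\ref{lem:connectedFix} inside $K_0$ rather than citing it; the only thing you should add is that $\Fix(g)\cap K_0$ is non-empty (Lemma~\ref{lem:FP} applied to $g|_{K_0}$), which the paper obtains for free from Lemma~\ref{lem:connectedFix}.
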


There are at most countably many austro-boreal arcs for $g$, or equivalently, countably many components of $D(g)$. In fact, \emph{any} subset of $X$ has at most countably many components that are not reduced to a point, see e.g.~\cite[V.2.6]{Whyburn_book}. Nonetheless, $K(g)$ can have $2^{\aleph_0}$ connected components with Cantor spaces of fixed points.

\begin{proof}[Proof of Proposition~\ref{prop:tectonic}]
If $I$ is an austro-boreal arc for $g$, then $g$ preserves $O(I)$. The fact that the $g$-action on $I'$ is conjugated to a translation action on the line implies that the $g$-action on $O(I)$ is properly discontinuous and co-compact. (This is particularly apparent, for instance, if we consider the continuous retraction $O(I) \to I'$ provided by the \emph{first-point map} $X\to I$, see e.g.~10.24 and~10.25 in~\cite{Nadler}.)

Next, if $I_1$ and $I_2$ are two distinct austro-boreal arcs for $g$, then $I'_1$ and $I'_2$ are disjoint. This follows from Definition~\ref{defn:ab} because the boundary of $I'_1\cap I'_2$ in $X$ must be $g$-fixed. It further follows that $O(I_1)$ and  $O(I_2)$ are disjoint because every non-empty connected invariant subset of $O(I_i)$ must meet $I'_i$.

At this point, all statements of~\ref{pt:tectonic:dec} and~\ref{pt:tectonic:D} are justified, noting that $K(g)$ is non-empty since it contains $\Fix(g)$.

Turning to~\ref{pt:tectonic:K}, let $Y$ be a connected component of $K(g)$. Then $Y$ is closed in $X$ since $K(g)$ is so, and thus $Y$ is a sub-dendrite of $X$. Let $p\in Y$ and suppose for a contradiction that $gp\notin Y$. Then the arc $[p, gp]$ in $X$ meets $O(I)$ for some austro-boreal arc $I=[x,y]$. It follows that $[p, gp]$ contains $I$ since the boundary of $O(I)$ in $X$ is $\{x,y\}$.  Notice that $\{p,gp\}\cap\{x,y\}=\varnothing$ since $x,y$ are $g$-fixed; upon changing the order of our labels $x,y$, the point $x$ separates $p$ from $y$. Thus $gx$ separates $gp$ from $gy$, i.e.\ $x$ separates $gp$ from $y$, which contradicts our choice of the order. We have proved that $g$ preserves $Y$; now the fixed-point set of $g$ in $Y$ is connected by Lemma~\ref{lem:connectedFix}.
\end{proof}

\section{Curves}\label{sec:curves}
Many of our results for dendrites have known counterparts for group actions on the circle~$\SSS^1$. An adjustment that sometimes needs to be made for such analogies is that elementary actions on the circle should include those with finite orbits of size~$>2$. In fact, the usual notion of elementarity that allows for satisfying theorems turns out to be that the group \emph{preserves a probability measure}. This is natural since $\SSS^1$ is homogeneous (as is the Menger curve~\cite[Thm.~III]{Anderson58a}, but no other curve~\cite[Thm.~XIII]{Anderson58b}). It is also compatible with the case of dendrites by Proposition~\ref{prop:elementary}.

\medskip
However, our results certainly cannot hold for the most general curves. To begin with, every residually finite countable group (this includes all lattices of Theorem~\ref{thm:higher}) admits a \emph{free} action on the Menger curve, as follows from~\cite[Thm.~1]{Dranishnikov88}. (It is apparently unknown if this holds for all countable groups~\cite[2.20]{Bogatyi-Fedorchuk}, though faithful actions exist~\cite[Prop.~2]{Levitt04}, \cite{Kawamura}. In this context, we recall that every countable group is the full homeomorphism group of some curve~\cite{deGroot-Wille}.)

One could argue that the free action from~\cite{Dranishnikov88} preserves a probability measure since it comes from an action of the profinite completion. Yet for non-amenable groups, there is always an action without invariant probability measure on some more general curve if we relax the second countability assumption, as the following example shows. Perhaps a technical modification of that example could provide metrisable examples.

\begin{example}\label{ex:SC}
Let $G$ be a countable group. We shall assume that $G$ is finitely generated; this will be no restriction for the construction thanks to the HNN embedding theorem~\cite{HNN}. Consider the $G$-action on the topological realization $\sG$ of a locally finite Cayley graph of $G$. This extends to an action by homeomorphism on the Stone--Cech compactification $\beta\sG$ of the locally compact space $\sG$. By Prop.~5 in~\cite{Katetov50}, the compact connected space $\beta\sG$ has dimension one. It remains only to show that if $G$ preserves a probability measure on $\beta\sG$ then $G$ is amenable. The restriction map $C(\beta \sG)\cong C_{\mathrm b}(\sG)\to \ell^\infty (G)$ admits a $G$-equivariant (linear, unital, positive) right inverse $\ell^\infty (G)\to C_{\mathrm b}(\sG)$ given by extending functions affinely on the edges. Therefore, the probability measure provides an invariant mean on $\ell^\infty (G)$.
\end{example}

It turns out that there is a setting to which most of our results can be extended, namely \emph{local dendrites}. Although they can be defined abstractly as curves that are absolute neighbourhoods retracts, the key property for us is that they contain at most finitely many embedded copies of $\SSS^1$, see~\cite[\S46 VII]{Kuratowski_T2}. We deduce:

\begin{lem}\label{lem:local}
Let $G$ be a group acting by homeomorphisms on a local dendrite $X$. Then either $X$ is a dendrite or $G$ admits a finite index subgroup preserving a circle in $X$.\qed
\end{lem}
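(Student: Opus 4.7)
The plan is to exploit the cited structural fact that a local dendrite contains at most finitely many embedded copies of $\SSS^1$ (from~\cite[\S46 VII]{Kuratowski_T2}), together with the defining characterisation of dendrites as locally connected continua without simple closed curves.

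First I would observe that if $X$ is not a dendrite, then since $X$ is already a continuum and local dendrites are locally connected (being absolute neighbourhood retracts, or by unwinding the definition), the only defining property of dendrites that can fail is the absence of simple closed curves. Hence $X$ contains at least one simple closed curve, i.e.\ an embedded $\SSS^1$.

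Next, let $\sC$ denote the (non-empty) collection of all subsets of $X$ that are simple closed curves. By the Kuratowski reference cited just above the lemma, $\sC$ is finite. Since the image of a simple closed curve under a homeomorphism of $X$ is again a simple closed curve, the $G$-action on $X$ induces a permutation action on the finite set $\sC$, giving a homomorphism $G\to\mathrm{Sym}(\sC)$. Its kernel $G_0$ has finite index in $G$ and preserves setwise each circle belonging to $\sC$; in particular, picking any $C\in\sC$, the finite index subgroup $G_0$ preserves the circle $C$ in $X$, as required.

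There is no real obstacle here: everything follows once one invokes the finiteness of the set of embedded circles, which is the substantive input from the theory of local dendrites. The only point worth flagging is that one should not confuse the statement with a pointwise fixing assertion — the finite index subgroup preserves the circle setwise, not pointwise, which is exactly what the statement of the lemma asserts and what is needed for subsequent arguments that reduce questions about local dendrites to the combination of dendrite results (Theorem~\ref{thm:higher}, etc.) and circle rigidity results.
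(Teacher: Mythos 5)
Your proposal is correct and follows exactly the route the paper intends: the lemma is stated with an immediate \qed precisely because it is deduced from the preceding observation that a local dendrite contains only finitely many embedded copies of $\SSS^1$, so that $G$ permutes these finitely many circles and the kernel of the induced map to the symmetric group is the desired finite index subgroup. Your additional remark that a local dendrite failing to be a dendrite must contain a simple closed curve (since local connectedness is automatic) is the correct way to handle the dichotomy.
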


Here is a case where no work at all is needed to combine our results on dendrites with results for the circle:

\begin{proof}[Proof of Corollary~\ref{cor:higher:loc}]
Suppose that $\Gamma$ acts on a local dendrite $X$. If $X$ is a dendrite, then we are done by Theorem~\ref{thm:higher}. Otherwise, let $\Gamma' < \Gamma$ be a finite index subgroup as in Lemma~\ref{lem:local}. The group $\Gamma'$ is still a lattice in the ambient simple algebraic group and therefore we can apply the theorem of~\cite{Ghys99,Burger-Monod1} which states that any $\Gamma'$-action on the circle has a finite orbit. It follows that $\Gamma$ also has a finite orbit in $X$.
\end{proof}

We now turn to an extension of Theorem~\ref{thm:coamen}. This time we need the broader notion of elementarity discussed above, as for instance even the $\SSS^1$-action on itself illustrates.

\begin{thm}\label{thm:coamen:S1}
Let $G$ be a group admitting two commuting co-amenable subgroups.

Then any  $G$-action on a local dendrite preserves a probability measure.
\end{thm}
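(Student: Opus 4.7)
The plan is to split into two cases via Lemma~\ref{lem:local}, handling the dendrite case directly through Theorem~\ref{thm:coamen} and reducing the remaining case to a classical circle analogue.

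If $X$ is itself a dendrite, then Theorem~\ref{thm:coamen} immediately implies that the $G$-action is elementary, and Proposition~\ref{prop:elementary} converts this into a $G$-invariant probability measure on $X$.

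Otherwise, Lemma~\ref{lem:local} provides a finite-index subgroup $G_C\leq G$ preserving some circle $C\subseteq X$. I would then consider the two commuting subgroups $K_i:=H_i\cap G_C$, each of which has finite index in $H_i$. Both act on $C\cong\SSS^1$, and the key intermediate step is to force one of $K_1,K_2$ to preserve a probability measure on $C$. This is the circle analogue of Corollary~\ref{cor:product}, which I would establish via centralizer rigidity in $\Homeo^+(\SSS^1)$: if a subgroup acts on $\SSS^1$ without invariant probability measure, then its centralizer in $\Homeo^+(\SSS^1)$ acts with only finite orbits. Thus if $K_1$ fails to preserve a probability measure on $C$, its commuting subgroup $K_2$ has finite orbits on $C$ and hence preserves some probability measure $\mu$ on $C$.

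To upgrade $\mu$ to a $G$-invariant probability measure on $X$, I would view $\mu$ as a Borel measure on $X$ and average its $H_2$-translates, a finite set since $K_2$ has finite index in $H_2$; this gives an $H_2$-invariant probability measure on $X$. Applying co-amenability of $H_2$ in $G$ to the affine $G$-action on $\Prob(X)$ then produces the desired $G$-invariant probability measure on $X$. The main obstacle is the centralizer rigidity step for $\Homeo^+(\SSS^1)$: while classical, it requires the standard trichotomy for subgroups of $\Homeo^+(\SSS^1)$ and a careful treatment when the unique minimal set is a Cantor set, by passing to the semi-conjugate minimal action on $\SSS^1$ obtained by collapsing the gaps.
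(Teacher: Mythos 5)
Your proof is correct and reaches the same conclusion, but it organizes the circle case differently from the paper. The paper first invokes its Lemma~\ref{lem:coamen} to conclude that the $H_i\cap G'$ remain co-amenable in the finite-index circle stabiliser $G'$, and then argues entirely inside $G'$ by contradiction: if neither subgroup preserves a measure, their unique minimal sets coincide, one passes to the minimal quotient circle, and an infinite-order element of $H_1$ (supplied by Margulis' alternative) commutes with the minimally acting $H_2$, hence is conjugate to an irrational rotation whose unique invariant measure must then be $H_2$-invariant --- a contradiction. You instead discard co-amenability at the finite-index level, use only that $K_1=H_1\cap G_C$ and $K_2=H_2\cap G_C$ commute, prove a circle analogue of Corollary~\ref{cor:product} via centralizer rigidity, and re-import co-amenability of $H_2$ in $G$ only at the very end on $\Prob(X)$; this makes Lemma~\ref{lem:coamen} unnecessary and isolates a reusable statement, at the cost of needing the full trichotomy and the Denjoy-type collapsing argument rather than a single explicit rotation element. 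One precision is needed in your key lemma: as literally stated (``the centralizer acts with only finite orbits'') it fails when the unique minimal set of $K_1$ is a Cantor set, since an element of the centralizer descending to a finite-order rotation on the collapsed circle may still act with infinite orbits \emph{inside the gaps}. What the argument actually gives, and all you need, is that the centralizer has finite orbits \emph{on the unique minimal set} of $K_1$ (the kernel of the map to the finite cyclic centralizer of the minimal quotient fixes the minimal set pointwise, because singleton fibres are dense there); a single such finite orbit already yields the $K_2$-invariant measure $\mu$. You should also intersect with the orientation-preserving subgroup before quoting the centralizer results, which only costs another finite-index passage absorbed by your final averaging step.
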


Although the following lemma is very simple, we warn the reader that the statement would fail if $G'$ were merely supposed co-amenable in $G$, even with $G'$ normal in $G$ and containing $H$, see~\cite{Monod-Popa}.

\begin{lem}\label{lem:coamen}
Let $G$ be a group, $H<G$ a co-amenable subgroup and $G'<G$ a subgroup of finite index. Then $G'\cap H$ is co-amenable in $G'$.
\end{lem}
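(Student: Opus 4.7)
My plan is to establish the affine fixed-point characterisation of co-amenability of $G' \cap H$ in $G'$ by averaging a suitable mean on $G/H$ down to $G'/(G' \cap H)$ and transporting it to an arbitrary compact convex $G'$-set.

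First I would unpack the $G'$-orbit structure of $G/H$. The map $g'(G' \cap H) \mapsto g'H$ is a $G'$-equivariant bijection between $G'/(G' \cap H)$ and the $G'$-orbit $O_1 := G'H/H \subseteq G/H$. The full decomposition $G/H = O_1 \sqcup \cdots \sqcup O_r$ into $G'$-orbits is indexed by the double cosets $G'\backslash G/H$, and the finite index hypothesis forces $r \leq [G:G'] < \infty$. The crucial combinatorial point is that the finite set $\{O_1, \dots, O_r\}$ inherits a \emph{transitive} $G$-action, because $G$ acts transitively on $G/H$.

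Next I would invoke co-amenability of $H$ in $G$ to obtain a $G$-invariant mean $m$ on $G/H$ (apply the fixed-point definition to the compact convex $G$-set of means on $G/H$, starting from the $H$-fixed mean $\delta_{eH}$). Since $m$ pushes forward to a $G$-invariant probability measure on the finite transitive $G$-set of orbits, this pushforward must be uniform, i.e.\ $m(O_i) = 1/r$ for every $i$. The normalised restriction $r \cdot m|_{O_1}$ is then automatically $G'$-invariant (as $G'$ preserves $O_1$) and yields the desired $G'$-invariant mean on $O_1 \cong G'/(G' \cap H)$.

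Finally, to convert this mean into the required fixed point, let $K$ be any compact convex set with continuous affine $G'$-action and $(G' \cap H)$-fixed point $k_0$. The map $g'(G' \cap H) \mapsto g' k_0$ is well-defined (by the $(G' \cap H)$-fixedness of $k_0$) and $G'$-equivariant, and pushing the mean on $G'/(G' \cap H)$ forward along it produces a $G'$-invariant Borel probability measure on $K$ via Riesz representation. Its barycentre is the sought $G'$-fixed point, establishing co-amenability of $G' \cap H$ in $G'$.

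The step I expect to require the most care is the extraction of the $G$-invariant mean from co-amenability: for a discrete $G$ it is immediate from the fixed-point definition applied to the set of means on $\ell^\infty(G/H)$, but for a general topological group one must set up the correct compact convex $G$-space of means — typically on $L^\infty(G/H)$ with respect to a quasi-invariant measure class — so that the translation action is continuous enough for the fixed-point definition to apply. The rest of the argument, being purely combinatorial in its core, transfers without modification.
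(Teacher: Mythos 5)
There is a genuine gap in the middle step. You assert that the finite set $\{O_1,\dots,O_r\}$ of $G'$-orbits on $G/H$ ``inherits a transitive $G$-action'' and deduce that the pushforward of the invariant mean is uniform, i.e.\ $m(O_i)=1/r$. Neither claim is correct unless $G'$ is normal in $G$: left translation by $g_0\in G$ sends the $G'$-orbit $G'gH/H$ to $g_0G'gH/H$, which is in general not a $G'$-orbit, so the left $G$-action on $G/H$ does not descend to the double coset space $G'\backslash G/H$. The uniformity conclusion itself is false: take $G=S_3$, $H=\langle(12)\rangle$, $G'=\langle(13)\rangle$. Then $G/H$ has three points, the unique $G$-invariant mean is uniform, and the two $G'$-orbits have sizes $2$ and $1$, hence masses $2/3$ and $1/3$, not $1/2$ each.

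Fortunately, your argument only needs $m(O_1)>0$, and this can be recovered without any action on the orbit set. Write $G=\bigsqcup_{i=1}^{k} g_iG'$ with $k=[G:G']$; then the translates $g_iO_1=g_iG'H/H$ cover $G/H$, and each has mass $m(O_1)$ by $G$-invariance of $m$, so $1\le k\cdot m(O_1)$ and $m(O_1)\ge 1/[G:G']>0$. Renormalising $m|_{O_1}$ then gives the $G'$-invariant mean you want, and the rest of your proof (the identification $O_1\cong G'/(G'\cap H)$ and the conversion of the mean into a fixed point on an arbitrary compact convex $G'$-set) goes through. This covering argument is exactly the one the paper uses, phrased contrapositively: if $m(O_1)=0$ then the finitely many translates of $O_1$ would force $m(G/H)=0$, a contradiction. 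Your concern about setting up means correctly in the topological setting is legitimate but is handled by citing the standard equivalences (Eymard), as the paper does.
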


\begin{proof}[Proof of the lemma]
One of the equivalent characterisation of co-amenability is the existence of a $G$-invariant mean $\mu$ on the coset space $G/H$~\cite{Eymard72}. We can realize $G'/(G'\cap H)$ as a $G'$-invariant subset of $G/H$. If $\mu$ gives positive mass to this subset, then we can renormalize $\mu$ to witness the co-amenability of $G'\cap H$ in $G'$. Otherwise, we reach a contradiction because $G/H$ can be covered by finitely many $G$-translates of $G'/(G'\cap H)$ using coset representatives for $G'$ in $G$.
\end{proof}

\begin{proof}[Proof of Theorem~\ref{thm:coamen:S1}]
Let $X$ be a local dendrite with a $G$-action.
In view of Theorem~\ref{thm:coamen}, we can assume that $X$ contains a simple closed curve. By Lemma~\ref{lem:coamen}, we can assume that $G$ preserves such a curve by replacing $G$ with the stabiliser of a curve and the subgroups accordingly. Therefore it remains only to consider the case where $X$ is the circle.

Let $H_1, H_2<G$ be two commuting co-amenable subgroups. We suppose for a contradiction that $G$ does not preserve a probability measure on $X$, and therefore neither $H_1$ nor $H_2$ do. Recall that each $H_i$ admits a unique minimal closed non-empty invariant subset $M_i\se X$, see e.g.~\cite[Thm.~2.1.1]{Navas_book}. By minimality and since the $H_i$ commute, we have $M_1=M_2$. For the purpose of reaching a contradiction, we may assume that $M_i=X$ after passing to a circle quotient, see p.~64 in~\cite{Navas_book} for the fact that an invariant measure on the quotient would lift. Now the action of any given element $h\in H_1$ can be conjugated to a rotation since it commutes to the group $H_2$ acting minimally, and at least some $h\in H_1$ is not of finite order (e.g.\ by applying Margulis' alternative~\cite{MargulisS1} to $H_1$). In particular this element $h$ has a unique invariant probability measure on $X\cong \SSS^1$, the Haar measure, since it generates a dense subgroup of $\SSS^1$. By uniqueness, $H_2$ preserves this measure, which is a contradiction.
\end{proof}

The cohomological obstruction of Theorem~\ref{thm:coho-intro} can be extended as follows.

\begin{cor}
Let $G$ be a group such that $\hb^2(G, V)=0$ for every unitary representation $V$.

Then any $G$-action on a local dendrite preserves a probability measure.
\end{cor}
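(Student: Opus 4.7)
My plan is to combine Lemma~\ref{lem:local} with Theorem~\ref{thm:coho-intro} on the dendrite side and with Ghys' characterisation of circle actions with invariant probability measures on the other side, relying on Shapiro's lemma to pass the cohomological hypothesis to finite-index subgroups.

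First I would apply Lemma~\ref{lem:local} to the given $G$-action on a local dendrite $X$. This splits the problem into two cases. In the first case, $X$ is itself a dendrite. If the $G$-action were non-elementary, Theorem~\ref{thm:coho-intro} would produce a unitary $G$-representation $V$ (without invariant vectors) and a non-trivial class in $\hb^2(G,V)$, contradicting the hypothesis. Therefore the action is elementary, and Proposition~\ref{prop:elementary}\ref{pt:el:proba} yields a $G$-invariant probability measure on $X$.

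In the second case, there is a finite-index subgroup $G_0\leq G$ preserving a circle $C\se X$, and we may pass to the further finite-index subgroup $G_1\leq G_0$ acting on $C$ by orientation-preserving homeomorphisms. The key point is that the vanishing hypothesis descends to $G_1$: for any unitary $G_1$-representation $W$, the induced representation $\mathrm{Ind}_{G_1}^{G}W$ is a unitary $G$-representation (it is finite-dimensional over $W$ since $[G:G_1]<\infty$), and Shapiro's lemma gives
\[
\hb^2(G_1,W)\;\cong\;\hb^2\bigl(G,\mathrm{Ind}_{G_1}^{G}W\bigr)\;=\;0.
\]
In particular $\hb^2(G_1,\RR)=0$, so the bounded real Euler class of the $G_1$-action on $C\cong \SSS^1$ must vanish. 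By Ghys' theorem that the bounded Euler class vanishes exactly when the action preserves a probability measure (equivalently, when it is semi-conjugate to an action by rotations), we obtain a $G_1$-invariant probability measure $\mu$ on $C$, which we view as a probability measure on $X$ supported on $C$.

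Finally, I would average $\mu$ over a set of coset representatives for $G_1$ in $G$ to produce a $G$-invariant probability measure on $X$, completing the proof. The main obstacle is the bookkeeping around finite-index passages: verifying that Shapiro's lemma applies in bounded cohomology with the exact coefficients needed, and that the orientation subgroup, the stabiliser of $C$, and the original $G$ all admit the same cohomological vanishing. Once this is in place, the dendrite case is immediate from Theorem~\ref{thm:coho-intro} and the circle case reduces to the classical Ghys invariant.
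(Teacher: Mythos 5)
Your proposal follows essentially the same route as the paper: split into cases via Lemma~\ref{lem:local}, dispatch the dendrite case with Theorem~\ref{thm:coho-intro} and Proposition~\ref{prop:elementary}, transfer the vanishing hypothesis to finite-index subgroups by bounded-cohomological induction (which preserves unitarity precisely because the index is finite), and conclude on the circle from the vanishing of the real bounded Euler class. The only difference is cosmetic: you make explicit the final averaging over coset representatives, which the paper leaves implicit.
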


\begin{proof}
The cohomological assumption made on $G$ is preserved by passing to finite index subgroups. Indeed, this follows by the appropriate version of cohomological induction, see~\cite[\S10.1]{Monod}; for subgroups of finite index, the induction modules used in bounded cohomology preserve unitarity. Therefore, we can again consider separately the cases when $G$ acts on a dendrite and when $G$ acts on the circle. In the first case, the statement follows from Theorem~\ref{thm:coho-intro}.

In the latter case, we can furthermore assume that $G$ preserves the orientation of the circle by passing again to a finite index subgroup. We consider the bounded Euler class~\cite{Ghys_EM} in $\hb^2(G, \ZZ)$. Since its image in $\hb^2(G, \RR)$ vanishes, the $G$-action is quasi-conjugated to an action by rotations (see e.g.~\cite[3.2]{Burger_UC}). Once again, we conclude as in~\cite[p.~64]{Navas_book} that $G$ preserves a probability already before quasi-conjugation.
\end{proof}

Finally, combining the Tits alternative of Theorem~\ref{thm:Tits} with Margulis' Tits alternative~\cite{MargulisS1}, we obtain:

\begin{cor}\label{cor:Tits:S1}
Let $G$ be a group acting on a local dendrite.

Then either $G$ contains a non-abelian free subgroup or it preserves a probability measure.
\end{cor}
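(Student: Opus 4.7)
The plan is to use Lemma~\ref{lem:local} to split the $G$-action on the local dendrite $X$ into the two cases it covers, and to apply the appropriate Tits alternative in each.

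First I would handle the case where $X$ is itself a dendrite. Theorem~\ref{thm:Tits} directly delivers the desired dichotomy: either $G$ contains a non-abelian free subgroup, or the action is elementary, and in the latter case Proposition~\ref{prop:elementary}\ref{pt:el:proba} produces a $G$-invariant probability measure on $X$.

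In the remaining case, Lemma~\ref{lem:local} yields a finite-index subgroup $G'\leq G$ that preserves an embedded circle $C\se X$. I would restrict the $G'$-action to $C\cong\SSS^1$ and invoke Margulis' Tits alternative for the circle~\cite{MargulisS1}: either $G'$ contains a non-abelian free subgroup (whence so does $G$), or $G'$ preserves a probability measure $\mu$ on $C$. I would view $\mu$ as a Borel probability measure on $X$ supported on $C$, and then average $\mu$ over a set of coset representatives for $G/G'$ (which is finite) to obtain a $G$-invariant probability measure on $X$, automatically supported on the finite union of $G$-translates of $C$.

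The main obstacle, if one may even call it that, is the final averaging step; this is entirely routine precisely because the index $[G:G']$ is finite. All of the substantive content is imported from Theorem~\ref{thm:Tits}, Lemma~\ref{lem:local}, and Margulis' circle alternative, so the proof amounts to bookkeeping.
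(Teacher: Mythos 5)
Your proof is correct and follows essentially the same route as the paper: split via Lemma~\ref{lem:local} into the dendrite case (handled by Theorem~\ref{thm:Tits}, with Proposition~\ref{prop:elementary} converting elementarity into an invariant measure) and the circle case (handled by Margulis' alternative applied to the finite-index subgroup, followed by averaging over $G/G'$). No gaps; the averaging step is exactly what the paper does as well.
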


\begin{proof}
Let $X$ be a local dendrite with a $G$-action. If $X$ is a dendrite, we apply Theorem~\ref{thm:Tits}. Otherwise, we apply Theorem~3 from~\cite{MargulisS1} to the subgroup $G'<G$ of Lemma~\ref{lem:local} acting on a circle in $X$. This result provides either a non-abelian free subgroup of $G'$, hence of $G$, or a probability measure $\mu$ on the circle preserved by $G'$. We can regard $\mu$ as a $G'$-fixed measure on $X$, and now the average over $G/G'$ of its $G$-translates provides a $G$-invariant probability measure on $X$.
\end{proof}

\section{Further considerations}\label{sec:further}
It is not difficult to design (non-trivial) dendrites without any homeomorphism, see e.g.\ p.~443 in~\cite{deGroot-Wille}. Much more strikingly, dendrites were constructed that are not homeomorphic to any subset of themselves~\cite{Miller32} and are even \emph{chaotic}, or \emph{totally heterogeneous}, see~\cite[\S3]{Besicovitch45}. (For the complexity of dendrite homeomorphisms and embeddings, see~\cite[Thm.~6.7]{Camerlo-Darji-Marcone} respectively~\cite{Marcone-Rosendal}.)

\medskip
However, other dendrites admit such a profusion of homeomorphisms that it seems impossible to associate any rigid structure to them (in contrast to Bowditch's non-nesting actions on dendrites). We shall illustrate this on \emph{universal dendrites}.

\medskip
Wa\.zewsi's universal dendrite $D_\infty$, introduced in~\cite[p.~9]{WazewskiPHD}, \cite[p.~57]{Wazewski23} with the notation $D^*$, has the following properties. Every branch point has infinite order, $\Br(D_\infty)$ is dense in $D_\infty$, and every dendrite can be embedded into $D_\infty$.

There are similar constructions of universal dendrites $D_n$ whose branch points all have order $n\in \NN$ and variants where several orders are allowed, see e.g.~\cite[\S6]{Charatonik-Dilks}.

For any $3\leq n\leq \infty$, the action of the homeomorphism group $\Homeo(D_n)$ on $D_n$ is non-elementary and dendro-minimal. In fact, much more is true:

\begin{prop}\label{prop:oligo}
The action of $\Homeo(D_n)$ on $\Ends(D_n)$ is oligomorphic.
\end{prop}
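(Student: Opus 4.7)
The plan is to attach to each $k$-tuple $e=(e_1,\dots,e_k)\in\Ends(D_n)^k$ a combinatorial invariant that takes only finitely many values and completely determines the $\Homeo(D_n)$-orbit of $e$. Setting $T_e:=[\{e_1,\dots,e_k\}]$, Lemma~\ref{lem:KM} shows that the end points of $T_e$ are precisely the distinct entries of $e$, so $T_e$ is a finite tree with at most $k$ leaves. The \emph{type} of $e$ is then the isomorphism class of the labelled abstract tree $(T_e;e_1,\dots,e_k)$, of which there are only finitely many for given $k$; oligomorphy therefore reduces to the assertion that two tuples of the same type lie in the same $\Homeo(D_n)$-orbit.

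To prove this assertion, given tuples $e,e'$ of the same type one first obtains a canonical homeomorphism $\sigma\colon T_e\to T_{e'}$ sending $e_i\mapsto e'_i$. The core of the argument is to \emph{extend} $\sigma$ to a homeomorphism of $D_n$. For each $p\in T_e$ the components of $D_n\setminus T_e$ accumulating at $p$ form a countable family, and via the first-point retraction $D_n\to T_e$ (10.24--25 in~\cite{Nadler}) the closure of each such component is a sub-dendrite meeting $T_e$ only at $p$. Since the branch points of $D_n$ have order $n$ and are dense in $D_n$, the same is true in each pendant closure, which by Wa\.zewski's topological characterisation of $D_n$ is thus itself homeomorphic to $D_n$ with $p$ an end point. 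I would then bijectively match the pendant family at $p$ with the corresponding family at $\sigma(p)$ and glue a root-preserving homeomorphism on each matched pair to obtain the required extension $h\in\Homeo(D_n)$.

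The main obstacle is the execution of this extension, especially at the dense set of interior points of $T_e$ which are also branch points of the ambient $D_n$, where additional pendant matching is required along each arc of $T_e$. The cleanest route is to exploit the ultrahomogeneity of $D_n$ as a Fra\"iss\'e limit of finite trees with branching order $n$ (see e.g.\ the constructions surveyed in~\cite{Charatonik-Dilks}): any partial isomorphism of finite sub-trees respecting the branching data extends to a self-homeomorphism of $D_n$, from which the orbit-classification by labelled type is formal. Alternatively, one may proceed by a direct back-and-forth with diameter control: at each stage only finitely many pendant pieces of diameter above a given threshold remain to be paired, which guarantees continuity of the glued map across $T_e$. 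Either way, the finiteness of the set of labelled types yields oligomorphy of $\Homeo(D_n)\action\Ends(D_n)$.
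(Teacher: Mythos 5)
Your first half coincides with the paper's proof: both arguments take the labelled combinatorial type of the spanned finite tree $T_e=[\{e_1,\dots,e_k\}]$ as the invariant, observe that only finitely many types occur for fixed $k$, and reduce oligomorphy to showing that the type determines the $\Homeo(D_n)$-orbit. (A minor point: Lemma~\ref{lem:KM} concerns end points of the \emph{ambient} dendrite, not of a subtree; that $\Ends(T_e)$ is exactly the set of distinct entries of $e$ is true, but follows instead from $T_e=\bigcup_i[e_1,e_i]$ together with the fact that each $e_i$, having order $1$ in $D_n$, has order $1$ in $T_e$.) Where you genuinely diverge is the extension step. The paper decomposes $D_n$ along the \emph{edges} of $T_e$: for adjacent nodes $v,w$ it takes $D_n(v,w)$, the closure of the component of $D_n\setminus\{v,w\}$ containing the interior of $[v,w]$ --- a piece which already absorbs the countable dense family of pendant sub-dendrites hanging off the interior of the arc --- and quotes Proposition~4.1 of~\cite{Charatonik91}, which says that each such piece is homeomorphic to $D_n$ by a homeomorphism sending $v,w$ to \emph{any} prescribed pair of ends; pasting finitely many such homeomorphisms at the nodes finishes the proof. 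You decompose at the \emph{points} of $T_e$ instead, which leaves exposed exactly the hard part that Charatonik's lemma packages away: matching the dense families of pendants along each arc requires an order isomorphism between two countable dense subsets of arcs built simultaneously with root-preserving homeomorphisms of the pendant copies of $D_n$, under diameter control. Your back-and-forth sketch is the right mechanism and is essentially a re-proof of Charatonik's result, so your route is correct in outline but does strictly more work; the paper's edge-based pieces buy a one-line reduction to a precise citation.

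Two soft spots remain in your version. First, the appeal to ``ultrahomogeneity of $D_n$ as a Fra\"iss\'e limit'' with reference to~\cite{Charatonik-Dilks} is not supported by that source, which characterises the dendrites $D_n$ and studies their homogeneity degree but contains no extension statement of the form ``every partial isomorphism of finite subtrees extends to a self-homeomorphism''; if you want that route you need a different reference, otherwise you must actually execute the back-and-forth. Second, your prescribed-root homeomorphism $\overline{c}\cong D_n$ (sending the attaching point to a chosen end) already presupposes transitivity of $\Homeo(D_n)$ on $\Ends(D_n)$, i.e.\ the case $k=1$ of the statement; this is not circular, but it should be anchored explicitly to the characterisation and homogeneity theorems of~\cite{Charatonik91} rather than left implicit.
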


Recall that oligomorphy~\cite{Cameron_oligo} means that for each $p\in\NN$ there are finitely many $\Homeo(D_n)$-orbits for the diagonal action on $\Ends(D_n)^p$.

\smallskip
It follows that there is a finite number of $p$-tuples in $\Ends(D_n)^p$ such that the union of their orbits is dense in $(D_n)^p$, because $\Ends(D_n)$ is dense in $D_n$. In particular, this constitutes a strong negation of the convergence action property of Bowditch's setting and certainly ruins any hope for a meaningful geometric interpretation of the  $\Homeo(D_n)$-action on $D_n$.

We will establish stronger statements in that direction in the forthcoming article~\cite{DM_dendrites_2}.

\begin{proof}[Proof of Proposition~\ref{prop:oligo}]
Given $x_1, \ldots, x_p \in\Ends(D_n)$, consider the compact tree $[\{x_1, \ldots, x_p \}]$ spanned in $D_n$. We claim that the combinatorial type of this tree is a complete invariant for the diagonal action of $\Homeo(D_n)$, which establishes the proposition.

Following~\cite{Charatonik91}, \cite{Charatonik95}, we denote for any distinct $x,y\in D_n$ by $D_n(x,y)$ the closure of the component of $D_n \setminus\{x,y\}$ containing the interior of the arc $[x,y]$. (This coincides with the closure of the set $O([x,y])$ in the notation of Proposition~\ref{prop:tectonic}.) Then $D_n(x,y)$ is homeomorphic to $D_n$ and this homeomorphism can be chosen so as to send $x,y$ to any pair of ends of $D_n$, see Proposition~4.1 in~\cite{Charatonik91}. Now the claim follows by decomposing $D_n$ into the different sub-dendrites $D_n(x,y)$ obtained from all adjacent nodes (including leaves) of the tree $[\{x_1, \ldots, x_p \}]$ and pasting the corresponding homeomorphisms together.

This is exactly the argument used in~\cite[Prop.~4.3]{Charatonik91} for the transitivity on distinct triples in $\Ends(D_n)$ (the case $n=3$ was previously established in~\cite{Kato87} and more general dendrites were treated in~\cite{Charatonik95}). We refer to~\cite{Charatonik91}, \cite{Charatonik95} for more details.
\end{proof}

A completely opposite case arises from compactifying simplicial trees. Consider a simplicial tree $T$ and denote by $\sT$ it geometric realization. Let $\partial \sT$ be its ideal boundary (in the CAT(0) sense~\cite[\S II.8]{Bridson-Haefliger}). There is a compact Hausdorff topology on $\overline \sT = \sT \sqcup \partial\sT$ with sub-basis given by the connected components of complements of points; see e.g.~\cite[\S1]{Monod-Shalom1}. When $T$ is locally finite, this is the usual cone topology, which is none other than Freudenthal's construction~\cite{Freudenthal31}, \cite{Freudenthal45}; but in general, $\partial \sT$ is not closed in $\overline \sT$.

\begin{prop}\label{prop:vegetale}
For any countable simplicial tree, $\overline \sT$ is a dendrite such that any two branch points are separated by at most finitely many branch points.

Conversely, suppose that $X$ is a dendrite satisfying this finiteness condition. Then there exists a canonical countable simplicial tree $T_X$ with a canonical homeomorphism $\overline{\sT_X}\cong X$.
\end{prop}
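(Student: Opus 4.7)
The plan splits naturally into the two directions of the equivalence.

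For Direction 1, I would verify that $\overline{\sT}$ satisfies the dendrite axioms. The stated sub-basis, consisting of components of $\overline{\sT}\setminus\{p\}$ as $p$ ranges over $\sT$, has only countably many elements (each of the countably many points of $\sT$ that matter has countably many complementary components), so $\overline{\sT}$ is second countable and, together with compactness and Hausdorff separation (any two points are separated by a vertex on the tree geodesic between them), Urysohn's theorem yields metrisability. Connectedness is inherited from the dense subspace $\sT$, local connectedness is immediate since every sub-basic open set is connected, and there is no simple closed curve because any arc in $\overline{\sT}$ corresponds to the unique simplicial geodesic of $T$, so a loop would force a repeated vertex. The branch points of $\overline{\sT}$ are exactly the vertices of $T$ of degree $\geq 3$, and two such are joined by a finite simplicial path, giving the claimed separation property.

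For Direction 2, I would build $T_X$ as the combinatorial skeleton of $X$. Set
$$V(T_X) := \Br(X)\ \cup\ \{e\in\Ends(X):\exists\, b\in\Br(X),\ (b,e)\cap\Br(X)=\varnothing\},$$
supplemented by the two extremities if $X$ is an arc and by a single vertex if $X$ is a point. An edge is an unordered pair $\{v,w\}\subseteq V(T_X)$ such that $(v,w)\cap V(T_X)=\varnothing$. Countability follows from countability of $\Br(X)$ and from the fact that each branch point has only countably many component directions, hence carries only countably many adjacent leaf-type ends. Connectedness of $T_X$ is where the finiteness hypothesis is used: any two branch points of $X$ are joined by a finite chain of branch points, hence by an edge path; leaf-type ends are adjacent to their associated branch point. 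Acyclicity is immediate from the unique-arc property of $X$ recalled in Section~\ref{sec:prelim}. Thus $T_X$ is a countable simplicial tree.

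To produce the homeomorphism, I would realise $\sT_X$ inside $X$ by sending each edge $\{v,w\}$ homeomorphically onto the arc $[v,w]\subseteq X$; these arcs glue along their vertex endpoints, yielding a continuous injection $\sT_X\hookrightarrow X$. Its image captures all branch points, all leaf-type ends, and, by following directions from a regular point until they meet a vertex, all regular points; the points missed are exactly the ``boundary-type'' ends $e\in\Ends(X)$ such that every branch point is separated from $e$ by a further branch point. For each such $e$ I would pick branch points $b_0,b_1,\ldots$ with $b_{n+1}\in(b_n,e)$ and $d(b_n,e)\to 0$, obtaining a geodesic ray in $T_X$; the extension $\overline{\sT_X}\to X$ sends the end of this ray to $e$. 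Well-definedness (independence of the choice of $b_n$) and bijectivity are routine verifications from the unique-arc property, together with Lemma~\ref{lem:KM} which shows that distinct rays limit to distinct ends. The principal obstacle is the continuity of this extension: one must check that for $\xi\in\partial\sT_X$ associated to $e\in X$, a sub-basic neighborhood of $\xi$, given by a component of $\overline{\sT_X}\setminus\{p\}$ for $p\in\sT_X$ on the ray, corresponds under our map to a component of $X\setminus\{x\}$ containing $e$ for an appropriate $x\in X$, and conversely; this matching is possible because the basis of neighborhoods of $e\in\Ends(X)$ provided by such components is nested (cf.\ the discussion in the proof of Theorem~\ref{thm:prox}) and the vertex-to-branch-point correspondence is bijective on both sides. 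Once continuity is established, the Hausdorff compactness of both spaces promotes our continuous bijection to a homeomorphism.
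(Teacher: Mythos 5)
Your construction follows the same route as the paper's own sketch: take as vertices the branch points together with those ends that are ``adjacent'' to the branch structure, join two vertices when no branch point separates them, realise the resulting tree edge-by-edge inside $X$, and extend over the ideal boundary. The first direction and the verification that $T_X$ is a countable simplicial tree are fine (modulo the usual details about the weak topology, which the paper also leaves implicit).

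The genuine gap is in the surjectivity of $\overline{\sT_X}\to X$, precisely at the step ``by following directions from a regular point until they meet a vertex, all regular points [are captured]''. The finiteness hypothesis does not prevent the branch points from accumulating at a \emph{regular} point, and beyond such a point your skeleton sees nothing. Concretely, let $X$ be the arc $[0,2]$ with a small arc attached at each point $b_n=1-1/n$ for $n\ge 2$. Any two branch points $b_i,b_j$ are separated by exactly $|i-j|-1$ branch points, so the hypothesis holds; yet the $b_n$ accumulate at the regular point $c=1$, no point of $(1,2]$ lies on an edge-arc $[v,w]$ of your $T_X$, and the end $2$ is not a limit of branch points, so no ray of vertices converges to it either: your map misses all of $(1,2]$, and there is no ``first vertex'' when one follows the direction from a point of $(1,2)$ towards $0$. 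In fact no $\overline{\sT}$ is homeomorphic to this $X$, since in $\overline{\sT}$ every regular point has a neighbourhood containing no branch points, whereas $c$ does not. So the defect is not only in your proof but in the statement itself (the paper's sketch has the same problem: on this example its graph is disconnected, the vertex $2$ being isolated). The converse needs the additional hypothesis that $\Br(X)$ accumulates only at branch points and end points --- equivalently, that every regular point lies in a free arc; with that assumption your case analysis becomes exhaustive and the rest of your argument, including the continuity check via nested component neighbourhoods, goes through.
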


\noindent
(Since we do not use Proposition~\ref{prop:vegetale}, we only sketch its straightforward proof below.)

\medskip

This correspondence is not quite bijective because the passage $T\mapsto \sT$ erases degree two vertices. Nonetheless, the canonical aspect of Proposition~\ref{prop:vegetale} highlights the more limited nature of the homeomorphism group of such dendrites, since it leads to a decomposition
$$\Homeo(X) \ \cong\  \Fix (V)\rtimes \Aut(T_X) $$
where $\Fix (V)$ denotes the fixator in $\Homeo(X)$ of the vertex set $V$ of $T_X$ (viewed as a subset of $X$).

\medskip
Without canonicality, a metrisation as above would have little interest. Recall that any dendrite can be metrised to become an $\RR$-tree. This can be deduced already from~\cite[\S9]{Menger28}; see also~\cite[\S12]{Kuratowski-Whyburn}, or~\cite[Thm.~4]{Moise49} and~\cite[Thm.~8]{Bing49} for the ultimate generalisation. Conversely, an $\RR$-tree can be equipped with a weak topology (or better, uniform structure) and compactified, becoming a dendrite~\cite{Coulbois-Hilion-Lustig} provided it was separable. This weak topology has also been called the \emph{observer's topology} in~\cite{Coulbois-Hilion-Lustig} and is the \emph{convex topology} $\mathcal{T}_c$ of~\cite{MonodCAT0}; cf.\ also~\cite[\S5.1]{Favre-Jonsson}.

This metrisation has no bearing on the study of the homeomorphism group unless there is at least some metric restriction on the dynamics, such as the \emph{non-nesting} condition of Bowditch --- which utterly lacks in the situation described in Proposition~\ref{prop:oligo}.

\begin{proof}[Sketch of proof for Proposition~\ref{prop:vegetale}]
Notice that the weak topology coincides with the ordinary topology on any arc in $\sT$. In particular, $\overline \sT$ is a continuum; criterion~\ref{pt:sep} of Section~\ref{sec:prelim} makes it easy to check that it is a dendrite. The finiteness condition is immediate.

Conversely, let $X$ be dendrite satisfying that finiteness condition. Consider those ends of $X$ that are not limits of branch points; there are at most countably many such ends. We define the vertex set $V$ of the tree $T_X$ as the union of this subset of ends with $\Br(X)$. We declare that a pair $v\neq v'$ in $V$ forms an edge if $v$ and $v'$ cannot be separated by a branch point. The resulting graph $T_X$ is connected thanks to the finiteness condition (and to the fact that $X$ is arcwise connected). This graph is acyclic because $X$ is a dendrite.

Finally, the inclusion map $V\to X$ can be extended on the edges to yield a map $\sT_X\to X$ which extends to a homeomorphism $\overline{\sT_X}\cong X$ for the weak topology.
\end{proof}

\bibliographystyle{halpha}
\bibliography{dendrite}
\end{document}